\documentclass[a4paper,12pt]{article}
\usepackage{graphicx} 
\usepackage{float}
\usepackage[dvipsnames]{xcolor}
\usepackage{subfigure}
\usepackage{amsmath}
\usepackage{amsfonts}
\usepackage{amssymb}
\usepackage{amsthm}
\usepackage{tikz}

\newtheorem{theorem}{Theorem}

\newtheorem{lemma}[theorem]{Lemma}

\newtheorem{definition}[theorem]{Definition}

\def\be{\color{black}}

\usepackage{color}   
\usepackage{hyperref}
\hypersetup{
    colorlinks=true, 
    linktoc=all,     
    linkcolor=black,  
}
\begin{document}{}
\title{Limiting Behavior of Randomly Perturbed Diffusions with Invariant Repelling Surfaces}
\author{
Leonid Koralov\footnote{Dept of Mathematics, University of Maryland,
College Park, MD 20742, koralov@umd.edu},
Chenglin Liu\footnote{Dept of Mathematics, University of Maryland, College Park,  MD 20742, clliu@umd.edu} 
}\date{February 6, 2026}\maketitle

\begin{abstract}
\
We study small perturbations of diffusion processes in $\mathbb{R}^d$
that leave invariant a finite collection of
hypersurfaces. Each surface is assumed to be repelling for the unperturbed process, and the unperturbed motion on each of the surfaces is assumed to be ergodic. These surfaces separate the space into a finite number of domains, each of which carries an invariant measure of the unperturbed process. We describe the asymptotics of the densities of the invariant measures near the invariant surfaces. We then describe the asymptotic behavior of the perturbed process: at different time scales (depending on the size of the perturbation), metastable distributions are described in terms of linear combinations of the ergodic invariant measures of the unperturbed system. The coefficients in the linear combination depend on the time scale but are shown not to depend on the perturbation. 
\end{abstract}

{2020 Mathematics Subject Classification Numbers: 60J60; 60F10; 82C31; 35B40.
}
\\

{ Keywords: Metastability, Degenerate Diffusion, Semi-Markov Processes. 
}

\author{}
\maketitle
\section{Introduction and main results}
\label{intro}
Consider a diffusion process $X_t$ that satisfies the stochastic differential equation
$$
d X_t = v_0(X_t)dt + \sum_{i=1}^d v_i(X_t) \circ dW_t^i,\quad X_0=x\in \mathbb{R}^d,
$$
where $W^i$ are independent Wiener processes and $v_0,v_1,..., v_d$ are $C^6(\mathbb{R}^d)$ vector fields. The Stratonovich form is convenient here since it allows one to provide a natural coordinate-independent description of the process.
The generator of $X_t$ is the operator
\begin{equation}
\label{degenerate operator}
L = L_0 + \frac{1}{2} \sum_{i=1}^d L_i^2,
\end{equation}
where $L_i u = \langle v_i,\nabla u\rangle$ is the operator of differentiation along the vector field $v_i,\, i=0,...,d$.
It should be kept in mind that the process depends on the initial point $x$, yet this is not
always reflected in the notation. We
will use the subscript $x$ to denote the probabilities and expectations associated with the
process starting at $x$.

Suppose that $S_1,...,S_m$ are $C^7$-smooth non-intersecting $(d-1)$-dimensional surfaces in $\mathbb{R}^d$ that are invariant for each of the vector fields $v_0,v_1,...,v_d$. They divide the space into $m+1$ components $D_1,...,D_{m+1}$, with the union $\bigcup_{i=1}^{m+1} D_i$ denoted by $D$. Each surface  $S_k$ serves as a part of the boundary for two domains, say $D_i$ and $D_j$. In this case, we will alternatively denote $S_k$ as $S_{ij}$ (two indices are used to denote the surface that serves as an interface between two specific domains). 

We assume that $X_t$, restricted to a single surface, is a non-degenerate process. The diffusion matrix for $X_t$ is assumed to be  non-degenerate in each domain $D_i$. More precisely, let $T_{k}(x)$ be the tangent space to $S_{k}$ at $x$. We assume that, for each $i,k$,

(a) $\text{span}(v_0(x),v_1(x),\cdots, v_d(x))=\text{span}(v_1(x),\cdots, v_d(x)) = T_{k}(x),~x \in S_k$,

(b) $\text{span} (v_1(x),\cdots, v_d(x))= \mathbb{R}^d,~x\in D_i.$

Thus, the generator of the  process degenerates on each of the surfaces but not elsewhere. 
We will need another assumption that states that the process degenerates at the surfaces in a regular manner. Namely, by (\ref{degenerate operator}), the operator $L$ can be written as $Lu = \sum_{i,j=1}^d a^{ij}(x)\partial_{x_i x_j}u + \sum_{i=1}^d b^i(x) \partial_{x_i} u$ with $(x_1,...,x_d)\in \mathbb{R}^d$. Let $A(x):= (a^{ij}(x))_{ij}$. We make the following assumption.

(c) There is $c>0$ such that $\xi^T A(x) \xi \geq c\,{\rm min}\{{\rm dist}(x, S)^2, 1\}\,|\xi|^2,\, x\in \mathbb{R}^d,\, \xi \in \mathbb{R}^d$.

 In the next section, we will see that this assumption is an alternative but more convenient formulation of Assumption (e) in \cite{Metastability}. It can be seen to hold for a generic collection of vector fields satisfying Assumptions (a) and (b).

Under
Assumption (a), the set $S_1 \bigcup ... \bigcup S_m$ is inaccessible, i.e., the process $X_t$ starting at
$x \in D$ does not reach it in finite time.
It was shown in \cite{Metastability} that each surface
can be classified as either attracting or repelling for $X_t$, depending (roughly speaking) on
whether $\mathrm{P}_x (\lim_{t \rightarrow \infty} \mathrm{dist}(X_t, S_k) = 0) > 0$ for each $x$ sufficiently close to $S_k$ (attracting
surface) or $\mathrm{P}_x (\lim_{t \rightarrow \infty} \mathrm{ dist}(X_t, S_k) = 0) = 0$ for each $x \notin S_k$ (repelling surface). Here, we will primarily concentrate on the situation where all the surfaces are repelling. 

It was shown in \cite{Metastability} that a real number $\gamma_k$ can be associated to each of the invariant surfaces, which characterizes the transition times and transition probabilities near the surfaces.
For example, if $S_k$ is repelling, $\gamma_k < 0$, and $U_k$ is a sufficiently small neighborhood of $S_k$, then
the probability that the process $X_t$ starting at a fixed point $x \in U_k \setminus S_k$
reaches the $r$-neighborhood of $S_k$ before exiting $U_k$ is of order $r^{-\gamma_{k}}$ when $r\downarrow 0$.

The precise way to determine $\gamma_k$ will be discussed in Section \ref{procbeh}; here we mention that $\gamma_k > 0$ implies that the surface is attracting, and $\gamma_k < 0$ implies that the surface is repelling (we do not consider $\gamma_k = 0$ for simplicity).  

Next, we perturb the process $X_t$ by a small non-degenerate diffusion. The resulting process $X_t^\varepsilon$ satisfies stochastic differential equation
$$
dX_t^\varepsilon = (v_0 + \varepsilon^2 \widetilde v_0) (X_t^\varepsilon) dt + \sum_{i=1}^d v_i(X_t^\varepsilon)\circ dW_t^i + \varepsilon \sum_{i=1}^d \widetilde v_i (X_t^\varepsilon) \circ d\widetilde W_t^i
$$
with $\widetilde W_t^i$ independent Wiener processes (also independent with $W_t^i$) and $\widetilde v_0,\widetilde v_1,\cdots, \widetilde v_d$ bounded $C^5(\mathbb{R}^d)$ vector fields. The precise assumption on non-degeneracy of $\widetilde L$ is stated as follows:

(d) $\text{span}(\widetilde v_1, \cdots, \widetilde v_d)= \mathbb{R}^d~\mathrm{ for}~ x\in \mathbb{R}^d$.

We also include an assumption that ensures that
$X_t$ and $X_t^\varepsilon$ are positive recurrent:

(e)  $v_1,...,v_d, \widetilde v_0,..., \widetilde v_d$ are bounded and $\langle v_0(x),x\rangle < - c \left\| x\right\|$ for some $c>0$ and for all sufficiently large $\left\|x\right\|$.

The generator of the process $X_t^\varepsilon$ is the operator $L^\varepsilon = L + \varepsilon^2 \widetilde L$ with
$$
\widetilde L  = \widetilde L_0 + \frac{1}{2} \sum_{i=1}^d \widetilde L_i^2.
$$
Here, $\widetilde{L}_i u = \langle \widetilde v_i, \nabla u\rangle$ is the operator of differentiation along the vector field $\widetilde v_i$, $i=0,...,d$.

At finite time scales, 
the trajectories of the process $X^\varepsilon_t$ are close to those of $X_t$.
However, when $t$ is large, i.e., $t = t(\varepsilon) \rightarrow \infty$ as $\varepsilon \downarrow 0$,
the invariant surfaces start playing an important role. Namely, $X_t$ may get forever trapped in a vicinity of an attracting surface, while $X^\varepsilon_t$ will escape from such a neighborhood after a long ($\varepsilon$-dependent) time due to the presence of small non-degenerate diffusion. On the other hand, $X_t$ cannot cross a repelling hypersurface, while $X^\varepsilon_t$ can cross, again due to the small non-degenerate term.

In \cite{Metastability}, transitions between attracting surfaces (in the absence of repelling barriers) were considered. Metastable behavior was described: the limiting distribution of $X^\varepsilon_{t(\varepsilon)}$ was concentrated on a subset of the attracting surfaces, depending on the asymptotics of $t(\varepsilon)$. In the current paper, we examine the situation in which all the invariant surfaces are repelling. In this case, the metastable distributions will be described. Each metastable distribution will be shown to be equal to a linear combination of invariant measures for the unperturbed process in the domains $D_i$, with the coefficients in this linear combination dependent on the time scale. 

Let $\mu_i$ denote the invariant probability measure for the unperturbed process $X_t$ in the domain $D_i$, $1 \leq i \leq m+1$. The existence and uniqueness of such a measure in a bounded domain $D_i$ have been demonstrated in \cite{freidlin2023perturbations}. When $D_i$ is unbounded, the existence and uniqueness follow from a similar argument as in \cite{freidlin2023perturbations} and Assumption (e). In Section \ref{invmeas} (see Theorem \ref{invariant measure multi-surfaces}), we provide the explicit asymptotics of the density of $\mu_i$ in a neighborhood of the boundary of $D_i$.

Without loss of generality, let us re-label the surfaces in such a way that their characteristic numbers $\gamma_k$ are listed in the decreasing order: $\gamma_m \leq \gamma_{m-1} 
 \leq ... \leq \gamma_1 < 0$. Let us also put $\gamma_0 = 0$ and $\gamma_{m+1} = -\infty$.
 We will describe the behavior of $X^\varepsilon_{t(\varepsilon)}$ assuming that $t(\varepsilon)$ satisfies 
 $\varepsilon^{\gamma_{k-1}} \ll t(\varepsilon) \ll \varepsilon^{\gamma_{k}}$, $1 \leq k \leq m+1$.
 Here, the relation $f(\varepsilon) \ll g(\varepsilon)$ between two positive functions means that $f(\varepsilon)/g(\varepsilon) \rightarrow 0$ as $\varepsilon \downarrow 0$. The notation $f(\varepsilon)\sim g(\varepsilon)$ for two positive functions means that $f(\varepsilon)/g(\varepsilon)=1$ as $\varepsilon\downarrow 0$. The main result can be stated as follows.
\begin{theorem}
\label{main theorem}
Suppose that Assumptions (a)-(e) hold and all surfaces $S_k$, $1 \leq k \leq m$, are repelling (with $\gamma_k < 0$). There exist non-negative constants $c_j(i,k)$ that are determined by the coefficients of the operator $L$ (i.e., by the vector fields $v_0, v_1,...,v_d$) but do not depend on the perturbation $\widetilde{L}$ (i.e., on the vector fields $\widetilde{v}_0, \widetilde{v}_1,..., \widetilde{v}_d$) such that
\[
\mathrm{Law}_x(X^\varepsilon_{t(\varepsilon)}) \rightarrow \sum_{j =1}^{m+1} c_j(i,k) \mu_j,~~{as}~\varepsilon \downarrow 0,
\]
uniformly for all $x$ in a compact set in $D_i$, for all $\varepsilon^{\gamma_{k-1}}\ll t(\varepsilon)\ll \varepsilon^{\gamma_{k}}$ where $i, k\in \{1,...,m+1\}$.

The limit of the invariant measure for the process $X^\varepsilon_t$ coincides with the metastable distribution at the largest time scale, i.e., the invariant measure for $X^\varepsilon_t$ converges to $\sum_{j =1}^{m+1} c_j \mu_j$ as $\varepsilon \downarrow 0$, where the coefficients $c_j = c_j(i,m+1) > 0$ do not depend on $i$. 
\end{theorem}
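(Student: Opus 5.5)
The plan is to reduce the dynamics of $X^\varepsilon_t$ to a finite-state, time-inhomogeneous jump structure on the domains $D_1,\dots,D_{m+1}$. The rates come from the crossing probabilities of the repelling surfaces, and the states are then \emph{filled in} with the invariant measures $\mu_j$.

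\textbf{Step 1: relaxation inside a domain.} On each $D_i$, I will show that on time scales $1\ll t(\varepsilon)$ the process $X^\varepsilon$ started at a compact $K\subset D_i$ has law close to $\mu_i$, as long as it has not crossed a surface. This uses the ergodicity of the unperturbed process in $D_i$ (existence and uniqueness of $\mu_i$, with positive recurrence from Assumption (e)). It also uses the fact that on bounded time intervals $X^\varepsilon$ is close to $X$. Near a repelling surface, the unperturbed process is pushed away, so excursions towards $S_k$ are short and rare.

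\textbf{Step 2: crossing rates.} For each surface $S_k=S_{ij}$ I will compute the asymptotic rate at which the process, in its quasi-stationary regime $\mu_i$ in $D_i$, crosses $S_k$ into $D_j$. This rate should be $\lambda_{ij}\,\varepsilon^{-\gamma_k}(1+o(1))$ with $\lambda_{ij}>0$. The heuristic runs as follows.
\begin{itemize}
\item The perturbation becomes comparable to $L$ in the layer where $\mathrm{dist}(x,S_k)\sim\varepsilon$, by Assumption (c): the diffusion of $L$ in the normal direction is of order $\mathrm{dist}^2$, while that of $\varepsilon^2\widetilde L$ is of order $\varepsilon^2$.
\item The probability of reaching the $\varepsilon$-layer before leaving a fixed neighborhood is of order $\varepsilon^{-\gamma_k}$, by the result of \cite{Metastability} quoted in the introduction.
\item From the $\varepsilon$-layer, the process exits to either side with probabilities bounded away from zero.
\end{itemize}

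The crucial point is that the constants $\lambda_{ij}$ must not depend on $\widetilde L$. To get this, I would express the crossing flux through the boundary asymptotics of the density of $\mu_i$ (Theorem \ref{invariant measure multi-surfaces}), which behaves like $\mathrm{dist}^{-\gamma_k-1}$ times a profile on $S_k$ determined by the ergodic measure of $X$ on $S_k$. I would then rescale $y=\mathrm{dist}/\varepsilon$ in the normal coordinate, where the limiting one-dimensional problem is scale-invariant. In that problem, the splitting probability between the two sides is fixed by the exponents of $L$ on either side of $S_k$. The perturbation only affects a bounded rescaled region, and its contribution to the split should vanish in the limit, or be computed by a symmetry/flux-balance argument. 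I expect this step to be the main obstacle: proving that the prefactor, and in particular the side-selection probability, is independent of $\widetilde v$. My first attempt would be a flux identity. In the stationary regime, the mass flux $\varepsilon^{-\gamma_k}$ across $S_k$ in each direction is determined by the outer densities, whose coefficients are intrinsic to $L$. The inner region then only has to transmit this flux, which is guaranteed by matched asymptotics.

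\textbf{Step 3: the reduced chain and the limit.} With these rates, the coarse-grained process is a Markov chain on $\{1,\dots,m+1\}$ with rates $\lambda_{ij}\varepsilon^{-\gamma_k}$, and hence with well-separated time scales $\varepsilon^{\gamma_1}\ll\varepsilon^{\gamma_2}\ll\cdots$. I would justify the Markov reduction via a semi-Markov/regeneration argument with Markov times at which the process returns to compacts in the domains, so that successive sojourns are nearly independent. In the regime $\varepsilon^{\gamma_{k-1}}\ll t\ll\varepsilon^{\gamma_k}$:
\begin{itemize}
\item surfaces $S_1,\dots,S_{k-1}$ have been crossed many times, so the components they connect are equilibrated;
\item surfaces $S_k,\dots,S_m$ have not yet been crossed.
\end{itemize}
Thus the limit is the stationary distribution of the chain restricted to the cluster containing $i$ in the graph with edges $S_1,\dots,S_{k-1}$, with weights $c_j(i,k)$ and $c_j=0$ outside the cluster. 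Weights within a cluster are determined by detailed balance across the tree of domains (each surface separates $\mathbb{R}^d$, so the adjacency graph is a tree). Combined with Step 1, this gives $\sum_j c_j(i,k)\mu_j$, uniformly on compacts, by the uniformity of the estimates.

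For the invariant measure, I would use tightness from Assumption (e) and the fact that any limit point is invariant for $X$ on $D$. Hence it is $\sum_j c_j\mu_j$, with the weights pinned down by the flux balance across every surface. These are exactly the $k=m+1$ coefficients, which are positive and independent of $i$ because the full tree is connected.
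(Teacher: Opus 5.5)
There is a genuine gap in Step 2. Your overall architecture matches the paper's: relaxation to $\mu_i$ inside each domain, crossing rates of order $\varepsilon^{-\gamma_k}$, and a reduced chain on the tree of domains with detailed balance. But Step 2 sets out to prove something false, and that is exactly where independence from $\widetilde L$ has to be earned.

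The individual rates $\lambda_{ij}$ \emph{do} depend on $\widetilde L$; only the weights $c_j(i,k)$ are claimed to be independent of it. Already for the normal part $(\alpha z^2+\varepsilon^2\tilde a)\partial_z^2+\beta z\partial_z$ with constant coefficients, the substitution $z=\varepsilon\sqrt{\tilde a/\alpha}\,w$ turns the equation into $\alpha(w^2+1)u_{ww}+\beta w u_w=0$, which is free of $\varepsilon$ and $\tilde a$. Hence, for $\varepsilon\ll\zeta\ll 1$, the probability of reaching $S$ from distance $\zeta$ before a fixed level is $\rho_0(\tilde a/\alpha)^{-\gamma/2}(\zeta/\varepsilon)^{\gamma}(1+o(1))$. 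This is the constant $\rho_k$ of Lemma~\ref{transition probability to surface} and Theorem~\ref{proportion exit probability}; it multiplies the crossing rates and enters the exit-time prefactors.

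So the perturbation does not merely act on a bounded rescaled region whose contribution vanishes. The inner problem at $\mathrm{dist}\sim\varepsilon$ is not scale invariant, and its solution fixes the transmission coefficient. For the same reason your flux identity cannot work. The boundary asymptotics of $\mu_i$ only fix the rate of arrivals at $\Gamma^k_\varkappa$, which is of order $\varkappa^{-\gamma_k}$. The fraction of those arrivals that actually reach $S_k$ is $\rho_k(\varkappa/\varepsilon)^{\gamma_k}$. Also, the split at the surface is not set by different exponents on the two sides. $\gamma_k$, $\alpha$, $\beta$, $\mathcal{D}_y$ belong to the surface, and the split tends to $1/2$ by symmetry of the leading terms (Lemma~\ref{equiprobability transition lemma}).

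The missing idea is a cancellation. For $S_k=S_{ij}$, write $\lambda_{ij}=\frac12\rho_k\kappa_{ij}$.
\begin{itemize}
\item Here $\kappa_{ij}\varkappa^{-\gamma_k}$ is the asymptotic stationary rate, under $\mu_i$, of arrivals at $\Gamma^k_\varkappa$ from the bulk of $D_i$. It is determined by $L$ in $D_i$ through Theorem~\ref{invariant measure multi-surfaces} and Khasminskii's formula, as in the proof of Theorem~\ref{proportion exit probability}.
\item The $\widetilde L$-dependent factor $\rho_k$ is the same from both sides of $S_k$ (Remark after Theorem~\ref{proportion exit probability}). It depends only on leading-order coefficients of $L$ and $\widetilde L$ at $S_k$, which do not depend on the side.
\end{itemize}
Hence $\lambda_{ij}/\lambda_{ji}=\kappa_{ij}/\kappa_{ji}$ is intrinsic to $L$. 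Your detailed balance on the tree then gives stationary weights with $w_i/w_j=\kappa_{ji}/\kappa_{ij}$, independent of $\widetilde L$ and even of $\varepsilon$. The $\varepsilon$-independence is also why cluster weights are restrictions of one global vector. This is how the paper concludes: it compares the rates in (\ref{ratio of exit time to probability for compact sets}) and uses the reversibility (\ref{reversible markov chain for compact sets}).

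With this repair, the rest of your outline is essentially the paper's route. The paper makes the Markov reduction rigorous via exponential sojourn times that do not depend on the exit surface, together with the semi-Markov results of \cite{koralov2024metastable}.

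In the invariant-measure part, note that a limit point of $\mu^\varepsilon$ is invariant for $X$ on all of $\mathbb{R}^d$. $X$ also has invariant measures supported on the surfaces, so you must rule out mass accumulating on $\bigcup_k S_k$. This needs the estimates on time spent near the surfaces, and the weights must then be fixed by the corrected detailed balance. The paper avoids this by deducing the limit from the largest-time-scale result, uniform in the starting point, together with $\mathrm{Law}_x X^\varepsilon_t\to\mu^\varepsilon$ as $t\to\infty$.
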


The paper is organized as follows. In Section \ref{procbeh}, we introduce the expansion of the operator $L$ and the definition of constants $\gamma_k$ associated to the surface $S_k$. We also collect the  necessary estimates for the transition behavior of the processes $X_t, X_t^\varepsilon$, most of which have been established in \cite{freidlin2023perturbations}, \cite{Metastability}. In Section~\ref{invmeas}, we prove some essential properties of the invariant measures $\mu_j$ generated by the process $X_t$ in $D_j$. (Such results may be of independent interest (see \cite{bakhtin2025random} where related questions were discussed)). In Section~\ref{Probability and expectation time for transition in one domain}, we provide precise estimates on the transition probabilities and transition times for $X_t^\varepsilon$ to hit the surfaces. In Section~\ref{Exit from surfaces}, we describe how $X_t^\varepsilon$, starting on some surface $S_k$, escapes a neighborhood of $S_k$. In Section \ref{vague version main result} and \ref{Appendix}, we prove Theorem \ref{main theorem} by identifying the constants $c_j(i,k)$ in terms of a limiting invariant measure for a semi-Markov process and showing that $c_j(i,k)$ can be determined in terms of the coefficients of $L$.

We now outline how the current work is related to other research. The metastable behavior in the present model can be compared to that arising in small random perturbations of dynamical systems. Freidlin–Wentzell theory (see  \cite{freidlin2012random}) was among the first rigorous results for metastability in stochastic systems. 
It relies on large deviations principle that gives the asymptotics of transition times and transition probabilities between the vicinities of different equilibria. Such results were refined in some cases: for example, for random perturbations that result in reversible diffusion processes, precise asymptotics of the expected transition times (the Eyring-Kramers formula) have been established in \cite{sharpexittimeestimate},\cite{asymptoticexittime}. Moreover, for a more general class of randomly perturbed dynamical systems, metastability  was recently studied in \cite{landim2024metastability}, \cite{landim2025metastabilitytimescalesparabolic}. Metastability for randomly perturbed stochastic systems with invariant surfaces (some of which are attracting) has recently been studied in \cite{freidlin2023perturbations}, \cite{Metastability}. The latter papers are different from the current work is that, in the absence of attracting surfaces, the invariant measures of the unperturbed system (which determine the metastable distributions of the perturbed one) are now concentrated on sets of full dimension, requiring different analysis of transition times and probabilities. It should be stressed that in the Freidlin-Wentzell theory, the transition times scale exponentially in the size of the perturbation. In the current work (as well as in \cite{freidlin2023perturbations}, \cite{Metastability}), they scale as powers of $\varepsilon$. This is not surprising: the fact that the unperturbed system is stochastic means that the main contribution to the transition times comes only from the 
time spent in the vicinity of the surfaces of degeneration (in contrast to the Freidlin-Wentzell case, where large deviations are needed to overcome the deterministic drift).

The general study of metastability, as well as the analysis of some specific models, can be found in \cite{berglund2025reducing}, \cite{bovier2016metastability},   \cite{olivieri2005metastability}, \cite{landim2016metastablemarkovchain}, \cite{betz2016multiscale} (this is not an exhaustive list). In  Section~\ref{vague version main result} of this paper,  we make use of abstract results  from  
\cite{koralov2024metastable} on metastability for semi-Markov processes.

Degenerate diffusion systems arise naturally in a variety of applications, for example, continuous-time measurement in quantum mechanics (\cite{barchielli2009quantum}, \cite{attractingquantum}, \cite{chalal2023meanfield}), population biology (\cite{belabbas2021preypredator}), and scalable reaction networks (see \cite{lin2020network} and the references therein). Degenerate diffusions with repelling surfaces arise in the description of near-wall particle motion in viscous fluids (\cite{hydrodynamic}).

\section{Behavior of the processes \texorpdfstring{$X_t$}{TEXT} and \texorpdfstring{$X^\varepsilon_t$}{TEXT} and of the corresponding operators near \texorpdfstring{$S_k$}{TEXT}}
\label{procbeh}
The behavior of the processes near the invariant surfaces has been studied in \cite{freidlin2023perturbations} and \cite{Metastability}.
We need to gather these facts in order to describe the critical exponents $\gamma_k$ and in order to have the needed tools to study the asymptotics of the transition times and transition probabilities. In this section,  we largely follow \cite{freidlin2023perturbations} and \cite{Metastability}.

Let us fix $S = S_k$, $1 \leq k \leq m$, and $D = D_i$, $1 \leq i \leq m+1$, such that $S \subseteq \partial D$. We will study the behavior of the processes in a small neighborhood of $S$ in $D$. Each point $x$ in a sufficiently small such neighborhood can be uniquely identified with the pair $(y,z)$, where $y$ is the point on the surface that is closest to $x$ and $z$ is the distance of $x$ from the surface. Thus, for sufficiently small $\delta > 0$, there is a bijection between a small neighborhood of $S$ in $D$ and the set $S^\delta = S \times [0,\delta)$. The operator $L$ (which is the generator of $X_t$) can be considered in the $(y,z)$ coordinates. Let $L_y$ be the restriction of the operator $L$ to functions that are defined on $S$. This operator can be applied to functions of $(y,z)$ by treating $z$ as fixed.
 
 The following two lemmas describe the expression for the operator $L$  around $S$ and the definition of the characteristic number $\gamma = \gamma_k$ corresponding to $S = S_k$.
\begin{lemma} \label{loccoeff} For sufficiently small $\delta >0$,
the operator $L$ in $S^\delta$ can be written as:
$$
L u = L_y u + z^2 \alpha(y)\frac{\partial^2 u}{\partial z^2} + z\beta(y)\frac{\partial u }{\partial z} + z \mathcal{D}_y \frac{\partial u}{\partial z} + Ru
$$
with $R u = z \mathcal{K}_y u + z^2 \mathcal{N}_y \frac{\partial u}{\partial z} + z^3 \sigma(y,z)\frac{\partial^2 u}{\partial z^2}$,
where $\mathcal{D}_y$ is a first-order differential operator in $y$ (without a zero-order term), whose coefficients depend only on the $y$ variables, $\mathcal{K}_y$ is a differential operator on $S^\delta$ with
first- and second-order derivatives in $y$, $\mathcal{N}_y$ is a differential operator on $S^\delta$ with first-order derivatives in $y$ and a potential term. All the operators have $C^4$ coefficients, while $\alpha,\beta\in C^4(S), \, \sigma\in C^4(S^\delta)$.
\end{lemma}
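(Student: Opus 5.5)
We work in the tubular coordinates $(y,z)$ on $S^\delta$ and expand each vector field $v_i$, $i=0,\dots,d$, in powers of $z$. Write $v_i = v_i^{y}(y,z)\,\partial_y + v_i^{z}(y,z)\,\partial_z$, where $v_i^{y}$ is the tangential part (a vector field on $S$ depending on the parameter $z$) and $v_i^{z}$ is the normal component. Since $S$ is invariant for every $v_i$, we have $v_i^{z}(y,0)=0$. Because the $v_i$ are $C^6$ and the coordinates are $C^6$ (the surface is $C^7$, so the normal projection loses one derivative), Taylor's formula with integral remainder gives
\[
v_i^{z}(y,z) = z\, b_i(y) + z^2 r_i(y,z).
\]
Here $b_i(y)=\partial_z v_i^{z}(y,0)$ and $r_i$ is $C^4$. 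Similarly, $v_i^{y}(y,z) = v_i^{y}(y,0) + z\, w_i(y,z)$, and $v_i^{y}(y,0)$ is exactly the restriction of $v_i$ to $S$.

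Next we expand $L = L_0 + \tfrac12\sum_i L_i^2$, with $L_i = v_i^{y}\partial_y + v_i^{z}\partial_z$. The square $L_i^2 u$ consists of four pieces: the tangential square $(v_i^y\partial_y)^2u$, the normal square $v_i^z\partial_z(v_i^z\partial_z u)$, and the two cross terms. We sort the resulting terms by their power of $z$.

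\begin{itemize}
\item The tangential part at $z=0$, together with $v_0^y(y,0)\partial_y$, gives exactly $L_y$. The $z$-corrections $z w_i$ contribute terms $z\times$(first- and second-order operators in $y$); these go into $z\mathcal{K}_y$.
\item The normal square equals $(v_i^z)^2\partial_z^2 u + v_i^z(\partial_z v_i^z)\partial_z u$. Using the expansion of $v_i^z$, it becomes $z^2 b_i^2\partial_z^2 u + z b_i^2 \partial_z u + O(z^2)\partial_z u + O(z^3)\partial_z^2 u$. Hence $\alpha = \tfrac12\sum_i b_i^2$. The $O(z^3)$ coefficient defines $\sigma$.
\item The cross terms are $v_i^y\partial_y(v_i^z\partial_z u) + v_i^z\partial_z(v_i^y\partial_y u)$. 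The first gives $z\, v_i^y(y,0)(b_i)\partial_z u + z b_i\, v_i^y\partial_y\partial_z u + O(z^2)$. The second gives $z b_i\, v_i^y\partial_z\partial_y u + z b_i (\partial_z v_i^y)\partial_y u$.
\item The drift $L_0$ contributes $z b_0\partial_z u$ plus higher-order terms.
\end{itemize}

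We now collect the order-$z$ terms. The $\partial_z u$ terms with $y$-dependent coefficients define
\[
\beta = b_0 + \tfrac12\sum_i\big(b_i^2 + v_i^y(y,0)b_i\big).
\]
The mixed terms $z b_i v_i^y(y,0)\partial_y\partial_z u$ define $z\mathcal{D}_y\partial_z u$ with $\mathcal{D}_y = \sum_i b_i(y)\, v_i^y(y,0)\cdot\partial_y$. This is first order in $y$, has no zeroth-order term, and its coefficients depend only on $y$. The term $z b_i(\partial_z v_i^y)\partial_y u$ involves no $z$-derivative, so it goes into $z\mathcal{K}_y u$.

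All remaining terms fall into the remainder $R$:
\begin{itemize}
\item terms of the form $z^2\times$(first order in $y$ or a potential)$\,\partial_z u$ go into $z^2\mathcal{N}_y\partial_z u$;
\item $z^3\sigma\,\partial_z^2 u$ is the leftover from the normal square;
\item terms with only $y$-derivatives and a factor $z$ go into $z\mathcal{K}_y u$.
\end{itemize}
One checks that no term $\partial_z^2 u$ appears with a coefficient of order lower than $z^2$. This holds because every $\partial_z$ comes with a factor $v_i^z = O(z)$.

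The main bookkeeping obstacle is regularity and the exact placement of the remainder terms. Mixed terms $z^2\,\partial_y\partial_z u$ come from the $O(z)$ parts of $v_i^y$ multiplying $z b_i$. These must be absorbed into $z^2\mathcal{N}_y\partial_z$, which requires $\mathcal{N}_y$ to be allowed first-order $y$-derivatives with coefficients depending on $(y,z)$, as the statement permits. As for regularity, the coefficients of $R$ involve at most one $z$-derivative beyond the Taylor remainder. Using the integral form of the remainder for $C^6$ fields in $C^6$ coordinates, all coefficients are at least $C^4$, and $\alpha,\beta\in C^4(S)$ since they involve one derivative of $C^5$ data. Finally, $\delta$ is chosen small enough that the normal-coordinate map is a diffeomorphism.
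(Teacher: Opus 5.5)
Your proposal is correct and follows the same route as the paper, which defers to \cite{freidlin2023perturbations}: Taylor-expand the $v_i$ in the coordinates $(y,z)$ near $S$ and sort the terms of $L_0+\frac12\sum_i L_i^2$ by powers of $z$; your identifications $\alpha=\frac12\sum_i b_i^2$, $\beta=b_0+\frac12\sum_i\bigl(b_i^2+v_i^y(y,0)b_i\bigr)$ and $\mathcal{D}_y=\sum_i b_i\,v_i^y(y,0)\cdot\partial_y$ are right. The one soft spot is the regularity bookkeeping: components of a $C^6$ field in $C^6$ coordinates are in general only $C^5$, so to get $r_i,\sigma,\beta\in C^4$ you should use that $z$ is the distance to a $C^7$ surface, hence a $C^7$ function, which makes $v_i^z=\langle v_i,\nabla z\rangle$ a $C^6$ function of $(y,z)$.
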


It is easy to see, using the smoothness of the vector fields $v_i$ or from the proof of Lemma~\ref{loccoeff} in \cite{freidlin2023perturbations}, that the functions $\alpha$ and $\beta$ and the coefficients of the operator $\mathcal{D}_y$ do not depend on the choice of the side of the surface $S$ ($S^\delta$ was defined by the pair $S$ and $D$).
This lemma was proved in \cite{freidlin2023perturbations} by considering the leading terms in the Taylor expansion of each of the vector fields $v_i$ around the surface $S$.

It follows from Assumption (a) that the process $X_t$ has a unique invariant probability  measure $\pi$ on $S$. Define
$$\alpha^{avg}= \int_S \alpha d\pi,~~\beta^{avg} =  \int_S \beta d\pi.$$
We have the following result.
\begin{lemma} \label{lemmaspectral}
If $\alpha^{avg} < \beta^{avg}$ ($\alpha^{avg} > \beta^{avg}$), then there exists $\gamma<0$ ($\gamma>0$) and a positive-valued $\varphi\in C^2(S)$ satisfying $\int_S \varphi d\pi =1$ such that
\begin{equation} \label{elliptic1}
L_y\varphi + \alpha(\gamma-1)\gamma\varphi+ \beta \gamma\varphi + \gamma \mathcal{D}_y \varphi = 0,
\end{equation}
and such $\gamma$ and $\varphi$ are determined uniquely.
\end{lemma}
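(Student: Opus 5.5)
This is a principal-eigenvalue problem in the parameter $\gamma$. For each real $\gamma$ define the operator on $C^2(S)$
$$
M_\gamma \varphi = L_y\varphi + \gamma \mathcal{D}_y\varphi + \big(\alpha(\gamma-1)\gamma + \beta\gamma\big)\varphi .
$$
By Assumption (a), $L_y$ is a non-degenerate elliptic operator on the compact manifold $S$. Hence $L_y+\gamma\mathcal{D}_y$ is also elliptic, with the same second-order part, and $M_\gamma$ is elliptic with a potential term. By Krein--Rutman (or Perron--Frobenius for the positive compact resolvent semigroup on $C(S)$), $M_\gamma$ has a principal eigenvalue $\lambda(\gamma)\in\mathbb{R}$. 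It is simple, has a positive eigenfunction $\varphi_\gamma$ unique up to scaling, and is the only eigenvalue with a positive eigenfunction. Equation (\ref{elliptic1}) with positive $\varphi$ is therefore equivalent to $\lambda(\gamma)=0$, and the normalization $\int_S\varphi\,d\pi=1$ then fixes $\varphi$ uniquely. The task reduces to showing that $\lambda(\gamma)=0$ has exactly one nonzero root, of the stated sign.

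\textbf{Properties of $\lambda$.} First, $\lambda(0)=0$, since $M_0=L_y$ kills constants. Second, $\lambda$ is convex in $\gamma$. I would use the Donsker--Varadhan/Feynman--Kac representation: $\lambda(\gamma)=\lim_{t\to\infty}t^{-1}\log \mathrm{E}_y\exp(\int_0^t V_\gamma(Y_s)ds)$, where $Y$ is the diffusion generated by $L_y+\gamma\mathcal{D}_y$. That drift also depends on $\gamma$, so instead I would realize the combination as $\lambda(\gamma)=\lim t^{-1}\log \mathrm{E}\, (Z_t)^\gamma$, where $Z_t$ is the distance coordinate of the linearized process $(y,z)$ generated by $L_y+z^2\alpha\partial_z^2+z\beta\partial_z+z\mathcal{D}_y\partial_z$. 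Applying this operator to $z^\gamma\varphi(y)$ gives exactly $z^\gamma M_\gamma\varphi$. Then $\log Z_t$ is an additive functional, $\gamma\mapsto\log \mathrm{E} e^{\gamma \log Z_t}$ is convex by H\"older, and convexity passes to the limit. Third, $\lambda$ is analytic/differentiable, by simplicity of the eigenvalue and perturbation theory. Its derivative at $0$ follows by differentiating $M_\gamma\varphi_\gamma=\lambda(\gamma)\varphi_\gamma$ at $\gamma=0$ with $\varphi_0=1$ and integrating against $\pi$. Since $\int L_y\psi\,d\pi=0$ and $\mathcal{D}_y1=0$, this gives $\lambda'(0)=\int(\beta-\alpha)\,d\pi=\beta^{avg}-\alpha^{avg}$.

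\textbf{Growth at infinity.} Next I would show $\lambda(\gamma)\to+\infty$ as $|\gamma|\to\infty$. Take the test function $\varphi\equiv1$ and a variational lower bound. For the principal eigenvalue, $\lambda(\gamma)\ge \int_S (M_\gamma 1)\,d\nu_\gamma$ for a suitable probability measure; more simply, $\lambda(\gamma)\ge\min_S(\text{potential})$ is false in general, so use instead $\lambda(\gamma)\ge \int V_\gamma\, d\pi_\gamma$, with $\pi_\gamma$ the invariant measure of $L_y+\gamma\mathcal{D}_y$. This follows from Jensen in the Feynman--Kac formula. Then $V_\gamma=\alpha\gamma^2+(\beta-\alpha)\gamma$. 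Assumption (c) gives $\alpha>0$ on $S$: the normal component of the diffusion is of order $z^2\alpha$ and must be $\ge cz^2$. So $V_\gamma\ge c\gamma^2-C|\gamma|\to\infty$.

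\textbf{Conclusion and main obstacle.} A convex function with $\lambda(0)=0$, $\lambda'(0)\neq0$, and $\lambda\to+\infty$ at $\pm\infty$ has exactly one other zero. That zero is negative when $\lambda'(0)>0$, i.e.\ $\alpha^{avg}<\beta^{avg}$, and positive otherwise. This gives existence and uniqueness of $\gamma$; regularity $\varphi\in C^2$ comes from elliptic regularity with $C^4$ coefficients. I expect the main obstacle to be convexity of $\lambda$, because of the $\gamma$-dependent drift $\gamma\mathcal{D}_y$. The linearized-process representation $\mathrm{E}Z_t^\gamma$ is the route I would try first. It needs care in justifying that the growth rate of $\mathrm{E}Z_t^\gamma$ equals $\lambda(\gamma)$, which uses two-sided bounds via the positive eigenfunction, $c\,z^\gamma\le z^\gamma\varphi_\gamma\le C z^\gamma$.
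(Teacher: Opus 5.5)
The paper gives no proof of this lemma; it simply refers to \cite{freidlin2023perturbations}. Your argument is a correct, self-contained proof, and it follows the standard principal-eigenvalue/convexity route for this kind of ``stability index'' statement:
\begin{itemize}
\item $\lambda(\gamma)$ is the principal eigenvalue of $M_\gamma$;
\item $\lambda(0)=0$ and $\lambda'(0)=\beta^{avg}-\alpha^{avg}$;
\item $\lambda$ is convex, via $\mathrm{E}\,e^{\gamma \ln Z_t}$ for the linearized process;
\item $\lambda$ is coercive, from $\alpha\ge c>0$, which Assumption (c) gives.
\end{itemize}
Lifting to the linearized generator $M$ is the right device for the step you flagged as the main obstacle. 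There $e^{-\lambda(\gamma)t}\varphi_\gamma(Y_t)Z_t^\gamma$ is an exact martingale, so the $\gamma$-dependent drift $\gamma\mathcal{D}_y$ is absorbed, and H\"older gives convexity. This is also consistent with the paper's later remark that $\varphi(Y_t)Z_t^\gamma$ is ``almost a martingale.'' The final step is also sound: a convex function with $\lambda(0)=0$, $\lambda'(0)\neq 0$, and $\lambda\to+\infty$ at $\pm\infty$ has exactly one other zero, on the side opposite to the sign of $\lambda'(0)$.

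One small correction: your aside that $\lambda(\gamma)\ge\min_S V_\gamma$ ``is false in general'' is itself wrong. The operator $L_y+\gamma\mathcal{D}_y$ has no zero-order term, so you can evaluate $M_\gamma\varphi_\gamma=\lambda(\gamma)\varphi_\gamma$ at a minimum point of $\varphi_\gamma$, where $\mathcal{D}_y\varphi_\gamma=0$ and $L_y\varphi_\gamma\ge 0$. Equivalently, bound $V_\gamma\ge\min_S V_\gamma$ inside the Feynman--Kac formula. Either way you get $\lambda(\gamma)\ge\min_S V_\gamma\ge\gamma^2\min_S\alpha-|\gamma|\max_S|\beta-\alpha|$ directly. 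Your Jensen bound with the invariant measure $\pi_\gamma$ is also valid, so nothing in the proof breaks.
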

For example, if $\alpha \equiv \alpha^{avg}$ and $\beta \equiv \beta^{avg}$ are constants, then $\gamma = 1-( \beta^{avg}/\alpha^{avg})$ and $\varphi \equiv 1$. In the general case, the lemma was proved in \cite{freidlin2023perturbations} and served as the basis for most of the analysis there. We are interested in the case when
$
\alpha^{avg}-\beta^{avg}<0$, i.e., 
 $\gamma<0$. 

It follows from the lemma that the function $\varphi(y)z^{\gamma}$, satisfies the equation
$$
M (\varphi(y)z^\gamma) = L_y \varphi(y)z^\gamma+z^2\alpha(y)\frac{\partial^2 \varphi(y)z^\gamma}{\partial z^2} + z \beta(y) \frac{\partial \varphi(y)z^\gamma}{\partial z} + z \mathcal{D}_y \frac{\partial \varphi(y)z^\gamma}{\partial z}  = 0
$$
in  $S^\delta$, where $M := L - R$ and $\delta$ is sufficiently small. With the change of variables $(y,\mathbf{z})=(y,\ln(z))$, the operator $M$ is transformed into
$$
Ku:= L_y u + \alpha(y) \frac{\partial^2 u}{\partial \mathbf{z}^2} + \beta(y) \frac{\partial u}{\partial \mathbf{z}} + \mathcal{D}_y \frac{\partial u}{\partial \mathbf{z}}.
$$
In \cite{Metastability}, it is assumed that the operator $K$ is elliptic on $S\times \mathbb{R}$. Note that the ellipticity of $K$ is equivalent to the Assumption (c) we have made in Section \ref{intro}.

For $\varkappa \geq 0$ and  $0 \leq \varkappa_1 \leq \varkappa_2$,  we can define the following sets:
%
\begin{equation}
\label{defineition of layers}
\begin{aligned}
\Gamma_\varkappa &= \{(y,z):(\varphi(y))^{\frac{1}{\gamma}}z=\varkappa\},\\
V_{\varkappa_1,\varkappa_2} &= \{(y,z): \varkappa_1\leq (\varphi(y))^{\frac{1}{\gamma}}z\leq \varkappa_2\}.\\
\end{aligned}
\end{equation}
These are subsets of $S^\delta$ if $\varkappa, \varkappa_1$, and $\varkappa_2$ are sufficiently small.
The transition probabilities and transition times between different layers $\Gamma_\varkappa$ for the processes $X_t$ and $X^\varepsilon_t$ have been estimated in \cite{freidlin2023perturbations} and \cite{Metastability}, where the proofs of the following lemmas can  be found. One of the main ideas 
involved in the proofs is that the process $\varphi(Y_t) Z_t^\gamma$ is almost a martingale in a neighborhood of $S$ due to the choice of $\gamma$ (Lemma~\ref{lemmaspectral}). 
Had it been a martingale, the transition probabilities could have been written out explicitly using the optional sampling theorem.
What makes it different from a martingale is the presence of the correction term $R$ in the generator $L$ of the process $X_t$ (see Lemma~\ref{loccoeff}). Appropriate sub- and super-solutions (closely approximating the actual solutions) of the corresponding elliptic problems were used to estimate the desired quantities.

We will use the notation $\tau(A)$ ($\tau^\varepsilon(A)$) for the first hitting time of a set $A$ by the process $X_t$ ($X^\varepsilon_t$).

\begin{lemma}
\label{unperturbed transition probability}
Let $\gamma<0$. For each $\eta>0$, for all sufficiently small $\varkappa_1,\varkappa_2$ (depending on $\eta$) satisfying $0<\varkappa_1<\varkappa_2$,
$$
\frac{(1+\eta)\zeta^\gamma-\varkappa_1^\gamma}{\varkappa_2^\gamma-\varkappa_1^\gamma} \leq \mathrm{P}_x(X_{\tau(\Gamma_{\varkappa_1}\cup \Gamma_{\varkappa_2})}\in \Gamma_{\varkappa_2})\leq \frac{(1-\eta)\zeta^\gamma-\varkappa_1^\gamma}{\varkappa_2^\gamma-\varkappa_1^\gamma} 
$$
provided that $x \in  \Gamma_\zeta$ with $\varkappa_1\leq \zeta \leq \varkappa_2$.
\end{lemma}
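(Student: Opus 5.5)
The plan is to construct explicit sub- and super-solutions of the Dirichlet problem
\[
L u = 0 \ \text{in}\ V_{\varkappa_1,\varkappa_2}, \qquad u|_{\Gamma_{\varkappa_1}} = 0, \qquad u|_{\Gamma_{\varkappa_2}} = 1,
\]
whose solution is $u(x)=\mathrm{P}_x(X_{\tau(\Gamma_{\varkappa_1}\cup\Gamma_{\varkappa_2})}\in\Gamma_{\varkappa_2})$. The domain stays at positive distance from $S$, so $L$ is uniformly elliptic there by Assumption (c). The exit time is a.s. finite and the comparison principle applies, so any $w$ with $Lw\ge 0$ (resp. $\le 0$) in the layer and $w\le u$ (resp. $\ge u$) on the boundary lies below (resp. above) $u$; this follows from Itô's formula and optional stopping.

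The natural candidate comes from Lemma~\ref{lemmaspectral}. The function $\psi=\varphi(y)z^\gamma$ satisfies $M\psi=0$, and its level sets are exactly the $\Gamma_\zeta$, with $\psi=\zeta^\gamma$ on $\Gamma_\zeta$. Thus $(\psi-\varkappa_1^\gamma)/(\varkappa_2^\gamma-\varkappa_1^\gamma)$ has the correct boundary values. It fails to be $L$-harmonic only through $R\psi$. Since every term of $R$ carries an extra factor of $z$ compared to $M$, we get $|R\psi|\le C z\,\psi$ near $S$.

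To absorb this error, I would perturb the exponent and the profile, taking $w_\pm=\varphi_\pm(y)z^{\gamma}(1\pm \eta' h(z))$, for instance with $h(z)=z$ or $z^{\theta}$. Then $L w_\pm = M w_\pm + R w_\pm$. The term $M(\varphi z^{\gamma+1})$ equals $z^{\gamma+1}$ times a fixed function that is nonzero on average, because $\gamma+1$ is not a root of the eigenvalue relation. Choosing the correcting factor appropriately, this term has a definite sign after a small adjustment of $\varphi$. The cleanest way is to take the principal eigenfunction $\varphi_{\gamma+1}$ solving the same equation with $\gamma$ replaced by $\gamma+1$ and a shifted eigenvalue, and add $\pm C\varphi_{\gamma+1}z^{\gamma+1}$. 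Its contribution is $\pm c\, z^{\gamma+1}$ with $c\neq 0$, which dominates $|R\psi|\le Cz^{\gamma+1}$ when the constant is large. The correction is relatively of size $Cz\le C\varkappa_2$, so for $\varkappa_2$ small it is at most $\eta\psi$. Normalizing by the boundary values then gives
\[
\frac{(1\pm\eta)\zeta^\gamma-\varkappa_1^\gamma}{\varkappa_2^\gamma-\varkappa_1^\gamma}
\]
up to adjusting the boundary values. I would handle these by subtracting or adding constants multiplied by $\varkappa_i^\gamma$ so that the inequalities on $\Gamma_{\varkappa_1}$ and $\Gamma_{\varkappa_2}$ go the right way, noting that constants are $L$-harmonic.

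The main obstacle is the sign bookkeeping. One must verify that the sign of $Lw_\pm$ is uniform over the whole layer for $\gamma<0$, where $z^\gamma$ blows up near $S$, and that the boundary mismatches, which are of relative order $\eta$, are compatible with the claimed form. In that form the $\eta$ multiplies only $\zeta^\gamma$ and not the $\varkappa_i^\gamma$. I would check this by writing $w=\zeta$-dependent corrections explicitly, as above.
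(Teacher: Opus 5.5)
Your strategy is the one the paper relies on. The paper does not prove this lemma itself; it cites \cite{freidlin2023perturbations} and \cite{Metastability} and describes the argument as sub- and super-solutions built around the nearly harmonic function $\varphi(y)z^\gamma$, with the discrepancy coming from $R$. However, the corrector you single out as the cleanest choice fails in one case. Write $A_\lambda\phi=L_y\phi+\alpha\lambda(\lambda-1)\phi+\beta\lambda\phi+\lambda\mathcal{D}_y\phi$, so that $M(\phi z^\lambda)=z^\lambda A_\lambda\phi$, and let $\Lambda(\lambda)$ be its principal eigenvalue. Then $\Lambda(\gamma)=0$, but also $\Lambda(0)=0$, with constants as eigenfunctions. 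So your claim that $\gamma+1$ is not a root is false for $\gamma=-1$. In that case $\varphi_{\gamma+1}z^{\gamma+1}$ is a constant, $M$ annihilates it, and it cannot absorb $R(\varphi z^\gamma)$. For $\gamma\neq-1$ you also give no reason why $\Lambda(\gamma+1)\neq0$; when $\gamma\in(-1,0)$ this needs, e.g., convexity of $\Lambda$.

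The repair is to take $\lambda=\gamma+\theta$ with $0<\theta<\min(1,-\gamma)$. Then $\lambda\in(\gamma,0)$, so $\Lambda(\lambda)\neq0$ by the uniqueness statement in Lemma~\ref{lemmaspectral}: a zero principal eigenvalue at a second negative exponent would contradict it. Moreover $z^\lambda\ge z^{\gamma+1}$ for $z\le1$, so the correction still dominates $|R(\varphi z^\gamma)|\le Cz^{\gamma+1}$.

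With this choice, the sign bookkeeping you flag as the main obstacle closes without difficulty. For $w=\varphi z^\gamma+K\varphi_\lambda z^\lambda$ one has $Lw=z^\lambda\bigl(K\Lambda(\lambda)\varphi_\lambda+O(z^{1-\theta})+O(|K|z)\bigr)$. Hence for $|K|$ large and $z\le z_0$, the sign of $Lw$ is that of $K\Lambda(\lambda)$ throughout the layer, independently of $\varkappa_1$. The blow-up of $z^\gamma$ near $S$ is irrelevant because every term is homogeneous in $z$. On $\Gamma_\zeta$ one has $(1-\epsilon)\zeta^\gamma\le w\le(1+\epsilon)\zeta^\gamma$ with $\epsilon=C\varkappa_2^\theta$.

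Since $\varkappa_2^\gamma-\varkappa_1^\gamma<0$, the lower bound comes from the $w$ with $Lw\le0$. Set $v=\bigl(w-(1-\epsilon)\varkappa_1^\gamma\bigr)/\bigl((1-\epsilon)(\varkappa_2^\gamma-\varkappa_1^\gamma)\bigr)$. Then $Lv\ge0$, $v\le0$ on $\Gamma_{\varkappa_1}$, and $v\le1$ on $\Gamma_{\varkappa_2}$. On $\Gamma_\zeta$, $v$ is at least the claimed lower bound with $1+\eta=(1+\epsilon)/(1-\epsilon)$. Symmetrically, the $w$ with $Lw\ge0$, normalized with $(1+\epsilon)$ in place of $(1-\epsilon)$, gives the upper bound with $1-\eta=(1-\epsilon)/(1+\epsilon)$. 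So the stated form, with $\eta$ multiplying only $\zeta^\gamma$, drops out directly, and only $\varkappa_2$ has to be small.
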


There is a similar lemma for the case $\gamma>0$. Even though we focus on the case where $\gamma < 0$, this lemma will be needed in one of the proofs.

\begin{lemma} \label{attracting transition probability} Let $\gamma > 0$. For each $\eta > 0$,  for all sufficiently small $\varkappa_1, \varkappa_2$ (depending on $\eta$) satisfying
$0 < \varkappa_1 < \varkappa_2$,
$$
  \frac{(1 - \eta)\zeta^\gamma - \varkappa_1^\gamma}{\varkappa_2^\gamma - \varkappa_1^\gamma} \leq \mathrm{P}_x 
( X_{ \tau (\Gamma_{\varkappa_1} \bigcup \Gamma_{\varkappa_2})} \in \Gamma_{\varkappa_2}) \leq 
\frac{(1 + \eta) \zeta^\gamma - \varkappa_1^\gamma}{\varkappa_2^\gamma - \varkappa_1^\gamma},
$$ 
provided that $x \in \Gamma_\zeta$ with $ \varkappa_1 \leq \zeta \leq \varkappa_2$.
\end{lemma}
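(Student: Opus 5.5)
Following the strategy the paper attributes to \cite{freidlin2023perturbations} and \cite{Metastability}, we compare the process with the function $f(y,z)=\varphi(y)z^\gamma$, which is an exact solution of $Mf=0$ by Lemma~\ref{lemmaspectral}. The error comes only from the remainder $R$ in Lemma~\ref{loccoeff}. Since $R$ carries an extra factor of $z$ compared to the leading terms, we have $Rf = O(z^{\gamma+1})$, with constants depending on $\|\varphi\|_{C^2}$. This is small relative to the leading terms, which are of size $z^\gamma$.

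The first step is to build perturbed barriers $f_\pm(y,z)=\varphi(y)z^\gamma(1\pm C z^{\theta})$ for some small $\theta\in(0,1]$, or alternatively $f_\pm = \varphi(y)z^{\gamma}\pm C\psi(y)z^{\gamma+\theta}$. The sign of $\pm C$ is chosen so that $L f_+\ge 0\ge Lf_-$ in $S^\delta$ for $\delta$ small. Computing $M(\varphi z^{\gamma+\theta})$ gives
\[
z^{\gamma+\theta}\Big(L_y\varphi+\alpha(\gamma+\theta)(\gamma+\theta-1)\varphi+\beta(\gamma+\theta)\varphi+(\gamma+\theta)\mathcal D_y\varphi\Big).
\]
By \eqref{elliptic1}, this equals
\[
z^{\gamma+\theta}\theta\big((2\gamma+\theta-1)\alpha+\beta+\mathcal D_y\big)\varphi.
\]
This expression need not have a definite sign. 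I would therefore instead take $\psi$ to solve the shifted eigen-problem, as in Lemma~\ref{lemmaspectral}. Concretely, for $\theta$ small, the principal eigenvalue $\lambda(\gamma+\theta)$ of the operator $\varphi\mapsto L_y\varphi+\alpha(s-1)s\varphi+\beta s\varphi+s\mathcal D_y\varphi$ is nonzero and has a definite sign for $s=\gamma+\theta$ near $\gamma$. This holds because $\lambda(\cdot)$ vanishes simply at $\gamma$, which is the content of the uniqueness in Lemma~\ref{lemmaspectral}. With the corresponding positive eigenfunction $\psi$, the correction term contributes $\lambda(\gamma+\theta)\psi z^{\gamma+\theta}$ of a fixed sign. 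This term dominates $R f = O(z^{\gamma+1})$ and $R(\psi z^{\gamma+\theta})=O(z^{\gamma+\theta+1})$ once $\theta<1$ and $z<\delta$ is small.

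The second step applies Dynkin's formula (optional stopping) to $f_\pm(X_t)$ up to $\tau=\tau(\Gamma_{\varkappa_1}\cup\Gamma_{\varkappa_2})$. This stopping time is a.s. finite and has bounded moments, since the region is a compact piece of $S^\delta$ away from $S$ where $L$ is uniformly elliptic by Assumption (c). We get $\mathrm E_x f_+(X_\tau)\ge f_+(x)$ and $\mathrm E_x f_-(X_\tau)\le f_-(x)$. On $\Gamma_\varkappa$ we have $\varphi(y) z^\gamma=\varkappa^\gamma$ exactly, by the definition \eqref{defineition of layers}. Moreover, $z\le c\varkappa$ there, so $f_\pm=\varkappa^\gamma(1+O(\varkappa^\theta))$ on $\Gamma_\varkappa$. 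Writing $p=\mathrm P_x(X_\tau\in\Gamma_{\varkappa_2})$, we obtain
\[
p\,\varkappa_2^\gamma(1\pm O(\varkappa_2^{\theta}))+(1-p)\varkappa_1^\gamma(1\pm O(\varkappa_1^\theta)) \gtrless \zeta^\gamma(1\pm O(\zeta^\theta)).
\]
Solving for $p$ gives the claimed bounds with $\eta$, once all $\varkappa$'s are small enough.

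The main obstacle is the bookkeeping in this last step. The errors on $\Gamma_{\varkappa_1}$ and $\Gamma_{\varkappa_2}$ must be absorbed into a multiplicative factor $(1\pm\eta)$ on $\zeta^\gamma$ alone, with the denominators $\varkappa_2^\gamma-\varkappa_1^\gamma$ left untouched. Since $\gamma>0$ here, $\varkappa_1^\gamma\le\zeta^\gamma\le\varkappa_2^\gamma$. I would handle this by putting the correction only into the barrier's shape, namely $f_\pm=\varphi z^\gamma\pm C\psi z^{\gamma+\theta}$. I would then use the monotone ordering to bound the combined boundary errors by $C\varkappa_2^\theta\,(p\varkappa_2^\gamma+(1-p)\varkappa_1^\gamma)$. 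Substituting $p\varkappa_2^\gamma+(1-p)\varkappa_1^\gamma\approx\zeta^\gamma$, which follows from the inequality itself, yields the $(1\pm\eta)\zeta^\gamma$ form with $\eta\sim C\varkappa_2^\theta$. The direction of the inequalities is reversed relative to Lemma~\ref{unperturbed transition probability}, which matches the change of sign of $\gamma$.
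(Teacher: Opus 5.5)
Your proposal is correct and follows essentially the route the paper attributes to \cite{freidlin2023perturbations} and \cite{Metastability}: you correct the near-martingale $\varphi(y)z^\gamma$ into sub- and super-solutions using the principal eigenfunction at the shifted exponent $\gamma+\theta$, then apply optional stopping and absorb the $O(\varkappa_2^\theta)$ boundary errors into the factor $(1\pm\eta)$. One minor point: you do not need $\lambda$ to vanish simply at $\gamma$, because the uniqueness in Lemma~\ref{lemmaspectral} already gives $\lambda(\gamma+\theta)\neq 0$ and you are free to choose the sign of $C$, which is all the argument uses (also rename your eigenfunction, since $\psi$ already denotes the adjoint eigenfunction of Lemma~\ref{adjointelliptic}).
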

A similar probability estimate holds for $X_t^\varepsilon$ with $\varepsilon$ small.
\begin{lemma}
\label{perturbed transition probability}
Let $\gamma<0$. For each $\eta>0$, for all sufficiently large $r$ (depending on $\eta$), for all sufficiently small $\varkappa_2>0$ (depending on $\eta$), for all sufficiently small $\varepsilon$ (depending on $\eta$, $r$, and $\varkappa_2$), and for all $\varkappa_1\in[r\varepsilon,\varkappa_2)$,
$$
\frac{(1+\eta)\zeta^\gamma - \varkappa_1^\gamma}{\varkappa_2^\gamma-\varkappa_1^\gamma}\leq \mathrm{P}_x(X^{\varepsilon}_{\tau(\Gamma_{\varkappa_1}\bigcup \Gamma_{\varkappa_2})}\in \Gamma_{\varkappa_2}) \leq \frac{(1-\eta)\zeta^\gamma - \varkappa_1^\gamma}{\varkappa_2^\gamma-\varkappa_1^\gamma}
$$
provided that $x\in \Gamma_\zeta$ with $\varkappa_1\leq \zeta\leq \varkappa_2$.
\end{lemma}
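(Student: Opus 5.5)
We would prove Lemma~\ref{perturbed transition probability} in the same way as Lemma~\ref{unperturbed transition probability}: build explicit sub- and super-solutions of the Dirichlet problem for $L^\varepsilon = L + \varepsilon^2 \widetilde L$ in the layer $V_{\varkappa_1,\varkappa_2}$, built from the almost-harmonic function $\varphi(y) z^\gamma$, and then apply the optional sampling theorem. Put $u(y,z) = \varphi(y) z^\gamma$, so that $\Gamma_\varkappa = \{u = \varkappa^\gamma\}$. The candidate barriers are
$$
w_\pm(y,z) = \varphi(y) z^\gamma \bigl(1 \pm \eta' z^{\theta}\bigr) \pm \eta'' \varphi(y) z^{\gamma}\ln\!\left(\frac{\varkappa_2}{z}\right)\cdot 0 + \ldots,
$$
although we would first try the simpler choice $w_\pm = u\,(1\pm \eta' h(z))$. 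Here $h$ is a small correction with $h\ge 0$, whose $z$-derivatives are chosen so that $L^\varepsilon w_+ \le 0 \le L^\varepsilon w_-$ in the layer.

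The computation splits into three terms. First, $M u = 0$ exactly. Second, $R u = O(z)\,|u|$, since each term of $R$ carries an extra power of $z$ relative to $M$. Third, the perturbation gives $\varepsilon^2 \widetilde L u = O(\varepsilon^2 z^{-2})\,|u|$, because $\widetilde L$ is non-degenerate and its $\partial_z^2$ term hits $z^\gamma$ without the compensating factor $z^2$. In the layer $z \ge c\,r\varepsilon$, so $\varepsilon^2 z^{-2} \le c^{-2} r^{-2}$. Both error terms are therefore small relative to $|u|$: the second is of order $\varkappa_2$ and the third of order $r^{-2}$. This is exactly why $r$ must be large, $\varkappa_2$ small, and $\varepsilon$ small depending on these.

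To absorb the errors we use a correction such as $w_\pm = u \pm \eta' u\,\psi(\ln z)$, where $\psi$ is a convex or concave function of $\mathbf z = \ln z$. In the $(y,\mathbf z)$ variables, $K$ applied to $\varphi e^{\gamma\mathbf z}\psi(\mathbf z)$ produces $\alpha\varphi e^{\gamma \mathbf z}\psi''$ plus first-order terms in $\psi'$. By the ellipticity of $K$ (Assumption (c)), choosing $\psi = \pm \mathbf{z}^2$-type functions, or exponentials $e^{\mp\lambda \mathbf z}$ that are bounded on the range $\mathbf z \in [\ln(r\varepsilon),\ln\varkappa_2]$, gives a term of definite sign dominating $|u|\,(C\varkappa_2 + C r^{-2})$. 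A cleaner alternative is $w_\pm = \varphi z^{\gamma}(1 \pm \eta')$ perturbed to $\varphi z^{\gamma\mp\delta'}$. Using Lemma~\ref{lemmaspectral} and continuity in $\gamma$ of the principal eigenvalue, $L^\varepsilon(\varphi_{\gamma'} z^{\gamma'})$ then has a strict sign of order $|u|$ for $\gamma'$ near $\gamma$; this strict sign beats the errors, and the ratio $z^{\mp\delta'}$ stays within $1\pm\eta$ only on a bounded range of $\ln z$. Since the range of $\ln z$ here is unbounded as $\varepsilon\to0$, we instead expect to need the multiplicative correction $1\pm\eta' z^{\theta}$ with $\theta>0$, combined with a direct lower bound $\alpha\theta(\theta+2\gamma-1) + \ldots$ on the sign. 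This amounts to working out which $\theta$ gives the right sign, and it is where Lemma~\ref{unperturbed transition probability}'s proof in \cite{freidlin2023perturbations} is reused verbatim, the only new input being the $O(r^{-2})$ bound on $\varepsilon^2\widetilde L$.

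Given barriers with $L^\varepsilon w_+\le 0 \le L^\varepsilon w_-$ and boundary values within factors $(1\pm\eta)$ of $\varkappa_j^\gamma$ on $\Gamma_{\varkappa_j}$, we apply Itô's formula up to $\tau^\varepsilon(\Gamma_{\varkappa_1}\cup\Gamma_{\varkappa_2})$, which is integrable since $X^\varepsilon$ is non-degenerate in the bounded layer. Solving the resulting linear inequalities for $p = \mathrm{P}_x(X^\varepsilon_\tau\in\Gamma_{\varkappa_2})$ yields the stated bounds after renaming $\eta$. We expect the main obstacle to be making the barrier correction uniform over the growing logarithmic range $[\ln(r\varepsilon), \ln \varkappa_2]$. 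Near $z\sim r\varepsilon$ the perturbation error is only $O(r^{-2})$ relative, not vanishing, so the correction must be tied to $r$ rather than to $\varepsilon$; the order of quantifiers in the statement ($r$, then $\varkappa_2$, then $\varepsilon$) is designed for exactly this.
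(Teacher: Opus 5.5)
Your overall strategy (barriers built on $u=\varphi(y)z^\gamma$, then optional sampling) is the one the paper relies on; it quotes this lemma from \cite{Metastability}, where it is proved with sub- and super-solutions. However, the barrier you finally commit to does not work. The claim that the unperturbed proof carries over verbatim, with the $O(r^{-2})$ bound as the only new input, is false.

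Pointwise smallness of $L^\varepsilon u/u$ is not what the argument needs. The barrier must contain a term whose image under $L^\varepsilon$ has a definite sign and dominates the error at every point of $V_{\varkappa_1,\varkappa_2}$. At the same time the barrier must stay within a factor $1\pm\eta$ of $u$ over a range of $\ln z$ of length $\sim|\ln\varepsilon|$.

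Your correction $\eta'\hat\varphi(y)z^{\gamma+\theta}$ with $\gamma+\theta\in(\gamma,0)$ (with variable coefficients this needs an eigenfunction $\hat\varphi$, not $\varphi$, for $M$ of it to have a sign) contributes a term of relative size $\asymp\eta' z^\theta$. That beats $Ru=O(z)\,u$, but it vanishes at the inner end of the layer. There
\[
\varepsilon^2\widetilde L u=\varepsilon^2\tilde a^{zz}(y)\,\gamma(\gamma-1)\,\varphi(y)\,z^{\gamma-2}\,(1+O(z))
\]
is strictly positive, since $\gamma(\gamma-1)>0$ and $\tilde a^{zz}>0$ by (d). Its relative size is $\asymp r^{-2}$ at $z\asymp r\varepsilon$. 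Hence $L^\varepsilon w_+\le 0$ fails on the band $r\varepsilon\lesssim z\lesssim(\varepsilon^2/\eta')^{1/(2+\theta)}$, which is nonempty for small $\varepsilon$.

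The sign is what matters here. For $w_-$ the perturbation helps, so your upper bound on the probability is fine. But $w_+$ is exactly what yields the lower bound on $\mathrm{P}_x(X^\varepsilon_{\tau}\in\Gamma_{\varkappa_2})$. That is the bound the perturbation threatens: the added noise gives $u(X^\varepsilon_t)$ a positive drift, biasing the process toward $S$. This is also why $r$ must be large.

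What is missing is a second correction that decays away from the inner boundary. The lemma holds because, in $\mathbf z=\ln z$, the error is integrable over the whole range:
\[
\int_{\ln(r\varepsilon)}^{\ln\varkappa_2}\bigl(e^{\mathbf z}+\varepsilon^2e^{-2\mathbf z}\bigr)\,d\mathbf z=O(\varkappa_2+r^{-2}).
\]
The barrier must reflect both decays. For instance, take
\[
w_+=\varphi z^\gamma+\eta'\hat\varphi_1z^{\gamma+\theta}-Cr^{-2}(r\varepsilon)^{\theta'}\hat\varphi_2z^{\gamma-\theta'},\quad 0<\theta<\min(1,-\gamma),\quad 0<\theta'<2.
\]
Here $\hat\varphi_1,\hat\varphi_2>0$ are principal eigenfunctions of $L_y+\alpha\gamma'(\gamma'-1)+\beta\gamma'+\gamma'\mathcal{D}_y$ for $\gamma'=\gamma+\theta$ and $\gamma'=\gamma-\theta'$.

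The principal eigenvalue is convex in $\gamma'$ and vanishes at $\gamma'=\gamma$ and $\gamma'=0$. So it is negative at $\gamma+\theta$ and positive at $\gamma-\theta'$, and $M$ maps both corrections to negative functions. Taking the constants $c,c'>0$ to depend only on the coefficients:
\begin{itemize}
\item The $M$-image of the last term exceeds $\varepsilon^2\widetilde L u$ by a factor at least $cC\,(z/(r\varepsilon))^{2-\theta'}\ge c'C$.
\item The size of the last term relative to $u$ is at most $Cr^{-2}(r\varepsilon/z)^{\theta'}/c\le Cr^{-2}/c'$, uniformly in the layer.
\item $R$ and $\varepsilon^2\widetilde L$ applied to the corrections are of relative order $z$ and $r^{-2}$ compared with their $M$-images.
\end{itemize}
With this $w_+$, and $w_-=\varphi z^\gamma-\eta'\hat\varphi_1z^{\gamma+\theta}$, your optional-sampling step finishes the proof.

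Your passing mention of exponentials $e^{\mp\lambda\mathbf z}$ points this way, but only if the decreasing one is normalized by $(r\varepsilon)^{\lambda}$, given the right sign, and paired with an eigenfunction in $y$. The $\mathbf z^2$-type corrections are unbounded on the growing range.
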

The perturbed process $X_t^\epsilon$ has small probability to reach a repelling surface $S$.
\begin{lemma}
\label{transition probability to surface}
Let $\gamma<0 $. There is $\rho>0$ such that, for each $\eta >0$, for all
sufficiently small $\varkappa$, sufficiently large $s_1>0$, and sufficiently small $s_2>0$ (all dependent of $\eta$), there is  $\varepsilon_0$ such that
$$
\rho(1-\eta) \frac{\zeta^\gamma}{\varepsilon^\gamma}\leq \mathrm{P}_x(X^{\varepsilon}_{\tau(S\bigcup \Gamma_\varkappa)}\in S) \leq \rho(1+\eta) \frac{\zeta^\gamma}{\varepsilon^\gamma}
$$
provided that $x\in \Gamma_\zeta$ with $s_1\varepsilon \leq \zeta \leq s_2\varkappa$ and $0\leq \varepsilon\leq \varepsilon_0$.
\end{lemma}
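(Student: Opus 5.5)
The estimate splits the passage from $\Gamma_\zeta$ to $S$ into two regimes at the intermediate layer $\Gamma_{r\varepsilon}$, with $r$ a large fixed constant. Far from $S$, Lemma~\ref{perturbed transition probability} controls the passage from $\Gamma_\zeta$ to $\Gamma_{r\varepsilon}$. Near $S$, the problem is rescaled by $\varepsilon$, and a limiting problem independent of $\varepsilon$ produces the constant $\rho$. The two pieces are then combined with the strong Markov property at the successive hitting times of $\Gamma_{r\varepsilon}$.

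\textbf{Step 1: the far region.} For $x\in\Gamma_\zeta$ with $r\varepsilon\le\zeta\le s_2\varkappa$, Lemma~\ref{perturbed transition probability} with $\varkappa_1=r\varepsilon$ and $\varkappa_2=\varkappa$ shows that the probability of reaching $\Gamma_{r\varepsilon}$ before $\Gamma_\varkappa$ is
\[
\frac{\varkappa^\gamma-(1\pm\eta)\zeta^\gamma}{\varkappa^\gamma-(r\varepsilon)^\gamma}.
\]
Since $\gamma<0$, we have $\varkappa^\gamma\ll\zeta^\gamma$ once $\zeta\le s_2\varkappa$ with $s_2$ small, and $(r\varepsilon)^\gamma\gg\varkappa^\gamma$. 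Hence this probability equals $(1\pm2\eta)\,\zeta^\gamma/(r\varepsilon)^\gamma$.

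\textbf{Step 2: the inner region.} In the coordinates $(y,z/\varepsilon)$, the operator $L^\varepsilon=L+\varepsilon^2\widetilde L$ restricted to $\{z\le r\varepsilon\}$ converges, as $\varepsilon\downarrow0$, to
\[
L_y+\alpha(y)w^2\partial_w^2+\beta w\partial_w+w\mathcal{D}_y\partial_w+a(y)\partial_w^2+b(y)\partial_w .
\]
Here $a(y)>0$ is the normal component of the diffusion of $\widetilde L$ on $S$ (it is positive by Assumption (d)), and $b(y)$ collects the remaining first-order normal terms. The correction $R$ and the tangential parts of $\varepsilon^2\widetilde L$ contribute only $O(\varepsilon)$, so they vanish in the limit; the rescaled operator is uniformly elliptic on compact sets in $w$. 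For the limit process, let $h_r(y,w)$ be the probability of hitting $\{w=0\}$ before $\{\varphi^{1/\gamma}w=r\}$. For large $w$ the $w^2$ terms dominate, so the limit process far out behaves like the unperturbed one, for which $\varphi(y)w^\gamma$ is harmonic. Consequently, the probability of reaching $S$ from level $w$ before escaping to $+\infty$ behaves like $\rho\,w^\gamma$ as $w\to\infty$, for some constant $\rho>0$; this defines $\rho$. Uniform convergence of the perturbed hitting probabilities to those of the limit problem follows from the standard continuity of diffusions with respect to their coefficients on the compact region $\{\varphi^{1/\gamma}w\le r\}$. We also need that $\{w=0\}$ is accessible for the rescaled process, which holds because $a>0$ there.

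\textbf{Step 3: combining the pieces.} Starting from $\Gamma_{r\varepsilon}$, the process either hits $S$ before $\Gamma_{Kr\varepsilon}$ or else reaches $\Gamma_{Kr\varepsilon}$. In the latter case it returns to $\Gamma_{r\varepsilon}$ before $\Gamma_\varkappa$ with probability approximately $K^\gamma$, by Lemma~\ref{perturbed transition probability}. Summing the resulting geometric series of excursions gives
\[
\mathrm{P}(\text{hit } S \text{ before } \Gamma_\varkappa \mid \text{start on } \Gamma_{r\varepsilon})=(1\pm\eta)\,\rho\, r^\gamma
\]
for large $r$ and $K$. Multiplying by the result of Step~1 yields $\rho(1\pm C\eta)\,\zeta^\gamma/\varepsilon^\gamma$. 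This covers $\zeta\ge r\varepsilon$; taking $s_1\ge r$ gives the stated range.

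\textbf{Main obstacle.} The hardest step is Step~2: proving that $\rho(r)r^\gamma$, the inner hitting probability from $\Gamma_{r\varepsilon}$ rescaled by $r^{-\gamma}$, converges as $r\to\infty$, with the limit independent of the starting point $y$ on $\Gamma_{r\varepsilon}$. Independence of $y$ should follow from ergodicity of the motion along $S$, through a Harnack-type averaging over the many excursions near $\Gamma_{r\varepsilon}$. Convergence in $r$ should follow from the approximate martingale property of $\varphi(Y_t)Z_t^\gamma$ in the intermediate region $r\varepsilon\le z\le Kr\varepsilon$, which shows that $\rho(r)r^\gamma$ is Cauchy in $r$. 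This is where the sub- and super-solutions from \cite{Metastability} would be adapted.
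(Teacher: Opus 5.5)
Your outline is correct and matches the route the paper indicates for this lemma, which it imports from \cite{freidlin2023perturbations,Metastability} rather than proving: rescale $z\to z/\varepsilon$ near $S$ to a limiting problem whose leading-order coefficients determine $\rho$, glue it to the far region via the near-martingale $\varphi(y)z^\gamma$ (Lemma~\ref{perturbed transition probability}), and settle the crux by combining two ingredients, namely the quasi-monotonicity in $r$ of $r^{-\gamma}\sup_{\Gamma_r}h$ and $r^{-\gamma}\inf_{\Gamma_r}h$ coming from that near-martingale, and a multiscale coupling across the layers the path must cross on its way inward (in the spirit of the proof of Lemma~\ref{g constant on interval}, rather than ``many excursions near $\Gamma_{r\varepsilon}$''), since neither ingredient alone gives the Cauchy property. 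Two small corrections: after rescaling, the first-order normal terms of $\varepsilon^2\widetilde L$ are also $O(\varepsilon)$, so $b\equiv 0$ (consistent with $\rho$ being the same on both sides of $S$), and the far-field asymptotics is $\rho\,\varphi(y)w^\gamma$, i.e.\ $\rho r^\gamma$ on $\Gamma_r$, rather than $\rho w^\gamma$ uniformly in $y$.
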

Estimates on the expectation of the exit time have also been obtained.
\begin{lemma}
\label{unperturbed expectation exit time}
Let $\gamma<0$. There is $b>0$  such that, for all sufficiently small $\varkappa>0$, 
\[
\mathrm{E}_x\tau(\Gamma_\varkappa) \leq b(1 + \ln(\varkappa/\zeta)),~x \in V_{0,\varkappa}\setminus S,
\]
where $\zeta= \zeta(x)$ is such that $x\in \Gamma_{\zeta}$.
\end{lemma}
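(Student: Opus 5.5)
Work in the coordinates $(y,\mathbf{z})=(y,\ln z)$, where the principal part $M=L-R$ becomes the elliptic operator $K$ on $S\times\mathbb{R}$. The plan is to build a Lyapunov-type supersolution $w$ with $Lw\le -1$ on $V_{0,\varkappa}\setminus S$. Dynkin's formula, applied to $\tau(\Gamma_\varkappa)\wedge t$ and to the region between a small inner layer $\Gamma_{\varkappa'}$ and $\Gamma_\varkappa$, then gives $\mathrm{E}_x\tau(\Gamma_\varkappa)\le w(x)-\inf w$. Since the surface is inaccessible, letting $\varkappa'\downarrow0$ and then $t\to\infty$ (monotone convergence) removes the truncations.

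I would look for $w$ in the form
\[
w(y,z)=b\big(\ln(\varkappa/z)+\psi(y)+c\big),
\]
with $\psi\in C^2(S)$ to be chosen. Using Lemma~\ref{loccoeff}, with $u=\ln(\varkappa/z)$ we have $z\partial_z u=-1$ and $z^2\partial_z^2u=1$, so
\[
M u=\alpha(y)-\beta(y),
\]
because the $\mathcal D_y$ term acts on the $y$-independent function $-1/z$ and gives $0$. Adding $\psi(y)$ contributes $L_y\psi$. The correction $R$ applied to $\ln(\varkappa/z)+\psi$ gives terms of order $z$: $z\mathcal K_y\psi$, $z^2\mathcal N_y(-1/z)$ (bounded by the potential term times $z$), and $z^3\sigma/z^2$. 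Hence
\[
L\big(\ln(\varkappa/z)+\psi\big)=\alpha-\beta+L_y\psi+O(z).
\]

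Next I solve the Poisson equation on $S$:
\[
L_y\psi=(\beta-\alpha)-(\beta^{avg}-\alpha^{avg}).
\]
The right-hand side has zero $\pi$-mean, and $L_y$ is non-degenerate on the compact surface $S$ by Assumption~(a), so a $C^2$ solution $\psi$ exists. With this choice,
\[
L\big(\ln(\varkappa/z)+\psi\big)=-(\beta^{avg}-\alpha^{avg})+O(z)\le -\tfrac12(\beta^{avg}-\alpha^{avg})<0
\]
for $z<\varkappa$ with $\varkappa$ small. This uses $\beta^{avg}>\alpha^{avg}$, which is exactly the repelling case $\gamma<0$ in Lemma~\ref{lemmaspectral}. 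Choosing $b=2/(\beta^{avg}-\alpha^{avg})$ gives $Lw\le-1$.

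Choose $c\ge\sup|\psi|$ plus a constant accounting for $\varphi^{1/\gamma}$, so that $w\ge0$ on $V_{0,\varkappa}$. Then
\[
\mathrm{E}_x\tau(\Gamma_\varkappa)\le w(x)\le b\big(\ln(\varkappa/\zeta)+C\big),
\]
since $z$ and $\zeta=\varphi^{1/\gamma}z$ differ by a bounded factor. After enlarging $b$ this is the claimed bound $b(1+\ln(\varkappa/\zeta))$.

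The main obstacle is justifying Dynkin's formula without a priori finiteness of the exit time near the degenerate surface. I would localize with $\tau(\Gamma_{\varkappa'}\cup\Gamma_\varkappa)\wedge t$, which is finite because the process is non-degenerate inside the domain. On $\Gamma_{\varkappa'}$ the value of $w$ is large, but we only need the bound
\[
\mathrm{E}\,\big[\tau(\Gamma_{\varkappa'}\cup\Gamma_\varkappa)\wedge t\big]\le w(x),
\]
which holds because $w\ge0$. Since $\tau(\Gamma_{\varkappa'})\to\infty$ as $\varkappa'\downarrow 0$ by inaccessibility, monotone convergence finishes the argument.
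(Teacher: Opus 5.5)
Your proof is correct, but it follows a different route from the one the paper indicates. The paper does not prove this lemma. It cites \cite{freidlin2023perturbations}, \cite{Metastability}, where the near-surface estimates come from comparison with sub- and super-solutions built around the almost-martingale $\varphi(Y_t)Z_t^\gamma$. That means power-type functions $\varphi(y)z^{\gamma}$, or $\hat\varphi(y)z^{\hat\gamma}$ with $M\hat v=-\lambda\hat v$ as in the proof of (\ref{last step exit time}).

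Such functions are suited to exit probabilities and to power-type bounds. To get a logarithmic bound on the mean time from them, one needs an extra argument. An example is counting crossings of the geometric layers $\Gamma_{\varkappa e^{-k}}$: crossing times are uniformly bounded by the scaling Remark in Section~\ref{procbeh}, and the outward bias comes from Lemma~\ref{unperturbed transition probability}.

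Your test function $\ln(\varkappa/z)+\psi(y)$, with a Poisson corrector on $S$, gives the logarithm in one step. It needs only two ingredients:
\begin{itemize}
\item solvability of $L_y\psi=f$ whenever $\int_S f\,d\pi=0$;
\item the sign condition $\beta^{avg}>\alpha^{avg}$, i.e.\ $\gamma<0$.
\end{itemize}
It also shows that the coefficient of $\ln(\varkappa/\zeta)$ can be taken arbitrarily close to $1/(\beta^{avg}-\alpha^{avg})$ for small $\varkappa$. This is the natural rate, since $\ln Z_t$ drifts away from $S$ with average speed $\beta^{avg}-\alpha^{avg}$.

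The two viewpoints are linked. Let $\varphi_\gamma$ and $\Lambda(\gamma)$ be the principal eigenfunction and eigenvalue of $L_y+\alpha\gamma(\gamma-1)+\beta\gamma+\gamma\mathcal{D}_y$, normalized so that $\varphi_0\equiv 1$. Differentiating $M(\varphi_\gamma z^\gamma)=\Lambda(\gamma)\varphi_\gamma z^\gamma$ at $\gamma=0$ gives $M(\ln z+\dot\varphi_0)=\Lambda'(0)=\beta^{avg}-\alpha^{avg}$, where $\dot\varphi_0=\partial_\gamma\varphi_\gamma|_{\gamma=0}$. So your corrector is $-\dot\varphi_0$ up to an additive constant, and your function is the infinitesimal version of the power-type family.

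The remaining steps are sound: the localization with $\tau(\Gamma_{\varkappa'}\cup\Gamma_\varkappa)\wedge t$, the choice of $c$ that makes $w\ge 0$ on $V_{0,\varkappa}$, and the passage $\varkappa'\downarrow 0$ by monotone convergence using inaccessibility of $S$. One cosmetic point: $\psi$ already denotes the adjoint eigenfunction of Lemma~\ref{adjointelliptic}, so a different letter would avoid confusion.
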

The following Lemma is slightly different from the original version in \cite{Metastability}. However, the proof is similar.
\begin{lemma}
\label{perturbed expectation time}
Let $\gamma<0$. There is $b>0$ and for all sufficiently small $\varkappa$ there is $\varepsilon_0>0$ (that depends on $\varkappa$) such that
$$
\mathrm{E}_x\tau^\varepsilon( S\bigcup \Gamma_\varkappa) \leq b(1 + \min(\ln(\varkappa/\varepsilon), \ln(\varkappa/\zeta))),~x \in V_{0,\varkappa}, ~0 < \varepsilon \leq \varepsilon_0,
$$
where $\zeta= \zeta(x)$ is such that $x\in \Gamma_{\zeta}$.
\end{lemma}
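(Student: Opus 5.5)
We need to prove Lemma \ref{perturbed expectation time}: $\mathrm{E}_x\tau^\varepsilon(S\cup\Gamma_\varkappa)\le b(1+\min(\ln(\varkappa/\varepsilon),\ln(\varkappa/\zeta)))$ for $x\in V_{0,\varkappa}$. The approach is to construct a Lyapunov-type supersolution $u$ with $L^\varepsilon u\le -1$ in $V_{0,\varkappa}\setminus S$, $u\ge 0$, and then apply Dynkin's formula, stopped at $\tau^\varepsilon(S\cup\Gamma_\varkappa)\wedge t$, and let $t\to\infty$. In the variables $(y,\mathbf z)=(y,\ln z)$ the principal part $M=L-R$ becomes the elliptic operator $K$ (Assumption (c)). The natural candidate is therefore a function that is affine in $\mathbf z$ with a $y$-dependent correction.

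\textbf{Step 1 (unperturbed core).} Look for $w(y,z)=-\ln z+h(y)$. Then
\[
Mw=\alpha(y)-\beta(y)-\mathcal D_y\cdot 0+L_yh=\alpha-\beta+L_yh .
\]
Since $\alpha^{avg}-\beta^{avg}<0$ and $L_y$ is ergodic on $S$ with invariant measure $\pi$, the Fredholm alternative lets us solve $L_yh=-(\alpha-\beta)+(\alpha^{avg}-\beta^{avg})$. This gives $Mw=\alpha^{avg}-\beta^{avg}=:-2c_0<0$. The remainder satisfies $Rw=O(z)$, because $R$ carries a factor $z$ and derivatives of $-\ln z$ produce $z^{-1},z^{-2}$ that are compensated by $z^2\mathcal N_y\partial_z$ and $z^3\sigma\partial_z^2$. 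Hence, for $\varkappa$ small, $Lw\le -c_0$ in $V_{0,\varkappa}$. This is essentially the proof of Lemma~\ref{unperturbed expectation exit time}.

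\textbf{Step 2 (perturbation near $S$).} The term $\varepsilon^2\widetilde L w$ contains $\varepsilon^2\widetilde a^{zz}z^{-2}$, which is harmless only when $z\gg\varepsilon$. We therefore modify $w$ in the layer $z\lesssim\varepsilon$. Set $u=C(g(z)+h(y))$ with $g(z)=-\ln(\max(z,\varepsilon))$, smoothed near $z=\varepsilon$, and concave as a function of $\ln z$. Equivalently, take $g(z)=-\tfrac12\ln(z^2+\varepsilon^2)$ plus a term $-\lambda z^2/\varepsilon^2$ active for $z\le r\varepsilon$.

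\emph{Region $z\ge r\varepsilon$.} Here $\varepsilon^2\widetilde L w=O(\varepsilon^2/z^2)\le O(r^{-2})$, which is absorbed by $-c_0$ once $r$ is large.

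\emph{Region $z\le r\varepsilon$.} Here $\varepsilon^2\widetilde L$ is uniformly nondegenerate in $z$ at scale $\varepsilon$. Concretely, $\varepsilon^2\widetilde a^{zz}\partial_z^2(-\lambda z^2/\varepsilon^2)=-2\lambda\widetilde a^{zz}$, which gives $-c\lambda$. The first-order terms are $O(\lambda r)$ times small factors, and the contributions of $L$ on $z^2$ are $O(\lambda z^2/\varepsilon^2)$ with a sign. I expect this region to be the main obstacle: it requires checking that the $\lambda$-term is dominant, possibly with $\lambda$ depending on $r$, while not spoiling the estimate in the outer region. The fix I would try first is to cut it off smoothly at $z\sim r\varepsilon$ with a concave gluing in $\ln z$.

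\textbf{Step 3 (conclusion).} With $L^\varepsilon u\le -1$, $u\ge 0$ on $\Gamma_\varkappa$, and $u$ bounded on $S$, Dynkin's formula yields $\mathrm E_x\tau^\varepsilon\le u(x)$. We have $u(x)\lesssim C(1+\ln(\varkappa/\max(\zeta,\varepsilon)))$, after adding a constant $C\ln\varkappa$ so that $u\ge0$ on $\Gamma_\varkappa$. Finally, $\ln(\varkappa/\max(\zeta,\varepsilon))=\min(\ln(\varkappa/\varepsilon),\ln(\varkappa/\zeta))$, which gives the claim. The constant $b$ is independent of $\varkappa$, since $c_0$, $h$, and $\lambda$ depend only on the coefficients and on $r$, while $\varkappa$ and then $\varepsilon_0$ are chosen small afterwards.
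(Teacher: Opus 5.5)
\paragraph{The gap: the inner layer.} Your Step~1, the estimate for $z\ge r\varepsilon$ in Step~2, and the Dynkin argument in Step~3 are fine. The layer $z\lesssim\varepsilon$, however, is left open, and it is the only place where this lemma differs from Lemma~\ref{unperturbed expectation exit time}. The mechanism you propose there does not work as claimed. The normal part $z^2\alpha\partial_z^2+z\beta\partial_z$ of $L$ is invariant under $\hat z=z/\varepsilon$. Inside the layer it is therefore of the same order as $\varepsilon^2\widetilde a^{zz}\partial_z^2$, not a small correction. For $u=C(g(z)+h(y))$ with your corrector, up to $O(z)+O(\varepsilon)$,
\[
C^{-1}L^\varepsilon u=(\alpha^{avg}-\beta^{avg})+\alpha(y)\bigl(z^2g''-1\bigr)+\beta(y)\bigl(zg'+1\bigr)+\varepsilon^2\widetilde a^{zz}g''.
\]
So $h$ removes the oscillation of $\beta-\alpha$ only where $g$ behaves like $-\ln z$. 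For $g=-\frac12\ln(z^2+\varepsilon^2)$ the term $\beta(y)(zg'+1)=\beta(y)/(1+\hat z^2)$ is of order one for $\hat z\lesssim 1$, with a $y$-dependent sign, since only $\beta^{avg}>\alpha^{avg}$ is known.

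The quadratic term contributes $-2\lambda\bigl(\widetilde a^{zz}+(\alpha+\beta)\hat z^2\bigr)$. Its drift part $-2\lambda\beta\hat z^2$ is of size $\lambda r^2$, not ``$O(\lambda r)$ times small factors'', and has no sign. Switching the term off near $\hat z\sim r$ creates further terms of order $\lambda r^2$, again without a sign. All of this is linear in $\lambda$, so enlarging $\lambda$ does not make the correction dominant. Whether some profile works depends on the relative sizes of $\alpha$, $\beta$, $\widetilde a^{zz}$ and on the cutoff, and that is precisely the step that is missing. As written, $L^\varepsilon u\le -1$ in the layer is not established.

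\paragraph{A repair.} The simplest fix is to treat the two regions separately and join them by the strong Markov property. The paper gives no proof of this lemma and defers to earlier work, but it uses this rescaling-plus-excursions scheme for analogous bounds (see the sketch for Lemma~\ref{time to decide the destination}).

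Use $u_{\rm out}=C(\ln(\varkappa/z)+h+K)$ only on $V_{r\varepsilon,\varkappa}$, where $L^\varepsilon u_{\rm out}\le -1$ does hold. Let $\tau_1=\tau^\varepsilon(\Gamma_{r\varepsilon}\cup\Gamma_\varkappa)$. Dynkin's formula up to $\tau_1$ gives, for $x\in\Gamma_\zeta$ with $\zeta\ge r\varepsilon$,
\[
\mathrm{E}_x\tau_1+\mathrm{P}_x\bigl(X^\varepsilon_{\tau_1}\in\Gamma_{r\varepsilon}\bigr)\bigl(C\ln(\varkappa/\varepsilon)-C_1\bigr)\le C\bigl(\ln(\varkappa/\zeta)+C_2\bigr).
\]
This controls the time and the probability of descending simultaneously.

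In $V_{0,2r\varepsilon}$ the rescaled generator is uniformly elliptic on a fixed domain, uniformly in small $\varepsilon$. Hence from $\Gamma_{r\varepsilon}$ the process reaches $S\cup\Gamma_{2r\varepsilon}$ in expected time at most $T(r)$, and it hits $S$ first with probability at least $p_0(r)>0$. Alternating the two kinds of excursions gives
\[
\sup_{V_{0,2r\varepsilon}}\mathrm{E}\,\tau^\varepsilon(S\cup\Gamma_\varkappa)\le C_4\bigl(1+\ln(\varkappa/\varepsilon)\bigr).
\]
Inserting this into the previous display yields the $\ln(\varkappa/\zeta)$ branch. The constants depend on $r$ but not on $\varkappa$.

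If you prefer a single Lyapunov function, do not rely on a dominant quadratic term. Instead, on the bounded interval $0\le\hat z\le r$ define the profile by the ODE
\[
G''=\min_{y}\frac{-c-A(y)-\beta(y)\hat zG'}{\alpha(y)\hat z^2+\widetilde a^{zz}(y,0)},\qquad A=(\alpha^{avg}-\beta^{avg})+\beta-\alpha,
\]
matched in $C^1$ to $-\ln\hat z$ at $\hat z=r$. The right-hand side is Lipschitz in $G'$, so the solution exists and is bounded on $[0,r]$. The $r$-dependent constants are then absorbed into $b$.
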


We will also need some facts on the behavior of non-degenerate processes and the solutions to the corresponding PDEs in a given domain. 

\begin{lemma} \label{ellipticity estimation}

Let $U$ be an open connected domain in $\mathbb{R}^d$ or in a $d$-dimensional manifold embedded in a Euclidean space. Suppose that $\partial U = B= B_1 \bigcup ... \bigcup B_n$ is a disjoint union of a finite number of smooth $(d-1)$-dimensional compact surfaces and suppose that $\overline{U} = U \bigcup B$ is a compact set. Let an operator $L$ with smooth coefficients be uniformly elliptic on $\overline{U}$. Let $K \subset U$ be a compact set. Let $X_t$ be the diffusion process generated by $L$ (without the potential term). Let $\tau = \inf\{t: X_t \in B \}$.

$(i)$ Let $\mu^x$ be the probability measure on $B$ induced by $X_{\tau}$ starting at $x$, and let $p^x$ be its density with respect to the Lebesgue measure on $B$. Then there exists $C >0$ such  
that 
$$
p^x(\tilde{x}) \geq C,~~x \in K,~\tilde{x} \in B.
$$

$(ii)$ There exists $C>0$ such that
$$
\mathrm{E}_x\tau \geq C,\,x\in K.
$$

$(iii)$ Suppose that a non-negative  function $g \in C(\overline{U})$ satisfies  $L g(x) = 0,\, x\in U. $
Then there exists $C>0$ such that
$$
\sup_{K} g \leq C \inf_{K} g.
$$

$(iv)$ Suppose that $B$ is $C^2$-smooth and a non-constant function $g \in C(\overline{U})$  satisfies $L g(x) = 0,\, x\in U$, where $L$ does not have a potential term. Suppose that $g(x) = 0,\, x\in B_1$, and
$g(x) = 1,\, x\in B_2\bigcup....\bigcup B_n.$ 
Then there exist $C, C'>0$ such that
$$
C\leq \inf_{B_1} \frac{\partial g}{\partial \nu} \leq \sup_{B_1} \frac{\partial g}{\partial \nu}\leq C'
$$\
where $\nu$ is the interior normal vector field.

$(v)$ Suppose that  $B$ is $C^2$-smooth and $g(x)\in C(\overline{U})$ satisfies $Lg(x) = 0,\, x\in U$. Then there exists $C>0$ such that
$$
\sup_{x\in K}|\nabla g(x)|\leq \frac{C}{{\rm dist} (K, B)} \sup_{x\in \overline{U}} |g(x)|.
$$
In each of the statements, the constants $C,C'$ can be chosen in such a way that the corresponding inequality holds for all operators that have the same ellipticity constant and bound on the $C$-norm of the coefficients in $\overline{U}$.
\end{lemma}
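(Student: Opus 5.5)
These are classical facts for uniformly elliptic operators on a compact domain with smooth boundary. I would prove each by a compactness-plus-uniqueness argument on top of standard PDE estimates, tracking only the ellipticity constant and the coefficient bounds so that the uniformity clause at the end comes for free. Throughout I write $\lambda$ for the ellipticity constant and $\Lambda$ for the bound on the $C$-norm (say $C^{1,\alpha}$) of the coefficients. Let $\mathcal{L}(\lambda,\Lambda)$ be the class of such operators; by Arzelà–Ascoli it is precompact in $C^1$ on $\overline U$. Each inequality is a statement about a positive quantity depending continuously on the operator. So it suffices to show, for every operator in the closure, that the quantity is finite and positive, and then pass to the limit along a putative minimizing sequence.

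\emph{Step 1: (iii) and (v).} I would take these directly from standard theory. Statement (iii) is the Harnack inequality (Krylov–Safonov, or Moser in divergence form after absorbing the lower-order terms), applied on a finite chain of balls covering the compact set $K$ inside the connected set $U$. The constant depends only on $\lambda$, $\Lambda$, $K$ and $U$, which gives the uniformity. Statement (v) is the interior gradient estimate: Schauder estimates on balls of radius comparable to ${\rm dist}(K,B)$ give $|\nabla g|\le C r^{-1}\sup|g|$ after rescaling to the unit ball. Rescaling only improves the lower-order coefficients, so $C$ depends only on $\lambda$ and $\Lambda$.

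\emph{Step 2: (ii) and (iv).} For (ii), choose a ball $B_r(x)\subset U$ with $r\le{\rm dist}(K,B)$. Then $\mathrm{E}_x\tau\ge \mathrm{E}_x\tau_{B_r(x)}$. Apply Itô's formula to $|X_t-x|^2$, whose generator is at most $C(\Lambda)$ near $x$; this gives $r^2\le C(\Lambda)\,\mathrm{E}_x\tau_{B_r(x)}$. For (iv), the maximum principle gives $0\le g\le 1$, and since $g$ is non-constant we have $0<g<1$ in $U$ by the strong maximum principle. The Hopf lemma then gives $\partial g/\partial\nu>0$ pointwise on $B_1$, and boundary Schauder estimates in $C^{1,\alpha}$ up to the $C^2$ boundary give the upper bound $C'$. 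A uniform positive lower bound does not follow from Hopf at a single point. To get it, I would use a barrier: $g\ge c\,h$ near $B_1$, where $h$ is an explicit subsolution of the form $e^{-M d}-e^{-M\rho}$ in a collar of width $\rho$ around $B_1$, with $d$ the distance to $B_1$. The constant $c$ comes from (iii), which gives $\inf g>0$ on the inner boundary of the collar. The constants in this barrier depend only on $\lambda$, $\Lambda$ and the geometry.

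\emph{Step 3: (i), the main obstacle.} For $\tilde x\in B$, the density is $p^x(\tilde x)=\partial_{\nu} G(x,\tilde x)$ (suitably weighted by the normal part of the diffusion matrix), where $G$ is the Green's function of the adjoint problem with pole at $x$. As a function of its boundary argument, $G(x,\cdot)$ is a positive solution of the adjoint equation away from the pole that vanishes on $B$. I would therefore repeat the Hopf-plus-barrier argument of Step 2 for the adjoint operator in a collar of $B$ that avoids a neighbourhood of $K$. What remains is a uniform lower bound for $G(x,\cdot)$ on the inner boundary of the collar, uniformly in $x\in K$. This follows from the two-sided Green's function bounds of Littman–Stampacchia–Weinberger together with Harnack (iii).

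The delicate point, if the adjoint operator is not smooth enough, is to avoid requiring more than two derivatives of the coefficients. If needed I would instead argue probabilistically. Let $\sigma$ be the exit time from a ball $B_r(x)$. The hitting distribution of $B$ is an average, over the position $X_\sigma$, of harmonic measures from interior points. It is therefore enough to show that for any boundary patch $A\subset B$ of surface area $s$, the function $u(z)=\mathrm{P}_z(X_\tau\in A)$ satisfies $u\ge c\,s$ at points at distance $\rho$ from $B$. I would prove this by a local boundary comparison in flattened coordinates: approximate the operator by its constant-coefficient freezing at a point of $A$, then use (iii) to propagate the bound to $K$.
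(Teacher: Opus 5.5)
The paper does not prove this lemma. It points to standard elliptic theory for (ii)--(v) and to a sketch in \cite{Metastability} for (i). Your outline is consistent with that: It\^o's formula for (ii), Harnack chains and rescaled Schauder estimates for (iii) and (v), Hopf plus a collar barrier for (iv), and $p^x=(\nu^{T}a\nu)\,\partial_\nu G(x,\cdot)$ with the same barrier applied to the adjoint Green's function for (i).

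However, the barrier you write down is inverted, so the step on which both (iv) and (i) rest fails as stated. With $d$ the distance to $B_1$, your $h=e^{-Md}-e^{-M\rho}$ equals $1-e^{-M\rho}>0$ on $B_1$, where $g=0$, and it vanishes on the inner face $\{d=\rho\}$. So $g\ge c\,h$ is false on $B_1$, and in any case $\partial_\nu h=-M<0$ there. You have transplanted the Hopf barrier $e^{-\alpha|x-y|^2}-e^{-\alpha R^2}$, in which $|x-y|$ is the distance to the centre of an interior tangent ball, not the distance to the boundary.

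In the collar you need a function that vanishes on $B_1$ and is convex and increasing in $d$, for example $h=e^{-M(\rho-d)}-e^{-M\rho}$. Since $|\nabla d|=1$ and $d\in C^2$ in a thin collar of a $C^2$ surface, $Lh\ge e^{-M(\rho-d)}(\lambda M^2-CM)>0$ for large $M$. Then, with $m=\inf_{\{d=\rho\}}g$, the function $g-\frac{m}{1-e^{-M\rho}}\,h$ is a supersolution that is nonnegative on both faces. The minimum principle gives $\partial_\nu g\ge mMe^{-M\rho}/(1-e^{-M\rho})$ on $B_1$.

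Two smaller points concern the uniformity clause.

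First, (iii) only controls ratios, so it cannot by itself give a lower bound on $m$ that is uniform over the class of operators. For a single operator, $m>0$ is just the strong maximum principle. You need an anchor: for example, an upper barrier for $1-g$ near $B_2$ giving $g\ge 1/2$ at a fixed distance from $B_2$, followed by a Harnack chain to $\{d=\rho\}$.

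Second, in (i) the Littman--Stampacchia--Weinberger bounds are for divergence-form operators without lower-order terms. Your $L^*$ has a drift and a zeroth-order term $\partial_{ij}a^{ij}-\partial_i b^i$ of indefinite sign. The lower bound on $G(x,\cdot)$ on the inner face of the collar is easier to get from occupation times. For a neighbourhood $W$ of that face, $\int_W G(x,y)\,dy=\mathrm{E}_x\int_0^\tau\chi_W(X_s)\,ds$. This is bounded below by (ii) and the strong Markov property, because the path from $x\in K$ must cross the face before reaching $B$. Harnack for $L^*$ then converts this into a pointwise bound.

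In the barrier comparison for $L^*$, use $G\ge 0$ to move the positive part of the potential to the right-hand side, so that the minimum principle applies. With these corrections your frozen-coefficient fallback is unnecessary.
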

The statement in (i) is probably known in elliptic theory, its proof was sketched in \cite{Metastability}. The statement in (ii) is standard. The statements in (iii), (iv) and (v) are standard  in elliptic theory (see \cite{ellipticbible} and \cite{han2011elliptic}).
\\
\\
\textbf{Remark.}
Let the function $u^\varkappa(x)$ satisfy:
$$
Lu^\varkappa(x) = 0,\quad x=(y,z) \in {\rm int}\,V_{\varkappa,2\varkappa}
$$
with the operator $L$ that appears in Lemma \ref{loccoeff}. In the coordinate $\hat{x}=(y,\hat z)\in {\rm int}\,V_{\varkappa_1,2\varkappa_1}$ with $\hat{z} = \frac{\varkappa_1}{\varkappa}z$, let
$$
\hat{L} := L_y + \hat{z}^2 \alpha(y)\partial^2_{\hat{z}} + \hat{z}\beta(y)\partial_{\hat{z}} + \hat{z} \mathcal{D}_y \partial_{\hat{z}}+ \frac{\varkappa}{\varkappa_1}\hat{R}
$$
and
$$
\hat{R}:= \hat{z} \hat{\mathcal{K}}_y + \hat{z}^2 \hat{\mathcal{N}}_y \partial_{\hat{z}}+ \hat{z}^3 \sigma(y,\frac{\varkappa}{\varkappa_1}\hat{z})\partial_{\hat{z}}^2.
$$
Here, $\hat{\mathcal{K}}_y$ and $\hat{\mathcal{N}}_y$ are operators similar to $\mathcal{K}_y$ and $\mathcal{N}_y$ defined in Lemma \ref{loccoeff}, while they have the parameter $z$ replaced by $\frac{\varkappa}{\varkappa_1}\hat{z}$ in the coefficients of the operators $\mathcal{K}_y$ and $\mathcal{N}_y$, respectively. Consequently, $\hat{u}^\varkappa(\hat{x}):=u^\varkappa(y,\frac{\varkappa}{\varkappa_1}\hat{z})$ satisfies
\begin{equation}
\label{new elliptic operator}
    \hat{L}\hat{u}^\varkappa(\hat{x}) = 0,\quad \hat{x}\in {\rm int}\, V_{\varkappa_1,2\varkappa_1}.
\end{equation}
Note that the bound on the coefficients in $\frac{\varkappa}{\varkappa_1}\hat{R}$ can be made arbitrarily small by taking $\frac{\varkappa}{\varkappa_1}$ sufficiently small (e.g., fix $\varkappa_1$ and take $\varkappa$ sufficiently small). We will repeatedly apply Lemma \ref{ellipticity estimation} to (\ref{new elliptic operator}) for a fixed $\varkappa_1$ and for all sufficiently small $\varkappa$ to obtain bounds with a constant $C$ that does not depend on $\varkappa$.

\section{The structure of the invariant measure for \texorpdfstring{$X_t$}{TEXT} in a single domain}
\label{invmeas}

 Since the boundaries of the domains $D_i$, $1 \leq i \leq m+1$, are inaccessible, the unperturbed process $X_t$ starting in $D_i$ will remain in the same domain. Therefore, we can temporarily drop the subscript $i$ and focus on the behavior of $X_t$ in one domain $D = D_i$.

 Some of the surfaces $S_1,...,S_m$ serve as the components of the boundary for $D$. We will write
$\partial D = S^D_1 \bigcup ... \bigcup S^D_{m_D}$, where 
$\{ S^D_1,...,S^D_{m_D}\} \subseteq \{S_1,...,S_m\}$. For a bounded domain $D$, since the surfaces $S^D_1,...,S^D_{m_D}$ are all repelling, i.e., $\gamma^D_1,...,\gamma^D_{m_D}$ are negative, the process $X_t$ has a unique invariant measure $\mu$, as shown in Theorem 5.6 in \cite{freidlin2023perturbations}. As mentioned in Section \ref{intro}, the existence and uniqueness of an invariant measure $\mu$ in an unbounded domain $D$ follow by an argument similar to Theorem 5.6 in \cite{freidlin2023perturbations} and by Assumption (e).

In this section, the asymptotic behavior of $\mu$ near each of the repelling surfaces $S = S_i^D, \, 1\leq i \leq m_D$, is described.
The main result of this section will be obtained for each surface $S^D_i$ separately, and thus we can temporarily drop the sub- and super-scripts, and denote the surface $S = S^D_i$.

Let $L_y^*$ and $\mathcal{D}_y^*$ be the adjoint operators to $L_y$ and $\mathcal{D}_y$ with respect to the measure $\pi$ on $S$.
\begin{lemma}
\label{adjointelliptic}
There exists a unique positive function $\psi\in C^3(S)$ such that $\int_S \psi d\pi = 1$ and 
$$
    L_y^* \psi + (\gamma-1)\gamma\alpha \psi +\gamma \beta \psi + \gamma \mathcal{D}_y^* \psi = 0.
$$
\end{lemma}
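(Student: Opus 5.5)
Let me write $A_\gamma := L_y + (\gamma-1)\gamma\alpha + \gamma\beta + \gamma\mathcal{D}_y$ for the operator in (\ref{elliptic1}), acting on $C^2(S)$. The equation in the lemma reads $A_\gamma^*\psi = 0$, where $A_\gamma^*$ is the formal adjoint with respect to $\pi$. The plan is to treat this as a principal-eigenvalue problem for a non-degenerate second-order operator on a compact manifold and transfer the information from Lemma~\ref{lemmaspectral} to the adjoint via Krein--Rutman / Perron--Frobenius theory.

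\textbf{Step 1: $A_\gamma$ is uniformly elliptic on $S$.} By Assumption (a), $L_y$ is uniformly elliptic on the compact manifold $S$, and its coefficients are $C^4$. The term $\gamma\mathcal{D}_y$ is first order and $(\gamma-1)\gamma\alpha+\gamma\beta$ is a $C^4$ potential. Hence $A_\gamma$ is a uniformly elliptic operator with $C^4$ coefficients.

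\textbf{Step 2: $0$ is the principal eigenvalue of $A_\gamma$.} Let $c>0$ be large enough that $A_\gamma - c$ has negative potential. The resolvent $(c - A_\gamma)^{-1}$ is then a compact, strongly positive operator on $C(S)$, by the strong maximum principle and the Schauder estimates. By Krein--Rutman it has a simple principal eigenvalue with a positive eigenfunction, and this is the only eigenvalue with a positive eigenfunction. Lemma~\ref{lemmaspectral} provides $\varphi>0$ with $A_\gamma\varphi=0$. So $0$ is the principal eigenvalue $\lambda_1(A_\gamma)$, and it is simple.

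\textbf{Step 3: construct $\psi$ from the adjoint.} Write $\pi(dy)=p(y)\,dy$ with respect to the Riemannian volume. Here $p$ is the positive, smooth solution of $L_y^{\dagger}p=0$; its regularity comes from elliptic regularity, since the $v_i$ are $C^6$ and $S$ is $C^7$. The $\pi$-adjoint is $A_\gamma^* u = p^{-1}A_\gamma^{\dagger}(pu)$, where $A_\gamma^\dagger$ is the Lebesgue adjoint. This is again uniformly elliptic, with coefficients losing at most two derivatives relative to those of $A_\gamma$, so they are at least $C^2$. The resolvents of $A_\gamma$ and $A_\gamma^*$ are adjoint compact operators and therefore share their spectrum. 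Applying Krein--Rutman to the adjoint resolvent (equivalently, to the dual cone of positive measures) gives a positive eigenfunctional for the eigenvalue $0$, and it is unique up to scaling.

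Regularity then upgrades this functional. It is a distributional solution of $A_\gamma^*\psi=0$, so by elliptic regularity it is a $C^{2,\theta}$ function, and one more bootstrap step using the $C^4$ coefficients gives $\psi \in C^3(S)$. It is nonnegative, and the strong maximum principle (the Harnack inequality, Lemma~\ref{ellipticity estimation}(iii)) makes it strictly positive. We then normalize so that $\int_S\psi\,d\pi=1$.

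\textbf{Step 4: uniqueness.} Suppose $\psi'>0$ is another solution. Integrate against $\varphi$:
$$0=\int_S \varphi\, A_\gamma^*\psi'\, d\pi.$$
This identity alone is not enough. Instead, positive eigenfunctions of an operator whose resolvent is strongly positive and compact can only correspond to the principal eigenvalue. The eigenvalue $0$ is simple for $A_\gamma^*$, because algebraic multiplicities of the adjoint coincide with those of $A_\gamma$. Hence $\psi'$ is proportional to $\psi$, and the normalization forces $\psi'=\psi$.

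\textbf{Main obstacle.} I expect the hardest part to be the regularity bookkeeping: checking that $p$ and the adjoint coefficients are smooth enough to give $\psi\in C^3$ given the $C^4$ data of Lemma~\ref{loccoeff}. The spectral argument itself is standard once ellipticity on $S$ is in hand.
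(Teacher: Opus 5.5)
Your proposal is correct and follows the same route as the paper. The paper's proof is two lines: the operator in the lemma is the $\pi$-adjoint of the one in (\ref{elliptic1}), so it has the same principal eigenvalue $0$ and a unique, up to scaling, positive eigenfunction, which is $C^3$ because the adjoint's coefficients are $C^2$. Your Krein--Rutman, duality and regularity steps just supply the standard details the paper leaves implicit; in Step 4, the only fact needed is $\dim\ker A_\gamma^*=\dim\ker A_\gamma=1$, so the detour through integrating against $\varphi$ can be dropped.
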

\begin{proof}
The operator $L_y^* +
(\gamma-1)\gamma\alpha +\gamma \beta + \gamma \mathcal{D}^*_y$ is adjoint to the operator on the left-hand side of (\ref{elliptic1}). Thus, it has the same principal eigenvalue (equal to zero) and the unique, up to a multiplicative constant,  positive eigenfunction corresponding to this eigenvalue. We denote this eigenfunction by $\psi$ and note that $\psi \in C^3 (S)$ due to the $C^2$-smoothness of the coefficients.
\end{proof}
Recall that $M = L- R$. Let $M^*$ be the formal adjoint of $M$ in $S^\delta$ with respect to the product of the  measure $\pi$ on $S$ (for the $y$ variables) and the Lebesgue measure on the real line (for the $z$ variable).
By Lemma \ref{adjointelliptic}, the function $\psi(y)z^{-\gamma -1}$ satisfies the equation
\begin{equation}
\label{quasiadjoint}
\begin{aligned}
M^*( \psi(y) z^{-\gamma-1})= 0,~~ (y,z)\in S^\delta
\end{aligned}
\end{equation}
for sufficiently small $\delta>0$.

It was shown in \cite{freidlin2023perturbations} that the invariant measure $\mu$ (when restricted to $S^\delta$) has a density with respect to the product measure $d\pi\times dz$. We will denote this density  by $f$.  This non-negative function $f$ satisfies $\int_{S^\delta} f(y,z) L g(y,z)  d \pi(y) dz = 0,$ for any $g$ that is $C^2$-smooth and compactly supported in $S^\delta$.    Since the coefficients of the operator $L$ are smooth, it follows (see \cite{bogachev2001regularity},\cite{zhang2012regularity}) that $f$ is a strong solution to
\begin{equation}
\label{invariantmeasureoperator}
    L^*f(y,z) = 0,\quad (y,z)\in S^\delta,
\end{equation}
where $L^*$ is the formal adjoint of $L$.

The main theorem of this section describes the asymptotic behavior of $f$.
\begin{theorem}
\label{invariant measure}
There exists $c>0$ such that
$$
\lim_{\varkappa\downarrow 0} \frac{f(y,z)}{\psi(y) z^{-\gamma-1}} = c, \quad {\rm uniformly}~{\rm in}~ (y,z)\in \Gamma_\varkappa.
$$
\end{theorem}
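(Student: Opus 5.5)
The plan is to pass to logarithmic coordinates and reduce the statement to a Liouville-type result for a nearly translation-invariant elliptic adjoint equation on the cylinder $S\times\mathbb{R}$. Set $\mathbf z=\ln z$ and consider
$$g(y,\mathbf z):=\frac{f(y,e^{\mathbf z})}{\psi(y)e^{-(\gamma+1)\mathbf z}}.$$
Since $M^*(\psi z^{-\gamma-1})=0$ by (\ref{quasiadjoint}), an $h$-transform computation shows that $g$ satisfies $\widetilde K^* g + \widetilde R g=0$ on $S\times(-\infty,\ln\delta)$. Here $\widetilde K$ is an elliptic operator on $S\times\mathbb{R}$ whose coefficients do not depend on $\mathbf z$; it is the Doob transform of the adjoint of $K$, and its ellipticity comes from Assumption (c). It annihilates constants, so it has no potential term. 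The remainder $\widetilde R$ comes from $R^*$ and has coefficients of size $O(e^{\mathbf z})$, which decay exponentially as $\mathbf z\to-\infty$. The theorem is then equivalent to the claim that $g(y,\mathbf z)\to c$ uniformly in $y$ as $\mathbf z\to -\infty$, after accounting for the bounded distortion between $\Gamma_\varkappa$ and the level sets $\{z=\varkappa\}$. That distortion is handled by the continuity of $\varphi$ together with the convergence of $g$ at all nearby $\mathbf z$.

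\textbf{Step 1: a priori control of $g$.} I first want a Harnack inequality on unit windows $S\times[\mathbf z_0-1,\mathbf z_0+1]$, uniform in $\mathbf z_0$. This is the Remark after Lemma \ref{ellipticity estimation} applied to the adjoint operator, with the bounds given by Lemma \ref{ellipticity estimation}(iii),(v). I also need $0<\inf g\le\sup g<\infty$ as $\mathbf z\to-\infty$. To get two-sided bounds, I would compute the $\mu$-mass of the layer $V_{\varkappa,2\varkappa}$ via the Khasminskii cycle representation of $\mu$, using excursions between $\Gamma_{\varkappa_0}$ and a fixed interior surface. The mass of $V_{\varkappa,2\varkappa}$ equals the expected time spent there per cycle divided by the mean cycle length. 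By Lemma \ref{unperturbed transition probability}, the probability of reaching $\Gamma_{2\varkappa}$ from $\Gamma_{\varkappa_0}$ is comparable to $\varkappa^{-\gamma}$. Once there, the time spent in the layer is of order one, by Lemma \ref{unperturbed expectation exit time} and Lemma \ref{ellipticity estimation}(ii). So $\mu(V_{\varkappa,2\varkappa})\asymp\varkappa^{-\gamma}$, which matches $\int_\varkappa^{2\varkappa}z^{-\gamma-1}dz$. Combined with Harnack, this gives $g\asymp 1$.

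\textbf{Step 2: Liouville and convergence.} Consider the translates $g_n(y,\mathbf z)=g(y,\mathbf z-n)$. They are bounded and equicontinuous, and the perturbation $\widetilde R$ disappears in the limit. Any subsequential limit is therefore a bounded positive solution of $\widetilde K^* h=0$ on all of $S\times\mathbb{R}$. I would then show that such solutions are constant. Take a Fourier/Laplace transform in $\mathbf z$, or argue by ergodicity of the $y$-component. The key input is that $\gamma$ is the unique exponent for which the principal eigenvalue vanishes (Lemma \ref{lemmaspectral}, Lemma \ref{adjointelliptic}). Together with the drift of $\widetilde K$ in $\mathbf z$, this forces bounded solutions to be functions of $\mathbf z$ alone, and hence constant.

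It remains to show that the limiting constant does not depend on the subsequence. I would use a conserved flux. Integrating $L^*f=0$ against test functions depending only on $z$ near $\Gamma_\varkappa$ gives the identity that the net probability flux of $\mu$ across $\Gamma_\varkappa$ is zero. In $\mathbf z$-coordinates, this says that a weighted integral $\int_S(\ldots)\,d\pi$ of $g$ and $\partial_{\mathbf z}g$ is constant up to an error $O(e^{\mathbf z})$, which is integrable. Alternatively, the quantity $\int_S g(y,\mathbf z)\varphi(y)\psi(y)\,d\pi$ can be shown to be Cauchy as $\mathbf z\to-\infty$, because its derivative is bounded by $Ce^{\mathbf z}$. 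Either way, the limit $c$ is unique, and positivity follows from Step 1.

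\textbf{Main obstacle.} I expect the hardest part to be the Liouville step: ruling out nonconstant bounded solutions of $\widetilde K^*h=0$ on the cylinder, together with the uniqueness of the limit. If the direct route stalls, I would fall back on explicit sub- and super-solutions of the form $\psi z^{-\gamma-1}(1\pm Cz^{\theta})$, patterned on the barrier arguments behind Lemma \ref{unperturbed transition probability}, and squeeze $f$ between them.
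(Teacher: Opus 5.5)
Your Step~1 is essentially the paper's Lemma~\ref{uniform bound on g}. The translate-and-Liouville part of Step~2 is a legitimate analytic replacement for the paper's coupling proof of Lemma~\ref{g constant on interval}, with two corrections. First, the limit equation is $\widetilde K h=0$ for the Doob-transformed operator, which annihilates constants, not $\widetilde K^*h=0$. Second, the Liouville property still needs a real argument. One option is a Fourier transform in $\mathbf z$ combined with the maximum principle for $|u|^2$, which shows that $e^{i\xi\mathbf z}u(y)$ is never a solution for real $\xi\neq0$.

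The genuine gap is the uniqueness of the limiting constant. Your first route cannot work. Testing $L^*f=0$ against functions of $z$ and substituting $f=g\psi z^{-\gamma-1}$ gives
\[
\int_S\alpha\psi\,\partial_{\mathbf z}g\,d\pi+\int_S\big((1-\gamma)\alpha-\beta\big)\psi\,g\,d\pi=O(e^{\mathbf z}).
\]
However, integrating the equation of Lemma~\ref{adjointelliptic} over $S$ gives $\int_S((1-\gamma)\alpha-\beta)\psi\,d\pi=0$. So every multiple $c\,\psi z^{-\gamma-1}$ has zero flux, and the identity holds to leading order for every candidate constant. It only tells you that $\frac{d}{d\mathbf z}\int_S\alpha\psi g\,d\pi$ equals an $o(1)$ fluctuation term, which need not be integrable. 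Your second route asserts $|\frac{d}{d\mathbf z}\int_S g\varphi\psi\,d\pi|\le Ce^{\mathbf z}$ without derivation, and none is available. The relation obtained by integrating the $g$-equation over $S$ is second order in $\mathbf z$. Moreover, the decaying homogeneous modes $e^{\lambda\mathbf z}\phi_\lambda(y)$ need neither be orthogonal to $\varphi\psi$ nor satisfy $\mathrm{Re}\,\lambda\ge 1$.

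You have the right weight but the wrong quantity. Integrating the $g$-equation against $\varphi\psi\,d\pi$ is the same as applying Green's identity to $f$ and the $M$-harmonic function $h=\varphi z^{\gamma}$. Using your Schauder bounds on $g$ to control $R^*f$, this yields $\frac{d}{d\mathbf z}B=O(e^{\mathbf z})$, where
\[
B(\mathbf z)=\int_S\alpha\varphi\psi\,\partial_{\mathbf z}g\,d\pi+\int_S g\,\psi\big[\big((1-2\gamma)\alpha-\beta\big)\varphi-\mathcal D_y\varphi\big]\,d\pi .
\]
The coefficient of a constant $g$ in $B$ is $\int_S\psi[((1-2\gamma)\alpha-\beta)\varphi-\mathcal D_y\varphi]\,d\pi=-\Lambda'(\gamma)\int_S\varphi\psi\,d\pi$. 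Here $\Lambda(\lambda)$ is the principal eigenvalue of $L_y+\alpha(\lambda-1)\lambda+\beta\lambda+\lambda\mathcal D_y$. Since $\Lambda$ is convex and vanishes only at $\gamma<0$ and at $0$, we get $\Lambda'(\gamma)<0$, so the coefficient is nonzero. Your Liouville step gives $g-c(\mathbf z)\to0$ and $\partial_{\mathbf z}g\to0$ uniformly. The convergence $B\to B_0$ then forces $c(\mathbf z)\to B_0/(-\Lambda'(\gamma)\int_S\varphi\psi\,d\pi)$, which fixes the constant.

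The paper's proof of Lemma~\ref{g uniform constant} uses the same mechanism in a weak, comparison form. Its test functions $\zeta^\varkappa_\eta$ are $L$-harmonic and of size $z^\gamma$ between $\Gamma_\varkappa$ and $\{z=t\}$ (Lemma~\ref{zeta property}(b)). So its duality argument effectively pairs $f$ with something like $\varphi z^{\gamma}$, not with constants.
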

With $\psi(y) z^{-\gamma-1}$ being the solution to $M^*$, 
the discrepancy between $\psi(y) z^{-\gamma-1}$ and $f$ is due to the presence of the term
$R^*$ (the formal adjoint to $R$) in $L^*$. Let us write
$$
f(y,z) = g(y,z)\psi(y)z^{-\gamma-1}
$$
with some unknown positive function $g(y,z)$. From (\ref{quasiadjoint}) and  (\ref{invariantmeasureoperator}), it follows that $g$ satisfies an equation with an operator that has a form similar to the operator $L$ that appears in Lemma \ref{loccoeff}. Some of the coefficients are different, though, and there is an additional small potential term. The equation that $g$ satisfies can be explained using the following operators. Let
$$
\overline{L}_y g := \psi^{-1}(y)(L_y^* (\psi g)- g L_y^* \psi).
$$
By Lemma~\ref{adjointelliptic}, this is a second-order elliptic operator on $S$ without a potential term and the coefficients depend only on $y$. Next, let
\begin{equation} 
\label{defbarm}
\overline{M} g := \overline{L}_y g + \alpha(y) z^2 \frac{\partial^2 g}{\partial z^2} + (2(-\gamma+1) \alpha(y)- \beta(y)+\frac{\mathcal{D}_y^* \psi(y)}{\psi(y)}) z \frac{\partial g}{\partial z} + z \mathcal{D}_y\frac{\partial g}{\partial z}.
\end{equation}
This operator has a similar structure to $M$ (a sum of an elliptic operator in the $y$ variables and a term that is homogeneous in the $z$ variables). Let $\overline{T} g $ be the potential term in the operator ${z^{\gamma+1}\psi^{-1}(y)}R^*(g(y,z)\psi(y)z^{-\gamma-1})$. It has the form
$$
\overline{T} g = z T(y,z) g(y,z)
$$
with $T(y,z)\in C^1(S^\delta)$. The remainder  will be denoted by $\overline{R}$:
\[
\overline{R} g := {z^{\gamma+1}\psi^{-1}(y)}R^*(g(y,z)\psi(y)z^{-\gamma-1}) - \overline{T}  g.
\]
By the above definitions, from (\ref{invariantmeasureoperator}), it follows that  $g$ satisfies
\begin{equation} 
\label{eqnbsr}
\overline{L}g := \overline{M}g + \overline{R}g + \overline{T} g = 0,
\end{equation}
where $\overline{M}$ can be written similarly to $M$:
\[
\overline{M} u = \overline{L}_y u + z^2 \overline{\alpha}(y)\frac{\partial^2 u}{\partial z^2} + z\overline{\beta}(y)\frac{\partial u }{\partial z} + z \overline{\mathcal{D}}_y \frac{\partial u}{\partial z}.
\]
Note that $\overline{\alpha} = \alpha$, while $\overline{\beta}$ and $\overline{\mathcal{D}}_y$ can be found in (\ref{defbarm}). Note that the coefficients of the operator $\overline{M}$ are in $C^1(S^\delta)$. From our construction it follows that $\overline{R}$ has similar properties to the operator $R$ in Lemma~\ref{loccoeff}. Namely,
$\overline{R} u = z \overline{\mathcal{K}}_y u + z^2 \overline{\mathcal{N}}_y \frac{\partial u}{\partial z} + z^3 \overline{\sigma}(y,z)\frac{\partial^2 u}{\partial z^2}$, where $\overline{\mathcal{K}}_y$ is a differential operator on $S^\delta$ with
first- and second-order derivatives in $y$, $\overline{\mathcal{N}}_y$ is a differential operator on $S^\delta$ with first-order derivatives in $y$ and a potential term. All the operators have $C^1$ coefficients, while $\overline{\alpha},\overline{\beta}\in C^1(S),\,\overline{ \sigma}\in C^1(S^\delta)$. Let $\overline{X}_t= (\overline{Y}_t,\overline{Z}_t)$ be the diffusion generated by the operator $\overline{M}+\overline{R}$.

Now, to prove Theorem \ref{invariant measure}, it is sufficient to show  that $g$ is asymptotically constant near the surface $S$. This is achieved in the following steps.
First, we show that $g$ satisfies a uniform bound in a neighborhood of $S$.
\begin{lemma}
\label{uniform bound on g}
There are positive constants $c$ and $C$ such that
\begin{equation} 
\label{boundsong}
c\leq g(y,z)\leq C
\end{equation}
for a sufficiently small $\varkappa$ and for all $(y,z)\in V_{0,\varkappa} \setminus S$.
\end{lemma}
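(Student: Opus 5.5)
We will prove the two-sided bound on $g$ by combining a Harnack-type argument on the dyadic layers $V_{\varkappa',2\varkappa'}$ with a global mass/flux argument, using $\overline{L}g=0$ from (\ref{eqnbsr}).

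\textbf{Step 1: Harnack within a layer.} Rescale $V_{\varkappa',2\varkappa'}$ to a fixed layer $V_{\varkappa_1,2\varkappa_1}$ as in the Remark following Lemma~\ref{ellipticity estimation}. Under this rescaling the operator $\overline{M}$ is invariant, since it is homogeneous in $z$. The perturbations $\overline{R}$ and $\overline{T}$ acquire coefficients of size $O(\varkappa'/\varkappa_1)$; in particular the potential $zT$ becomes small. Part (iii) of Lemma~\ref{ellipticity estimation} is stated for operators with a possible potential term and constants uniform in the ellipticity and coefficient bounds. Applied to slightly larger layers $V_{\varkappa'/2,4\varkappa'}$, it gives a constant $C_0$, independent of $\varkappa'$, such that
\[
\sup_{V_{\varkappa',2\varkappa'}} g \le C_0 \inf_{V_{\varkappa',2\varkappa'}} g .
\]
Ellipticity of $\overline{M}$ in the variables $(y,\ln z)$ follows from Assumption (c), because $\overline{\alpha}=\alpha$ and the second-order part is that of $L^*$. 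So it suffices to show that the numbers $m(\varkappa'):=\inf_{\Gamma_{\varkappa'}} g$ stay bounded above and below as $\varkappa'\downarrow 0$.

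\textbf{Step 2: control across layers.} I would use the flux identity for the invariant density. Take $h=h(y,z)=\varphi(y)z^{\gamma}$, or a small correction of it, which is nearly $L$-harmonic. Integrate $\int f\,Lh$ over $V_{\varkappa_a,\varkappa_b}$ and apply Green's formula. Since $L^*f=0$, the boundary terms on $\Gamma_{\varkappa_a}$ and $\Gamma_{\varkappa_b}$ balance up to the bulk error $\int f\,Rh$.

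Now substitute $f=g\psi z^{-\gamma-1}$ and $h=\varphi z^{\gamma}$. The boundary flux through $\Gamma_{\varkappa'}$ is then of the form
\[
\int_S g\,\bigl(c_1(y)+o(1)\bigr)\,d\pi ,
\]
with $c_1>0$ coming from the pairing of $\psi$ and $\varphi$. All powers of $z$ cancel, which is exactly why the exponent $-\gamma-1$ was chosen.

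The bulk error is $\int f\,|Rh|\lesssim \int_{\varkappa_a}^{\varkappa_b} \bar g(z)\,z^{-\gamma-1}z^{\gamma+1}\,\frac{dz}{z}=\int \bar g\,dz$, where $\bar g$ denotes the layer average of $g$. By Step 1, $\bar g$ is comparable to the layer infimum. This yields a discrete Gronwall inequality of the form
\[
F(\varkappa_a)\le F(\varkappa_b)\,(1+C\varkappa_b)+C\sum_j 2^{-j}\varkappa_b\,F(2^{-j}\varkappa_b).
\]
Here $F$ is the flux, and it is comparable to $m$. Since $\sum 2^{-j}$ converges, iterating the inequality shows that $F$, and hence $m$, stays bounded above and below by multiples of $F(\varkappa)$ for a fixed small $\varkappa$.

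The value $F(\varkappa)$ is finite and positive because $f$ is a positive, continuous density on the fixed layer $\Gamma_\varkappa$. Positivity of $f$ there follows from the non-degeneracy (b) in $D$ and the strong maximum principle.

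\textbf{Main obstacle.} The hard step is Step 2: justifying the Green identity up to $\Gamma_{\varkappa_a}$ and making the flux comparison rigorous. Boundary terms involve normal derivatives of $g$. These can be controlled by the gradient bound in part (v) of Lemma~\ref{ellipticity estimation}, applied after rescaling, so that $|\nabla_{(y,\ln z)} g|\le C\sup_{\rm layer} g$; this makes the flux comparable to $m(\varkappa')$ with constants independent of $\varkappa'$.

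If this route proves too delicate, I would instead use an alternative based on a probabilistic representation of $\mu$ via excursions between $\Gamma_\varkappa$ and $\Gamma_{2\varkappa}$. Lemma~\ref{unperturbed transition probability} and Lemma~\ref{unperturbed expectation exit time} give the occupation measure of layers near $S$, and this directly bounds $\int_{V_{\varkappa',2\varkappa'}} f$ by a constant times $(\varkappa')^{-\gamma}$. Together with Step 1, this yields (\ref{boundsong}).
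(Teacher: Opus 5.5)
\emph{Gap in Step 2: the flux is not shown to be comparable to $m$ from below.}

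Your Step 1 is what the paper does: Harnack, part (iii) of Lemma~\ref{ellipticity estimation}, after the rescaling Remark. Step 2, however, is a genuinely different, analytic route. The paper instead fixes $\varkappa_2$ and compares $\mu(V_{\varkappa_1/2,\varkappa_1})$ with $\mu(V_{\varkappa_2,2\varkappa_2})$ through Khasminskii's cycle formula (\ref{meas1a}). It then proves $\sup_{\Gamma_{\varkappa_1}}\mathrm{E}_xT_1\le C$ and $\inf_{\Gamma_{\varkappa_2}}\mathrm{E}_xT_2\ge c\,\varkappa_1^{\gamma}$, plus the reverse bounds, via Lemmas~\ref{unperturbed transition probability}, \ref{unperturbed expectation exit time} and (ii) of Lemma~\ref{ellipticity estimation}. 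Your fallback paragraph points toward this argument but leaves out exactly these estimates.

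As written, Step 2 has a gap at the point you yourself flag. Green's formula on $V_{\varkappa_a,\varkappa_b}$ produces on $\Gamma_{\varkappa'}$ the boundary term $\int_{\Gamma_{\varkappa'}}\bigl[f\,(A\nabla h)\cdot n+h\,J\cdot n\bigr]dS$, where $J=bf-\nabla\cdot(Af)$ is the probability current. The second piece contains $z\partial_z g$ and $\nabla_y g$, with weights of the same order in $z$ as the main term. Part (v) of Lemma~\ref{ellipticity estimation} bounds these by $C\sup g$, which gives only $|F|\le C\,m$. It does not give $F\ge c\,m$, because terms of the same order can cancel the main one. Yet $F\gtrsim m$ is exactly what you need, both inside the Gronwall step and to turn a bound on $F$ into an upper bound on $m$. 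For the same reason, positivity of $F(\varkappa)$ does not follow from $f>0$ once derivative terms are present.

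\emph{The missing cancellation, and what the repaired route gives.}

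The missing idea is a cancellation, and it requires $h=\varphi z^{\gamma}$ exactly, with no correction. By the definition of the layers, $h\equiv(\varkappa')^{\gamma}$ on $\Gamma_{\varkappa'}$. So the second piece equals $(\varkappa')^{\gamma}\Phi$, where $\Phi=\int_{\Gamma_{\varkappa'}}J\cdot n\,dS$.
\begin{itemize}
\item Since $\nabla\cdot J=-L^*f=0$, $\Phi$ does not depend on $\varkappa'$.
\item $\Phi=0$. Apply Dynkin's formula to a $C^2$ function $u$ equal to $1$ on $V_{0,\varkappa''}$ and to $0$ off $V_{0,\varkappa'}$. This is legitimate because $S$ is inaccessible and $Lu$ has compact support in $D$. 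Integrating against $\mu$ gives $\int f\,Lu=0$, and integrating by parts gives $\Phi=0$.
\end{itemize}
Then $F(\varkappa')=\int_{\Gamma_{\varkappa'}}f\,(A\nabla h)\cdot n\,dS=\gamma\int_S g\,c_1\,(1+O(\varkappa'))\,d\pi$, with $c_1>0$ by Assumption (c). No derivatives of $g$ appear, and your Gronwall argument closes. Also, $Rh=O(z^{\gamma+1})$, so $f|Rh|\lesssim g$ against $d\pi\,dz$; the $dz/z$ in your display is a slip.

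Repaired this way, your route avoids Khasminskii's formula and all excursion-time estimates. It also yields more: $|F(\varkappa_b)-F(\varkappa_a)|\lesssim\varkappa_b$ shows that $F(\varkappa')$ converges. Combined with Lemma~\ref{g constant on interval}, this gives Theorem~\ref{invariant measure} without the test-function argument of Lemma~\ref{g uniform constant}. The paper's probabilistic route, in turn, produces the occupation-time bounds that it reuses in the proof of Theorem~\ref{proportion exit probability}.
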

\begin{proof}
Recall that the density of $\mu$ has the  expression $f(y,z)= g(y,z)\psi(y)z^{-\gamma-1}$. The proof of the lemma is based on relating the invariant measure to the occupation times of the process $X_t$. Namely, let $0 < \varkappa_1 < \varkappa_2$. Consider excursions by the process $X_t$ between $\Gamma_{\varkappa_1}$ and $\Gamma_{\varkappa_2}$. By observing the position of $X_t$ at the times of its successive returns to $\Gamma_{\varkappa_1}$ (after visiting $\Gamma_{\varkappa_2}$), we obtain a Markov chain on $\Gamma_{\varkappa_1}$. Similarly, by  
observing the position of $X_t$ at the times of  its successive returns to $\Gamma_{\varkappa_2}$ (after visiting $\Gamma_{\varkappa_1}$), we obtain a Markov chain on $\Gamma_{\varkappa_2}$. Let $\nu_1$ and $\nu_2$ be the invariant measures for these Markov chains.

For the process $X_t$ starting at $x \in \Gamma_{\varkappa_1}$,
let $T_1$ be the occupation time for $V_{\frac{\varkappa_1}{2}, \varkappa_1}$ during one excursion, i.e, 
\[
T_1 = \int_0^{\tau_1}
\chi_{V_{\frac{\varkappa_1}{2}, \varkappa_1}}(X_t) dt,
\]
where $\tau_1$ is the first time when the process returns to $\Gamma_{\varkappa_1}$ after visiting $\Gamma_{\varkappa_2}$. Similarly, for the processes starting at $\Gamma_{\varkappa_2}$, let $T_2$ be the occupation time for $V_{\varkappa_2, 2 \varkappa_2}$ during one excursion. By the Hasminskii formula (see \cite{khas1960ergodic}), 
\begin{equation} \label{meas1a} \mu(V_{\frac{\varkappa_1}{2}, \varkappa_1})
 =
\frac{\int_{\Gamma_{\varkappa_1}}\mathrm{E}_x T_1 d \nu_1(x)}{\int_{\Gamma_{\varkappa_2}}\mathrm{E}_x T_2 d \nu_2(x)} 
\mu(V_{\varkappa_2, 2\varkappa_2}).
\end{equation}
For the left-hand side, by the definition of $g$, 
$$
c \varkappa_1^{-\gamma}\inf_{V_{\frac{\varkappa_1}{2},\varkappa_1}} g  \leq 
\mu(V_{\frac{\varkappa_1}{2}, \varkappa_1}) \leq c \varkappa_1^{-\gamma} \sup_{V_{\frac{\varkappa_1}{2},\varkappa_1}} g ,
$$
where $c = \int_{V_{\frac{1}{2}, 1}} z^{-\gamma -1} \psi(y) d \pi(y) dz.$ 
Since $g$ is a non-negative and smooth function satisfying $\overline{L} g = 0$, by $(iii)$ of Lemma \ref{ellipticity estimation} and the last Remark in Section \ref{procbeh}, there exists $C >0$ such that 
\[
\sup_{V_{\frac{\varkappa_1}{2},\varkappa_1}} g \leq C \inf_{V_{\frac{\varkappa_1}{2},\varkappa_1}} g
\]
for all sufficiently small $\varkappa_1$.
Therefore, there exist $k_1,k_2>0$ such that
\begin{equation}
\label{relation of mu and g}
k_1 \varkappa_1^{-\gamma}\sup_{V_{\frac{\varkappa_1}{2},\varkappa_1}} g  \leq 
\mu(V_{\frac{\varkappa_1}{2}, \varkappa_1}) \leq k_2 \varkappa_1^{-\gamma} \inf_{V_{\frac{\varkappa_1}{2},\varkappa_1}} g.
\end{equation}
Thus, the upper and lower bounds on $g$ will follow if we provide the appropriate bounds on the right-hand side of~(\ref{meas1a}) for a fixed $\varkappa_2$ and for all sufficiently small $\varkappa_1$. Let us clarify the selection of $\varkappa_2$: $\varkappa_2$ is sufficiently small such that 

-the estimate on the transition probability with the boundary $\Gamma_{\varkappa_1}\bigcup \Gamma_{2\varkappa_2}$ in Lemma \ref{unperturbed transition probability} is valid with some small but fixed $\eta$, for all $\varkappa_1$ sufficiently small. More precisely, $\varkappa_2$ is set so that the estimates of the transition probabilities (\ref{excursion transition probability}) and (\ref{excursion transition small}) (below) hold.

-$(ii)$ in Lemma \ref{ellipticity estimation} holds for $K=\Gamma_{\frac{3}{2}\varkappa_2},\, U={\rm int} V_{\varkappa_2,2\varkappa_2}$.

In the following proof, $\varkappa_1$ is taken sufficiently small according to a fixed $\varkappa_2$ satisfying the above properties.

Since $\mu(V_{ {\varkappa_2}, 2\varkappa_2})$ is fixed, to provide an upper bound on the ratio of the integrals in 
(\ref{meas1a}), it is sufficient to estimate 
$\inf_{x\in \Gamma_{\varkappa_2}}\mathrm{E}_x T_2$ from below and $\sup_{x\in \Gamma_{\varkappa_1}} \mathrm{E}_x T_1$ from above. (the lower bound can be obtained similarly).

Let us consider a part of the occupation time to get the lower bound on $\inf_{x\in \Gamma_{\varkappa_2}}\mathrm{E}_x T_2$. Namely, by Lemma \ref{unperturbed transition probability} and the proper selection of $\varkappa_2$, there exists $C_1 >0$ such that
\begin{equation}
\label{excursion transition probability}
\mathrm{P}_x(X_{\tau(\Gamma_{\varkappa_1}\bigcup\Gamma_{\frac{3\varkappa_2}{2}})}\in \Gamma_{\varkappa_1}) \leq C_1 \varkappa_1^{-\gamma},~~x\in \Gamma_{\varkappa_2},
\end{equation}
for all $\varkappa_1$ sufficiently small. By $(ii)$ in Lemma \ref{ellipticity estimation} and the proper selection of $\varkappa_2$, there exists $C_2>0$ such that
$$
\mathrm{E}_x \tau(\Gamma_{\varkappa_2}\bigcup \Gamma_{2\varkappa_2}) \geq C_2,\, x\in \Gamma_{\frac{3\varkappa_2}{2}}.
$$
Therefore, considering a possible excursion from $x\in \Gamma_{\varkappa_2}$ to $\Gamma_{\frac{3 \varkappa_2}{2}}$ and back to $\Gamma_{\varkappa_2}$, using the strong Markov property, we get
\[
\inf_{x\in \Gamma_{\varkappa_2}}\mathrm{E}_x T_2 \geq (1- C_1 \varkappa_1^{-\gamma}) ( C_2 + \inf_{x\in \Gamma_{\varkappa_2}}\mathrm{E}_x T_2).
\]
Thus, there exists $C_3>0$ such that
$$
\inf_{x\in \Gamma_{\varkappa_2}} \mathrm{E}_x T_2 \geq C_3 \varkappa_1^\gamma
$$
for all $\varkappa_1$ sufficiently small. For the upper bound on $\sup_{x\in\Gamma_{\varkappa_1}} \mathrm{E}_xT_1$, 
similarly, by Lemma~\ref{unperturbed transition probability} and the proper selection of $\varkappa_2$, there exists $C_4>0$ such that 
\begin{equation}
\label{excursion transition small}
\mathrm{P}_x(X_{\tau(\Gamma_{\varkappa_1}\bigcup\Gamma_{\varkappa_2})}\in \Gamma_{\varkappa_2}) \geq C_4,~~x\in \Gamma_{2\varkappa_1},
\end{equation}
for all $\varkappa_1$ sufficiently small. Observe that $\tau(\Gamma_{2\varkappa_1}),\,x\in \Gamma_{\varkappa_1}$ is larger than the  occupation time of $V_{\frac{\varkappa_1}{2},\varkappa_1}$ during the time interval $[0,\tau(\Gamma_{2\varkappa_1)}]$. By Lemma \ref{unperturbed expectation exit time}, there exists $C_5 >0$ such that
$$
\max_{x\in \Gamma_{\varkappa_1}}\mathrm{E}_x \tau(\Gamma_{2\varkappa_1}) \leq C_5
$$
for all $\varkappa_1$ sufficiently small. Therefore, considering the transition from $x\in \Gamma_{\varkappa_1}$ to $\Gamma_{2 \varkappa_1}$ and, possibly back, using the strong Markov property, we obtain
\[
\sup_{x\in \Gamma_{\varkappa_1}}\mathrm{E}_x T_1 \leq C_5 + (1- C_4)  \sup_{x\in \Gamma_{\varkappa_1}}\mathrm{E}_x T_1.
\]
Therefore, there exists $C_6>0$ such that
\[
\sup_{x\in \Gamma_{\varkappa_1}}\mathrm{E}_x T_1 \leq C_6
\]
for all $\varkappa_1$ sufficiently small. The upper bound on $g$ now follows from (\ref{meas1a}) and (\ref{relation of mu and g}). As we mentioned, the lower bound can be obtained similarly.
\end{proof}
The next Lemma states that $g$ is asymptotically constant on the layers $V_{\varkappa, r \varkappa}$, as $\varkappa \downarrow 0$, where $r \geq 1$ is an arbitrary fixed number.
\begin{lemma} 
\label{g constant on interval}
Suppose that $r \geq 1$ is an arbitrary fixed number. Then the function
$g$ satisfies the property
$$
\lim_{\varkappa\downarrow 0}\frac{\sup_{x \in V_{\varkappa, r \varkappa}} g(x)}{\inf_{x \in V_{\varkappa, r \varkappa}} g(x)} = 1.
$$
\end{lemma}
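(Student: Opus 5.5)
The plan is a blow-up (rescaling) argument combined with a Liouville-type theorem for the limiting "homogeneous" operator $\overline{M}$. We argue by contradiction. Suppose the ratio does not tend to $1$. By Lemma~\ref{uniform bound on g} we have $c\le g\le C$ on $V_{0,\varkappa_0}\setminus S$, so the ratio tends to $1$ if and only if the oscillation of $g$ over $V_{\varkappa,r\varkappa}$ tends to $0$. Hence there are $\varkappa_n\downarrow 0$ and $\delta_0>0$ with $\mathrm{osc}_{V_{\varkappa_n,r\varkappa_n}} g\ge \delta_0$.

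\textbf{Rescaling and compactness.} Define $g_n(y,\hat z)=g(y,\varkappa_n\hat z)$ for $(y,\hat z)$ with $\hat z\in(0,\varkappa_0/\varkappa_n)$. As in the Remark at the end of Section~\ref{procbeh}, $g_n$ solves
\[
\overline{M}g_n+\varkappa_n\hat{\overline{R}}g_n+\varkappa_n\hat{\overline T}g_n=0 .
\]
Here $\overline M$ is unchanged by the scaling because it is homogeneous in $z$. The operators $\hat{\overline{R}},\hat{\overline T}$ have coefficients bounded uniformly on each compact $\hat z$-range, and the potential term is $O(\varkappa_n)$. On every fixed set $S\times[a,b]$ with $0<a<b$, these operators are uniformly elliptic with uniformly bounded $C^1$ coefficients, and $c\le g_n\le C$. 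Interior Schauder (or $W^{2,p}$ plus H\"older) estimates give uniform $C^{2,\alpha'}$ bounds on compacts. By Arzel\`a--Ascoli and a diagonal argument, a subsequence converges in $C^2_{loc}(S\times(0,\infty))$ to a function $h$ with $c\le h\le C$ and $\overline M h=0$ on $S\times(0,\infty)$. Moreover $\mathrm{osc}_{V_{1,r}}h\ge\delta_0$; the sets $V_{\varkappa,r\varkappa}$ rescale exactly to $V_{1,r}$ since $\varphi^{1/\gamma}z$ is linear in $z$. It therefore suffices to prove that $h$ is constant.

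\textbf{Liouville step (main obstacle).} In the variable $\mathbf z=\ln \hat z$, the operator $\overline M$ becomes $\overline K=\overline L_y+\alpha\partial^2_{\mathbf z}+\overline\beta\partial_{\mathbf z}+\overline{\mathcal D}_y\partial_{\mathbf z}$. This operator is elliptic on $S\times\mathbb R$, as for $K$ (Assumption (c)), has no potential term, and has coefficients independent of $\mathbf z$. We must show that bounded $\overline K$-harmonic functions on the cylinder $S\times\mathbb R$ are constant.

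The first approach I would try is probabilistic. Let $\overline X_t=(\overline Y_t,\mathbf Z_t)$ be the diffusion generated by $\overline K$; it is non-explosive since $S$ is compact and the coefficients are bounded. Then $h(x)=\mathrm E_x h(\overline X_t)$ for all $t$, so it suffices to show that
\[
\|\mathrm{Law}_x(\overline X_t)-\mathrm{Law}_{x'}(\overline X_t)\|_{TV}\to 0
\]
for any two starting points $x,x'$. Using ellipticity and the parabolic Harnack inequality, the time-one transition densities from any two points at $\mathbf z$-distance at most $\ell$ overlap with mass at least $\theta(\ell)>0$. The key issue is to control the $\mathbf z$-separation after a failed coupling attempt. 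Because the coefficients do not depend on $\mathbf z$, the difference $\mathbf Z_t-\mathbf Z'_t$ of the two copies can be driven, for example via a reflection-type coupling in $\mathbf z$ once the $y$-components are coupled on the compact $S$, so that it returns to a bounded set infinitely often. This yields infinitely many independent chances to couple, hence successful coupling and constancy of $h$.

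An alternative analytic route avoids coupling and uses the translation invariance directly. Let $M=\sup h$ and choose $x_k$ with $h(x_k)\to M$. The translates $h(\cdot,\cdot+\mathbf z_k)$ converge, by the same Schauder compactness, to a harmonic $h^*$ attaining its maximum $M$ at an interior point, so $h^*\equiv M$ by the strong maximum principle. Converting this into $h\equiv M$ requires combining it with a Harnack-chain oscillation-decay argument for $M-h$ along the cylinder. I expect this Liouville step to be the genuinely delicate part; everything else is standard elliptic compactness.

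\textbf{Conclusion.} Once $h$ is constant, $\mathrm{osc}_{V_{1,r}}h=0$, which contradicts $\mathrm{osc}_{V_{1,r}}h\ge\delta_0$. Since $\inf g\ge c>0$, the vanishing of the oscillation gives $\sup_{V_{\varkappa,r\varkappa}}g/\inf_{V_{\varkappa,r\varkappa}}g\to1$.
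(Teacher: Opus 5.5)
Your proof is correct in outline but takes a different route from the paper. The paper never passes to a limit. It works with the process $\overline{X}_t$ generated by $\overline{M}+\overline{R}$ at scale $\varkappa$, and proceeds as follows:
\begin{itemize}
\item It uses the sign of $\overline{\gamma}$ (Lemma~\ref{unperturbed transition probability} or Lemma~\ref{attracting transition probability} transferred to $\overline{X}_t$, with a separate sketch for $\overline{\gamma}=0$). This makes the process cross the $N$ geometric layers $\Gamma_{r\varkappa e^{N-k}}$ in a definite direction with probability at least $1-\delta_1$.
\item At each layer it extracts a coupling chance $C$, uniform in $\varkappa$, from Lemma~\ref{ellipticity estimation}(i) and the rescaling Remark.
\item It iterates $a_k\ge C(1-\delta_1)+(1-C-\delta_1)a_{k-1}$, so that the exit laws from any two points of $V_{\varkappa,r\varkappa}$ are nearly equal.
\item It removes the potential $\overline{T}$ via Feynman--Kac and a small exponential moment of the exit time.
\end{itemize}
That is in effect a quantitative Liouville argument carried out on the pre-limit process.

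Your blow-up buys real simplifications. $\overline{R}$ and $\overline{T}$ vanish in the limit, so you need neither the Feynman--Kac step nor transition estimates for $\overline{X}_t$. There is also no case split on $\overline{\gamma}$, so the neutral case, which the paper only sketches, is not special. The price is Schauder compactness plus a Liouville theorem for the $\mathbf z$-translation-invariant operator $\overline{K}$ on $S\times\mathbb{R}$. That theorem carries all the content, and you only sketch it.

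The Liouville theorem is true, but your sketch should be tightened.

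\emph{Coupling route.} The phrase ``once the $y$-components are coupled'' needs justification: independent copies of $\overline{Y}_t$ on $S$ need not meet when $\dim S\ge 2$. First force $Y=Y'$ using a minorization of the $y$-marginal kernel. Then keep $Y=Y'$ by synchronous coupling, and reflect only the component of the driving noise orthogonal to the $y$-noise; this component is nonzero by ellipticity. Since the $\mathbf z$-coefficients depend only on $y$, $\mathbf Z_t-\mathbf Z'_t$ becomes a driftless martingale with quadratic-variation rate bounded below. It therefore hits $0$, and the coupling is complete without repeated attempts.

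\emph{Analytic route.} This also closes with a small change: apply the translation trick to
\[
u_a(y,\mathbf z)=h(y,\mathbf z+a)-h(y,\mathbf z)
\]
rather than to $h$; $u_a$ is again bounded and $\overline{K}$-harmonic. Translate along a sequence on which $u_a\to m:=\sup u_a$ and pass to the limit. This gives a bounded harmonic $h^*$ with $h^*(\cdot,\cdot+a)-h^*\equiv m$ by the strong maximum principle. Hence $h^*(y,na)=h^*(y,0)+nm$ for all $n$, so $m\le 0$; symmetrically $\inf u_a\ge 0$. Thus $h$ does not depend on $\mathbf z$, so $\overline{L}_y h=0$ on the compact surface $S$, and $h$ is constant.
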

\proof
Recall that $g$ satisfies (\ref{eqnbsr}) and that $\overline{X}_t$ is the process with the generator $\overline{M} + \overline{R}$. Note that the generator of $\overline{X}_t$  has a similar form to the generator of $X_t$ (which is $M +R$). 
As for the process $X_t$, we define
$$\overline{\alpha}^{avg}= \int_S \overline{\alpha} d\overline{\pi},~~\overline{\beta}^{avg} =  \int_S \overline{\beta} d\overline{\pi},
$$
where $\overline{\pi}$ is the invariant measure for the process $\overline{X}_t$ on $S$. Using Lemma~\ref{lemmaspectral} for the process $\overline{X}_t$, we get the constant $\overline{\gamma} < 0$ ($\overline{\gamma}>0$) if $\overline{\alpha}^{avg}<\overline{\beta}^{avg}$ ($\overline{\alpha}^{avg}>\overline{\beta}^{avg}$). We define  $\overline{\gamma} = 0$ if $\overline{\alpha}^{avg}=\overline{\beta}^{avg}$. Let us prove the result in the case when $\overline{\gamma} < 0$ and then indicate the minor changes that are needed for the other two cases.

The main idea of the proof is
to view $g$ as a solution to an elliptic equation with the boundary data provided by $g$ itself, which we know to satisfy (\ref{boundsong}). 
The equation can be solved  with the aid of the Feynman-Kac formula, and the processes starting at two nearby points can be coupled, with high probability, using (i) in Lemma~\ref{ellipticity estimation}. Let us now list some notation and the rules to choose parameters, before proceeding with a detailed argument. 

For two sub-probability measures $\mu_1, \mu_2$ defined on the same metric space $M$, let $\rho(\mu_1, \mu_2)$ be the largest constant such that there is a positive measure $\nu$ with $\nu(M) = \rho(\mu_1, \mu_2)$ such that $\mu_1 - \nu$, $\mu_2 - \nu$ are positive measures.

Recall that $\overline{X}_t = (\overline{Y}_t, \overline{Z}_t)$ is the process generated by $\overline{M} + \overline{R}$. Similarly to the last Remark in Section \ref{procbeh}, for all $\varkappa$ sufficiently small, let $\hat{X}_t^{\varkappa}$ be the process obtained from $\overline{X}_t$ in $V_{re^{-2}\varkappa, r\varkappa}$ after changing the variables $z \rightarrow z/\varkappa$. Note that $\hat{X}_t^{\varkappa}$ is defined in the domain $V_{re^{-2}, r}$. Therefore, the stochastic differential equation for $\hat{X}_t^{\varkappa}$ has uniformly bounded and non-degenerate coefficients for all $\varkappa$ sufficiently small. Let $\hat{\mu}^{x,\varkappa}$ be the measure induced by $\hat{X}^{\varkappa}_{\tau(\Gamma_{re^{-2}}\bigcup\Gamma_{r})}$ on $\Gamma_{r}$ (this is a sub-probability measure). The process is assumed to start at $x$.

By $(i)$ in Lemma \ref{ellipticity estimation}, there exists $C>0$ such that
\begin{equation}
\label{coupled measure}
\rho(\hat{\mu}^{x_1,\varkappa},\hat{\mu}^{x_2,\varkappa})\geq C,\quad  x_1,x_2 \in \Gamma_{re^{-1}},
\end{equation}
for all $\varkappa$ sufficiently small. Let $\varkappa_0$ be such that (\ref{coupled measure}) holds uniformly for all $0<\varkappa<\varkappa_0$ with the same constant $C$. Note that $C\in (0,1)$. Consider $\delta_1\in (0,1)$, which will be specified later.

By Lemma~\ref {unperturbed transition probability}, there exists $N_1$ that depends on $\delta_1$ such that, for all $N\geq N_1$, we have 
\begin{equation} \label{goingritht} \mathrm{P}_x(\overline{X}_{\tau(\Gamma_{\varkappa e^{-N}}\bigcup \Gamma_{r \varkappa e^N})}\in \Gamma_{r \varkappa e^N})\geq 1 -\delta_1,\quad  x\in V_{\varkappa,r\varkappa e^N},
\end{equation}
for all $\varkappa$ sufficiently small. (The application of Lemma~\ref{unperturbed transition probability} is not completely straightforward, since the process at hand is now $\overline{X}_t$ rather than $X_t$, and therefore Lemma~\ref{unperturbed transition probability}  allows us to control the transition probabilities between the layers $\overline{\Gamma}_\varkappa$ for different $\varkappa$, rather than between the layers  $\Gamma_\varkappa$. However, we can use the fact that $\Gamma_\varkappa$ lies between $\overline{\Gamma}_{\varkappa/a}$ and $\overline{\Gamma}_{\varkappa a}$ for sufficiently large $a$ and for all $\varkappa > 0$.)

Recall that $g$ solves 
\[
\overline{L}g = \overline{M}g + \overline{R}g + \overline{T} g = 0
\]
in $V_{\varkappa e^{-N}, r \varkappa e^N}$, where $\overline{M}$ is homogeneous in $z$, $\overline{R}$ has small coefficients when $\varkappa$ is small, and $\overline{T}$ is a potential term, which is small when $\varkappa$ is small. Let $u = u^\varkappa$ solve 
\[
 \overline{M}u + \overline{R}u  = 0
\]
in $V_{\varkappa e^{-N}, r \varkappa e^N}$, with
$u = g$ on $\Gamma_{\varkappa e^{-N}}\bigcup \Gamma_{r \varkappa e^N}$. By the Feynman-Kac formula, 
\begin{equation}
\label{feynman-kac of u}
\frac{u(x_1)}{u(x_2)}= \frac{\mathrm{E}_{x_1}g(\overline{X}_{\tau(\Gamma_{\varkappa e^{-N}}\bigcup \Gamma_{r\varkappa e^{N}})})}{\mathrm{E}_{x_2}g(\overline{X}_{\tau(\Gamma_{\varkappa e^{-N}}\bigcup \Gamma_{r\varkappa e^{N}})})},\quad x_1,x_2\in V_{\varkappa,r\varkappa}.
\end{equation}
We would like to show that the expressions in the numerator and denominator in the right-hand side are close. To this end,
we will consider the measures induced by the stopped process $\overline{X}_{\tau(\Gamma_{\varkappa e^{-N}}\bigcup \Gamma_{r\varkappa e^{N}})}$ with the starting point $x \in \Gamma_{r\varkappa e^{N-k}}$ for $k = 1,...,N$. Using induction on $k$, we will show that such measures do not depend much on $x$ for $k = N$, i.e., for $x \in  \Gamma_{r\varkappa }$.

 For $x \in V_{\varkappa e^{-N}, r\varkappa e^{N-1}}$,  let $\mu^{x}$ be the measure induced by 
$\overline{X}_{\tau(\Gamma_{\varkappa e^{-N}}\bigcup \Gamma_{r\varkappa e^{N}})}$ on $\Gamma_{r\varkappa e^{N}}$ (this is a sub-probability measure). Let us consider $\varkappa$ sufficiently small such that $\varkappa e^N\leq \varkappa_0$. By (\ref{coupled measure}),  we have
\begin{equation}
    \label{common factor c}
    \rho(\mu^{x_1}, \mu^{x_2}) \geq C,\quad x_1, x_2 \in \Gamma_{r\varkappa e^{N-1}},
\end{equation}
for all $\varkappa$ sufficiently small. Observe that the same value of $C$ will work if we reduce the value of $N$.

Now consider $x_1, x_2 \in \Gamma_{r\varkappa e^{N-2}}$. The processes starting at these points may get coupled later, upon reaching $\Gamma_{r\varkappa e^{N-1}}$ and then proceed to $\Gamma_{r\varkappa e^{N}}$. The uncoupled portions of the measures induced on $\Gamma_{r\varkappa e^{N-1}}$  may get coupled upon reaching $\Gamma_{r\varkappa e^{N}}$. Thus,
\[
\rho(\mu^{x_1}, \mu^{x_2}) \geq C (1 - \delta_1) + (1 - C - \delta_1) \inf_{x'_1, x'_2 \in \Gamma_{r \varkappa e^{N-1}}} \rho(\mu^{x'_1}, \mu^{x'_2}),\quad x_1,x_2\in \Gamma_{r\varkappa e^{N-2}},
\]
holds for $\delta_1$ in (\ref{goingritht}) and $C$ in (\ref{common factor c}).

Inductively, let us consider $a_k = \inf_{x_1, x_2 \in \Gamma_{r \varkappa e^{N-k}}} \rho(\mu^{x_1}, \mu^{x_2})$. The previous discussion shows that $a_1 \geq C$ and $a_2 \geq  C (1 - \delta_1) + (1 - C - \delta_1)a_1$. Arguing as above, we get
\[
a_k \geq  C (1 - \delta_1) + (1 - C - \delta_1)a_{k-1},~~1 < k \leq N.
\]
Therefore, for each $\delta_2 > 0$, there exists $N_2$ that depends on $\delta_1,\delta_2$ such that for all $N\geq N_2$, we have
\[
a_N = \inf_{x_1, x_2 \in \Gamma_{r \varkappa }} \rho(\mu^{x_1}, \mu^{x_2})
\geq \frac{(1-\delta_1) C}{\delta_1 + C} - \delta_2
\]
for all sufficiently small $\varkappa$. 

Note that the right-hand side can be made arbitrarily close to $1$ by taking $\delta_1$ and $\delta_2$ sufficiently small. By (\ref{goingritht}), the process starting in $V_{\varkappa, r \varkappa}$ will go to $\Gamma_{r\varkappa}$ before going to $\Gamma_{\varkappa e^{-N}}$ with high probability. Therefore, using the strong Markov property, we conclude that the infimum in the last inequality can be taken over $x_1, x_2 \in V_{\varkappa, r \varkappa}$, i.e., for each $\delta_3>0$, there exists a sufficiently large $N$ such that
\begin{equation}
\label{common factor conclusion}
\inf_{x_1, x_2 \in V_{\varkappa, r \varkappa }} \rho(\mu^{x_1}, \mu^{x_2})
\geq  1- \delta_3
\end{equation}
for all $\varkappa$ sufficiently small. From (\ref{feynman-kac of u}), (\ref{common factor conclusion}) and Lemma \ref{uniform bound on g}, it follows that, for each $\delta > 0$, there exists a sufficiently large $N$ such that
\begin{equation} \label{ratioofu}
1 - \delta \leq \frac{u(x_1)}{u(x_2)} \leq 1 + \delta,~~~x_1, x_2 \in V_{\varkappa, r\varkappa}
\end{equation}
for all $\varkappa$ sufficiently small. What remains is to get a similar inequality with $g$ instead of $u$.
By the Feynman-Kac formula, for $x\in V_{\varkappa,r\varkappa}$,
\[
g(x) = \mathrm{E}_x [g(\overline{X}_{\tau{(\Gamma_{\varkappa e^{-N}}\cup \Gamma_{r\varkappa e^{N}})}})\exp(\int_0^{\tau_{(\Gamma_{\varkappa e^{-N}}\bigcup \Gamma_{r \varkappa e^{N}})}} \overline{T}( \overline{X}_t)dt)]=
\]
\[
u(x) + \mathrm{E}_x [g(\overline{X}_{\tau{(\Gamma_{\varkappa e^{-N}}\cup \Gamma_{r\varkappa e^{N}})}})(\exp(\int_0^{\tau_{(\Gamma_{\varkappa e^{-N}}\bigcup \Gamma_{r \varkappa e^{N}})}} \overline{T}( \overline{X}_t)dt) - 1)].
\]
Since $g$ is uniformly bounded from above and below, it is sufficient to estimate
$$
\mathrm{E}_x [\exp(\int_0^{\tau_{(\Gamma_{\varkappa e^{-N}}\bigcup \Gamma_{r \varkappa e^{N}})}} \overline{T}( \overline{X}_t)dt) - 1].
$$
Since $\overline{T}(y,z)\rightarrow 0$ uniformly in $y$ as $z\downarrow 0$, for each $\lambda>0$ and each $N$,
$$
\mathrm{E}_x \exp( \int_0^{ \tau_{(\Gamma_{\varkappa e^{-N}}\bigcup \Gamma_{r \varkappa e^{N}})}}|\overline{T}(\overline{X}_t)|dt)\leq \mathrm{E}_x \exp( \lambda \tau_{(\Gamma_{\varkappa e^{-N}}\bigcup \Gamma_{r \varkappa e^{N}})}),\quad x\in V_{\varkappa e^{-N},r\varkappa e^N}
$$
for all $\varkappa$ sufficiently small. Using the fact that $\mathrm{P}_x (\tau_{(\Gamma_{\varkappa e^{-N}}\bigcup \Gamma_{r \varkappa e^{N}})} < c^{-1} )  > c > 0$ for some $c > 0$ and all sufficiently small $\varkappa$, and the fact that $\lambda$ can be chosen arbitrarily small, we conclude, by the Markov property, that
$$
\sup_{x\in V_{\varkappa e^{-N},r\varkappa e^N}}\mathrm{E}_x \exp( \lambda \tau_{(\Gamma_{\varkappa e^{-N}}\bigcup \Gamma_{r \varkappa e^{N}})}) 
$$
can be made arbitrarily close to $1$. Therefore, we can replace $u$ by $g$ in (\ref{ratioofu}) to obtain that, for each $\delta > 0$,
$$
1 - \delta \leq \frac{g(x_1)}{g(x_2)} \leq 1 + \delta,\quad x_1, x_2 \in V_{\varkappa, r\varkappa},
$$
for all $\varkappa$ sufficiently small. This proves Lemma \ref{g constant on interval} in the case $\overline{\gamma}<0$.

For $\overline{\gamma}>0$ (attracting case), the proof is similar. The main difference is that, instead of (\ref{goingritht}), we now have, for $\delta_1 >0$ and all sufficiently large $N \in \mathbb{N}$ that depends on $\delta_1$,
\[\mathrm{P}_x(\overline{X}_{\tau(\Gamma_{\varkappa e^{-N}}\bigcup \Gamma_{r \varkappa e^N})}\in \Gamma_{ \varkappa e^{-N}})\geq 1 -\delta_1,\quad x \in V_{\varkappa e^{-N}, r \varkappa},
\]
for all $\varkappa$ sufficiently small. This is justified by using Lemma \ref{attracting transition probability} instead of Lemma \ref{unperturbed transition probability}. The rest of the proof is based on the same coupling argument, as the process goes towards $\Gamma_{\varkappa e^{-N}}$.

For $\overline{\gamma}=0$ (neutral case), we cannot quote a particular lemma from \cite{Metastability} to get an analogue of 
(\ref{goingritht}). However, following the arguments that prove Lemmas \ref{unperturbed transition probability} and \ref{attracting transition probability}, it is not difficult to show that, for $\delta_1 >0$ and $N^+ \in \mathbb{N}$, for all sufficiently large $N^- \in \mathbb{N}$ that depends on $\delta_1$ and $N^+$, we have
\[\mathrm{P}_x(\overline{X}_{\tau(\Gamma_{\varkappa e^{-N^-}}\bigcup \Gamma_{r \varkappa e^{N^+}})}\in \Gamma_{ r\varkappa e^{N^+}})\geq 1 -\delta_1,
\]
provided that $x \in V_{\varkappa, r \varkappa e^{N^+}}$ and $\varkappa$ is sufficiently small (depending on $N^-,N^+$). The rest of the arguments are the same as in the repelling case. \qed
\\
\,\\
Let us continue with the proof of Theorem~\ref{invariant measure}. By Lemma \ref{uniform bound on g} and Lemma \ref{g constant on interval}, for an arbitrarily fixed number $r\geq 1$, there exists a function $\delta_0(\varkappa)$ such that $\lim_{\varkappa\downarrow 0}\delta_0(\varkappa)= 0$ and the function $g$ satisfies
\begin{equation}
    \label{g small error constant}
    \sup_{x\in V_{\varkappa,r\varkappa}} g - \inf_{x\in V_{\varkappa, r\varkappa}} g\leq \delta_0(\varkappa);
\end{equation}
equivalently, this states that $g$ is nearly constant in $V_{\varkappa, r \varkappa}$ for all $\varkappa$ sufficiently small. To prove Theorem \ref{invariant measure}, we still need to show that the constant can be chosen independently of $\varkappa$. Since $f(y,z)= g(y,z)\psi(y)z^{-\gamma-1}$, the following lemma implies Theorem~\ref{invariant measure}.
\begin{lemma}
\label{g uniform constant}
 There exists a function $\delta(t)$ such that $\lim_{t\downarrow 0} \delta(t) = 0$ and, for all sufficiently small $t$, there exists $\varkappa>0$ (depending on $t$) such that
$$
(1-\delta(t))\inf_{y'\in S}g(y',t) \leq g (y,z) \leq (1+\delta(t))\sup_{y'\in S}g(y',t),\quad (y,z)\in {\rm int}\, V_{0,\varkappa}.
$$
\end{lemma}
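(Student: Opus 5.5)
We will use the maximum principle for the operator $\overline{L}=\overline{M}+\overline{R}+\overline{T}$ on the thin region between $S$ and $\Gamma_t$, with the small potential $\overline{T}$ controlled through the Feynman--Kac representation, exactly as at the end of the proof of Lemma~\ref{g constant on interval}.

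\emph{Setup.} Fix $t$ small and set $\varkappa$ with $V_{0,\varkappa}$ lying strictly inside $\{z<t\}$; precisely, take $\varkappa$ so small that $\Gamma_\varkappa$ is contained in the slab $\{z\le t\}$. For $x\in {\rm int}\,V_{0,\varkappa}$ and small $\theta\in(0,\varkappa)$, apply Feynman--Kac on the domain $W_\theta=\{(y,z): (\varphi(y))^{1/\gamma}z>\theta,\ z<t\}$:
\[
g(x)=\mathrm{E}_x\Big[g(\overline{X}_{\sigma})\exp\Big(\int_0^{\sigma}\overline{T}(\overline{X}_s)\,ds\Big)\Big],\qquad \sigma=\tau(\Gamma_\theta\cup\{z=t\}).
\]
The boundary values on $\{z=t\}$ lie between $\inf_{y'}g(y',t)$ and $\sup_{y'}g(y',t)$. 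The term from exits through $\Gamma_\theta$ is at most $C\,\mathrm{P}_x(\overline{X}_\sigma\in\Gamma_\theta)$ by Lemma~\ref{uniform bound on g}.

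\emph{Key steps.} First, $\overline{X}_t$ is of the same type as $X_t$, so the analogues of Lemmas~\ref{unperturbed transition probability}, \ref{attracting transition probability} apply. Second, I distinguish the cases.
\begin{itemize}
\item If $\overline{\gamma}<0$, the probability of reaching $\Gamma_\theta$ first tends to $0$ as $\theta\downarrow0$, so we may pass to $\theta\downarrow 0$. We obtain $g(x)=\mathrm{E}_x[g(\overline X_{\tau})e^{\int\overline T}]$, with $\tau$ the hitting time of $\{z=t\}$; the process never reaches $S$ by inaccessibility.
\item If $\overline{\gamma}\ge 0$, the process may instead be attracted to $S$, and the argument needs a change. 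By Lemma~\ref{g constant on interval}, $g$ is nearly constant on each layer $V_{\theta,e\theta}$. Moreover, $\lim g$ along layers must coincide with boundary values, up to errors, through a chaining argument.
\end{itemize}
I would try first to show directly, from the Hasminskii comparison in Lemma~\ref{uniform bound on g} (the occupation-time computation), that $\overline{\gamma}<0$ necessarily. Indeed, $\overline{M}$ is essentially the $h$-transform of $M^*$ by $\psi z^{-\gamma-1}$, and $1$ being harmonic for $\overline{M}$ combined with the $M$-harmonic $\varphi z^{\gamma}$ suggests $\overline{\gamma}$ is computable: the transformed harmonic function $\varphi z^\gamma/(\psi z^{-\gamma-1})\cdot(\ldots)$ gives an exponent $2\gamma+1$ type relation. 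Third, control the exponential factor. Since $|\overline{T}|\le Cz$, we need $\mathrm{E}_x\int_0^\tau z(\overline X_s)\,ds\to0$ as $t\to0$ uniformly in $x$. This follows from Lemma~\ref{unperturbed expectation exit time}-type bounds: the time spent in $V_{2^{-j-1}t,2^{-j}t}$ has expectation $O(1)$ per visit, and the number of visits is geometrically controlled by the transition-probability lemma (as in the $T_1$ bound). Hence the sum is $O(t)$, summing $2^{-j}t$. An exponential-moment version follows as before, via the Markov property and smallness. This gives $\delta(t)\to0$.

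\emph{Main obstacle.} The hardest step is handling the behaviour of $\overline{X}$ near $S$ when it is not repelling, i.e.\ justifying that the contribution of paths approaching $S$ is negligible or already consistent with the bounds. If $\overline\gamma<0$ cannot be established, I would fall back on the following. Take $\theta=t e^{-N}$ for each $N$, use Lemma~\ref{g constant on interval} to say $g\approx$ a constant $c_N$ on $\Gamma_\theta$, and iterate in $N$. This yields a monotone-type recursion among the $c_N$ which, by the maximum principle with vanishing potential error summable over dyadic layers, keeps all $c_N$ within $[(1-\delta)\inf g(\cdot,t),(1+\delta)\sup g(\cdot,t)]$.
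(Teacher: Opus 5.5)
Your route differs genuinely from the paper's. You represent $g$ by Feynman--Kac along $\overline X_t$ on $S\times(0,t)$, shrink the inner boundary $\Gamma_\theta$, and control $e^{\int\overline T}$ through $|\overline T|\le Cz$. As written, however, it has a gap exactly where you flag it: everything depends on $\overline X_t$ being repelling, and you do not establish this.

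Your heuristic divides the $M$-harmonic function $\varphi z^{\gamma}$ by $h=\psi z^{-\gamma-1}$. That does not produce an $\overline M$-harmonic function, because $\overline M u=h^{-1}M^*(hu)$ is the $h$-transform of $M^*$, not of $M$. The resulting ``$2\gamma+1$'' exponent is therefore not the right one.

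The fallback is circular. With an inner boundary $\Gamma_{te^{-N}}$ kept, the maximum principle only gives $c_{N-1}\le(1+o(1))\max\bigl(c_N,\sup_{y'}g(y',t)\bigr)$. This never bounds the inner constants by the data at $z=t$. If $\overline X_t$ were attracted to $S$, the values of $g$ near $S$ would be governed by the inner data, which is precisely what the lemma is supposed to control.

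The missing observation is short. Since $\pi$ is invariant for $L_y$,
$M^*(z^{-1})=z^{-1}L_y^*1+\partial_z^2(\alpha z)-\partial_z\beta-\partial_z(\mathcal D_y^*1)=0$.
Equivalently, $\pi(dy)\,dz/z$ is invariant for $M$, which is translation invariant in $\ln z$. Hence $\overline M(\psi^{-1}z^{\gamma})=h^{-1}M^*(z^{-1})=0$. So $\overline\gamma=\gamma<0$ with $\overline\varphi\propto\psi^{-1}$, and the analogue of Lemma~\ref{unperturbed transition probability} holds for $\overline X_t$.

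With this, the rest of your plan works:
\begin{itemize}
\item The dyadic-layer bound $\sup_x\mathrm{E}_x\int_0^\tau z(\overline X_s)\,ds=O(t)$ is correct.
\item The exponential moment should come from Khasminskii's lemma rather than ``as before'': $\sup_x\mathrm{E}_x\int_0^\tau|\overline T|\,ds\le\kappa<1$ implies $\sup_x\mathrm{E}_x e^{\int_0^\tau|\overline T|ds}\le(1-\kappa)^{-1}$. The earlier argument fails uniformly because $\mathrm{E}_x\tau$ grows like $\ln(t/z)$ near $S$.
\item The result is the lemma with $\delta(t)=O(t)$ on all of $S\times(0,t)$, without using Lemma~\ref{g constant on interval}.
\end{itemize}

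The paper avoids any information about $\overline X_t$. It works with the original repelling process through the adjoint equation, testing $F_+=f_+-f$, where $f_+=(1+\delta(t))\sup_{y'}g(y',t)\,\psi z^{-\gamma-1}$, against solutions of $L\zeta^\varkappa_\eta=-\delta^\varkappa_\eta$ with $\zeta^\varkappa_\eta|_{z=t}=0$. The boundary term at $z=t$, of order $\sqrt t\,\varkappa^{-\gamma-1}$, dominates errors of order $t\varkappa^{-\gamma-1}$. Lemma~\ref{g constant on interval} then turns the resulting averaged inequality on $\Gamma_\varkappa$ into a pointwise one. That argument is heavier and only covers $\varkappa$ small depending on $t$, but it needs no identification of $\overline\gamma$.
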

\begin{proof}
The upper bound is proved below, while the lower bound can be proved similarly.

By (\ref{g small error constant}), it is sufficient to find $\delta(t)$ such that $\lim_{t\downarrow 0}\delta(t) = 0$ and, for each sufficiently small $t$, for all sufficiently small $\varkappa$ (depending on $t$),
\begin{equation}
\label{comparison of density- upper bound}
f(y,z) = g(y,z) \psi(y) z^{-\gamma-1} \leq f_+ (y,z):= (1+\delta(t))\sup_{y'\in S}g(y',t)\psi(y) z^{-\gamma-1},
\end{equation}
for all $(y,z)\in V_{0,\varkappa}$. Applying the adjoint operator $L^*$ to the difference $F_+(x):= f_+(x)-f(x)$, we obtain
\begin{equation} \label{simp11}
L^* F_+ = L^*f_+ = R^*f_+,
\end{equation}
where we used (\ref{quasiadjoint}) and (\ref{invariantmeasureoperator}). By Lemma \ref{g constant on interval}, to show (\ref{comparison of density- upper bound}), it is sufficient to demonstrate that $\sup_{x \in \Gamma_{\varkappa}} F_+(x)$ is non-negative for all $\varkappa$ sufficiently small (depending on $t$).  

Let $\delta_\eta$, $\eta > 0$, be mollified $\delta$-functions supported on $[-\eta, \eta]$ and let
$$
\delta_\eta^{\varkappa}(y,z) := \delta_\eta(\varphi^{\frac{1}{\gamma}}(y)z - \varkappa),\quad (y,z)\in S\times(0,t).
$$ 
Note that for each sufficiently small $t$, for all $\varkappa$ sufficiently small (depending on $t$), for all $\eta$ sufficiently small (depending on $\varkappa$), $\delta_\eta^\varkappa$ is supported on $ V_{\varkappa-\eta, \varkappa+\eta}$. If it is viewed as a density of a certain measure with respect to the $\pi(dy) dz$ measure, then the measure converges to the limiting measure $ \pi(dy)\delta((\varphi(y)^{\frac{1}{\gamma}}z-\varkappa))$  concentrated on $\Gamma_\varkappa$, as $\eta \downarrow 0$.

Let the family of smooth bounded test functions $\zeta^\varkappa_\eta(y,z)$ satisfy
$$
\begin{aligned}
        &L\zeta^\varkappa_{\eta}(y,z) = -\delta_\eta^{\varkappa}(y,z),\quad (y,z)\in S \times (0,t),\\
        &\zeta^\varkappa_{\eta}(y, z)|_{z=t} = 0.\\
    \end{aligned}
$$
The solution to this equation exists and is unique (in the class of smooth bounded functions). Since the surface is repelling, only the boundary condition at $z = t$ is required.

By the definition of $\zeta^\varkappa_\eta$, 
\begin{equation}
\label{aimfornegative}
\begin{aligned}
\lim_{\eta\downarrow 0}\int_{S\times (0,t)} F_+(y,z) L\zeta_\eta^\varkappa(y,z) d\pi dz &= -\int_{S} F_+ (y,\varphi(y)^{-\frac{1}{\gamma}}\varkappa)\cdot \varphi^{-\frac{1}{\gamma}}(y) d\pi.
\end{aligned}
\end{equation}
Recall that we only need to show that $\sup_{x\in \Gamma_{\varkappa}}F_+(x)$ is non-negative, so it is enough to show that the right-hand side of (\ref{aimfornegative}) is non-positive for all $\varkappa$ sufficiently small (depending on $t$),  since $\varphi(y)$ is a positive function.  Therefore, it is sufficient to  show that $\delta(t)$ in the definition of $F_+$ can be chosen in such a way that, for each $t$ sufficiently small, for all $\varkappa$ sufficiently small (depending on $t$), for all $\eta$ sufficiently small (depending on $\varkappa$),
\begin{equation}
\label{test integralA}
\int_{S\times (0,t)} F_+(y,z) L\zeta_\eta^\varkappa(y,z) d\pi dz \leq 0.
\end{equation}
Observe that (\ref{test integralA}) will follow if we show that
\begin{equation}
\label{test integral}
\int_{S\times (\xi,t)} F_+(y,z) L\zeta_\eta^\varkappa(y,z) d\pi dz \leq 0,
\end{equation}
for all $\xi$ sufficiently small such that $S\times \{\xi\}\subset {\rm int} V_{0,\varkappa-\eta}$. By Lemma \ref{loccoeff}, (\ref{test integral}) can be expressed as:
\begin{equation}
\label{difference+}
\begin{aligned}
\int_{S\times (\xi,t)} F_+ L\zeta_\eta^\varkappa(y,z) d\pi dz& =\int_{S\times (\xi,t)} F_+L_y\zeta_\eta^\varkappa d\pi dz+ \int_{S\times (\xi,t)} F_+ z^2 \alpha \partial_z^2 \zeta_\eta^\varkappa d\pi dz\\
&+ \int_{S\times (\xi,t)} F_+ z \beta \partial_z \zeta_\eta^\varkappa d\pi dz + \int_{S\times (\xi,t)} F_+ z\mathcal{D}_y \partial_z \zeta_\eta^\varkappa d\pi dz \\
&+\int_{S\times (\xi,t)} F_+ R \zeta_\eta^\varkappa d\pi dz.
\end{aligned}
\end{equation}
We will show that the right-hand side of (\ref{difference+}) is negative for all $t$ sufficiently small, for all $\varkappa$ sufficiently small (depending on $t$), for all $\eta$ sufficiently small (depending on $\varkappa$), and for all $\xi$ sufficiently small (depending on $\eta$). For the first term, we have
$$
\int_{S\times (\xi,t)}  F_+L_y(\zeta_\eta^\varkappa(y,z)) d\pi dz = \int_{S\times (\xi,t)}  L_y^* F_+ \zeta_\eta^\varkappa(y,z) d\pi dz =: I_1.
$$
For the second and third terms, we have
$$
\begin{aligned}
\int_{S\times (\xi,t)} F_+ z^2 \alpha \partial_z^2 \zeta_\eta^\varkappa d\pi dz&= \int_S \partial_z \zeta_\eta^\varkappa(y,z) F_+(y,z) z^2 \alpha(y) d\pi|_{z=\xi}^{z=t} \\
&+ \int_S \zeta_\eta^\varkappa(y,\xi) \partial_z (F_+(y,z) z^2)|_{z=\xi}\alpha(y) d\pi\\
&+ \int_{S\times (\xi,t)} \zeta_\eta^\varkappa(y,z) \partial_z^2 (F_+z^2)\alpha(y) d\pi dz =: I_2 +I_3+I_4
\end{aligned}
$$
and
$$
\int_{S\times (\xi,t)} F_+ z \beta \partial_z \zeta_\eta^\varkappa d\pi dz = - \int_S F_+(y,\xi) \xi\beta \zeta_\eta^\varkappa(y,\xi) d\pi - \int_{S\times (\xi,t)} \partial_z (F_+ z \beta) \zeta_\eta^\varkappa d\pi dz+=: I_5 + I_6.
$$
For the fourth term, we have
$$
\begin{aligned}
\int_{S\times (\xi,t)} F_+ z\mathcal{D}_y \partial_z \zeta_\eta^\varkappa d\pi dz =&- \int_S \mathcal{D}_y^* F_+(y,\xi) \xi \zeta_\eta^\varkappa(y,\xi) d\pi \\
&-\int_{S\times (\xi,t)} \zeta_\eta^\varkappa \partial_z (z\mathcal{D}_y^* ( F_+)) d\pi dz=: I_7 + I_8.
\end{aligned}
$$
For the term with the operator $R$, we define
$$
I_9:= \int_{S\times (\xi,t)} F_+ R \zeta_\eta^\varkappa d\pi dz - \int_{S\times (\xi,t)} \zeta_\eta^\varkappa R^*F_+ d\pi dz
$$
and
$$
I_{10}:= \int_{S\times (\xi,t)} \zeta_\eta^\varkappa R^*F_+ d\pi dz.
$$
Then we have
$$
\int_{S\times (\xi,t)} F_+ R \zeta_\eta^\varkappa d\pi dz = I_{9} + I_{10}.
$$
From the constructions of the integrals above, to show (\ref{test integral}), it is enough to show that $I_1+...+I_{10} \leq 0$. In fact, we will show that $I_2$ is negative and all the other terms are dominated by the absolute value of $I_2$, with a proper selection of the function $\delta(t)$ on which $F_+$ depends. Observe that 
\begin{equation}
\label{zeta integral}
I:=I_1+ I_4+I_6+I_8+I_{10} = \int_{S\times (\xi,t)} \zeta_\eta^\varkappa L^*F_+ d\pi dz = \int_{S\times (\xi,t)} \zeta_\eta^\varkappa R^*f_+ d\pi dz,
\end{equation}
where we used (\ref{simp11}) for the second equality. To estimate all the integrals, we need to estimate $\zeta_\eta^\varkappa(y,z)$, $\partial_z \zeta_\eta^\varkappa(y,\xi)$, and $\partial_z \zeta_\eta^\varkappa(y,t)$. 
We gather the estimates of $\zeta_\eta^\varkappa$ in the following lemma, which will be proved after we complete the proof of Lemma \ref{g uniform constant}.
\begin{lemma}
\label{zeta property}
There are positive constants $K_1,...,K_5$ such that for all $t$ sufficiently small, for all $\varkappa$ sufficiently small (depending on $t$), for all $\eta$ sufficiently small (depending on $\varkappa$), we have the following estimates:

(a)
$$
\sup_{x\in S\times (0,t)}\zeta_\eta^\varkappa(x) \leq \frac{K_1}{\varkappa}; \inf_{x\in V_{0,\varkappa+\eta}}\zeta_\eta^\varkappa(x)\geq \frac{K_2}{\varkappa}.
$$

(b)
$$
\sup_{y\in S}\zeta_\eta^\varkappa(y,z)  \leq K_3 \frac{z^\gamma}{\varkappa^{\gamma+1}},\quad z\in (\varkappa,t).
$$

(c)
$$
\sup_{y\in S}\partial_z \zeta_\eta^\varkappa(y,t)\leq - K_4 t^{\gamma-1}\varkappa^{-\gamma-1}.
$$

(d) for all $z$ that are sufficiently small (depending  on $\varkappa,\eta$),
$$
\sup_{y\in S}|\partial_z \zeta_\eta^\varkappa(y,z)|\leq \frac{K_5}{\varkappa z}.
$$
\end{lemma}
Let us estimate each of the integral terms. We start with $I$ found in (\ref{zeta integral}).
By (b) in Lemma \ref{zeta property}, since $g$ is bounded and the coefficients in $R^*$ are bounded, there exists $C_1>0$, such that
$$
\begin{aligned}
|I|=|\int_{S\times (\xi,t)} R^*(f_+)\zeta_\eta^\varkappa d\pi dz| &\leq (1+|\delta(t)|)| \int_{S\times (\xi,t)} g(y_1,t) R^*(\psi(y)z^{-\gamma-1}) \zeta_\eta^\varkappa d\pi dz|\\
&\leq C_1 (\int_{(\xi,\varkappa)} \frac{z^{-\gamma}}{\varkappa} dz +  \int_{(\varkappa,t)} z^{-\gamma}\frac{z^\gamma}{\varkappa^{\gamma+1}} dz)\\
&\leq C_1 t \varkappa^{-\gamma-1},\\
\end{aligned}
$$
for all $t$ sufficiently small, for all $\varkappa$ sufficiently small (depending on $t$), for all $\eta$ sufficiently small (depending on $\varkappa$), and for all $\xi$ sufficiently small (depending on $\eta$). Next, we estimate $I_2$.
For the boundary term at $z=t$, by (c) in Lemma \ref{zeta property},
\begin{equation}
\label{estimate of I2}
\int_S \partial_z \zeta_\eta^\varkappa(y,t) F_+(y,t) t^2 \alpha(y) d\pi\leq -K_4 \varkappa^{-\gamma-1}\int_{S} ((1+\delta(t))g(y_1,t) - g(y,t))\psi(y)\alpha(y)d\pi.
\end{equation}
Recall that $\psi(y)$ and $\alpha(y)$ are positive functions. By (\ref{g small error constant}), we can find $\delta(t)$ such that $\delta(t) \downarrow 0$, as $t \downarrow 0$, with the decay that is sufficiently slow so that there exists $C_2>0$ such that the right hand side of (\ref{estimate of I2}) has the estimate
\[
-K_4 \varkappa^{-\gamma-1}\int_{S} ((1+\delta(t))g(y_1,t) - g(y,t))\psi(y)\alpha(y)d\pi  \leq -C_2 \sqrt{t} \varkappa^{-\gamma-1},
\]
for all $t$ sufficiently small, for all $\varkappa$ sufficiently small (depending on $t$). For the boundary term at $z=\xi$ in $I_2$, by (d) in Lemma \ref{zeta property},
$$ \int_S \partial_z \zeta_\eta^\varkappa(y,\xi) F_+(y,\xi) \xi^2 \alpha(y) d\pi= O(\xi^{-\gamma}) \rightarrow 0,~~{\rm as}~~\xi \downarrow 0.$$
For the term $I_3$, the main  difficulty is to estimate $\partial_z g(y,\xi)$ (found in $\partial_z F_+(y,\xi)$), since $\zeta_\eta^\varkappa(y,\xi)$ is bounded by (a) in Lemma \ref{zeta property}. Recall that $g$ satisfies the equation:
\begin{equation} \label{eqong}
\overline{L} g(y,z) = \overline{M} g + \overline{R} g + \overline{T} g =0, \quad (y,z)\in  S\times (\frac{1}{2}\xi, 2\xi).
\end{equation}
Let $g^\xi(y,z) := g(y,\xi z)$, $(y,z) \in S \times (\frac{1}{2}, 2)$.  By (\ref{eqong}), $g^\xi$ satisfies an elliptic equation in a fixed domain and $g^\xi$ is uniformly bounded by Lemma \ref{uniform bound on g}. Similarly to the last Remark in Section \ref{procbeh}, the bounds on the coefficients, the ellipticity constant, and the bound on the solution are uniform for all sufficiently small $\xi$. Therefore, by $(v)$ in Lemma \ref{ellipticity estimation}, $\partial_z  g^\xi(y,1)$ is bounded for all sufficiently small $\xi$. Thus, there exists $C_3$ such that
$$
\sup_{y\in S}|\partial_{z}g(y,\xi)| \leq \frac{C_3}{\xi},
$$
for all $\xi$ sufficiently small. This allows us to conclude that $\lim_{\xi \downarrow 0} I_3 = 0$.

 By (a) in Lemma \ref{zeta property}, we also have $\lim_{\xi \downarrow 0} I_5 = \lim_{\xi \downarrow 0} I_7 = 0$.
In order to estimate $I_9$, we can integrate by parts to reduce it to two boundary terms. The term with the integral along the surface $z = \xi$ goes to zero as $\xi \downarrow 0$ (just as $I_3$, $I_5$ and $I_7$). The term with the integral along the surface $z = t$ is of order at most $t\varkappa^{-\gamma-1}$, as follows from Lemma \ref{zeta property} (c) and  the fact that the coefficients of $R$ are of sufficiently high order in $z$.

Gathering all the estimates on $I_1$ through $I_{10}$ and letting $\xi\downarrow 0$, we see that
$$
I_1 + ... + I_{10} \leq 0,
$$
for all $t$ sufficiently small and for all $\varkappa$ sufficiently small (depending on $t$), for all $\eta$ sufficiently small (depending on $\varkappa$).
\end{proof}
The sketch of the proof of Lemma \ref{zeta property} is provided next. Some details are omitted because they are similar to those in the previous proofs.
\\
\\
{\it Proof of Lemma~\ref{zeta property}.}
By the Feynman-Kac formula,
\begin{equation}
\label{Feynman-Kac formula of zeta}
\zeta^\varkappa_\eta(x) = \mathrm{E}_{x} \int_0^\tau \delta^\varkappa_\eta (X_s)ds,\quad x\in S\times(0,t],
\end{equation}
where $\tau$ is the time it takes for the process to reach $S\times \{t\}$.

(a) Let us prove  the upper bound, while the lower bound can be proved  similarly. Let us write $X_s = (Y_s,Z_s)$ in the coordinates $(y,z)\in S\times (0,t]$.
Let us describe  the behavior of $\zeta_\eta^\varkappa$ in ${\rm int} V_{0,\varkappa+2\eta}$ and $V_{\varkappa+\eta,\infty}\bigcap S\times (0,t]$ separately. By the strong Markov property, it is sufficient to provide upper  estimates on the occupation time
\begin{equation}
\label{occupation time for zeta}
\mathrm{E}_{x} \int_0^{\tau(\Gamma_{2\varkappa})} \delta^\varkappa_\eta (X_s)ds,\quad x\in {\rm int} V_{0,\varkappa+2\eta},
\end{equation}
and the transition probability
$$
\mathrm{P}_x(X_{\tau(\Gamma_{\varkappa+\eta}\bigcup S\times \{t\}}\in \Gamma_{\varkappa+\eta}),\quad x\in \Gamma_{2\varkappa},
$$
for all $t$ sufficiently small, for all $\varkappa$ sufficiently small (depending on $t$), for all $\eta$ sufficiently small (depending on $\varkappa$). Here and below, the parameters $t,\varkappa,\eta$ are chosen in this particular order, so we do not repeat their precise choices each time.

Let us first estimate the expectation in (\ref{occupation time for zeta}). Using the mapping $z\to \hat{z} = z/\varkappa$, as in the last Remark in Section \ref{procbeh}, we get the diffusion $\hat{X}_s = (Y_s, Z_s/\varkappa)$ that is strictly non-degenerate in $V_{\frac{1}{2},2}$. Therefore, returning to the $(y,z)$-coordinate, we see that there exists $C_1>0$ such that
$$
\mathrm{E}_{x} [\tau(\Gamma_{\varkappa-2\eta}\bigcup \Gamma_{\varkappa+2\eta})] \leq C_1(\frac{\eta}{\varkappa})^2,\quad x\in {\rm int} V_{\varkappa-2\eta, \varkappa+2\eta},
$$
and there exists $p_0>0$ such that
$$
\inf_{x\in V_{\varkappa-\eta,\varkappa+\eta}} \mathrm{P}_{x}(X_{\tau(\Gamma_{\varkappa-2\eta}\bigcup \Gamma_{\varkappa+2\eta})}\in \Gamma_{\varkappa+2\eta})\geq p_0.
$$
By $(iv)$ in Lemma \ref{ellipticity estimation}, using  the  mapping $z\to z/\varkappa$, we see that there exist $C_2^-, C_2^+>0$ such that
$$
C_2^-\frac{\eta}{\varkappa}<\mathrm{P}_{x}(X_{\tau(\Gamma_{\varkappa+\eta}\bigcup \Gamma_{2\varkappa}})\in \Gamma_{2\varkappa})<C_2^+\frac{\eta}{\varkappa},\quad x\in \Gamma_{\varkappa+2\eta}.
$$
Examining successive excursions between $\Gamma_{\varkappa + \eta}$ and $\Gamma_{2 \varkappa}$ 
(similarly to Lemma \ref{uniform bound on g}) and using the definition of $\delta_\eta^\varkappa$, we see that  there exists $C_3>0$ such that
$$
\mathrm{E}_x[\int_0^{\tau(\Gamma_{2\varkappa})}\delta_{\eta}^\varkappa(X_s)ds]\leq \frac{C_3}{\varkappa},\quad S\times (0,t).
$$
The upper bound in $(a)$ holds since there exists $C_4>0$ such that
$$
\mathrm{P}_x(X_{\tau(\Gamma_{\varkappa+\eta}\bigcup S\times \{t\})}\in S\times \{t\})\geq C_4,\quad x\in \Gamma_{2\varkappa},
$$
where the latter estimate follows from Lemma \ref{unperturbed transition probability}.

(b) This follows from Lemma \ref{unperturbed transition probability} and (a).

(c) By (a), the derivative of $\zeta_\eta^\varkappa$ in (\ref{Feynman-Kac formula of zeta}) has the following estimate:
\begin{equation}
\label{derivative of zeta}
\begin{aligned}
    \partial_z \zeta_\eta^\varkappa(y,t) &\leq -{\lim \inf}_{\varepsilon\downarrow 0}\frac{1}{\varepsilon} \frac{K_2}{\varkappa} \mathrm{P}_{(y,t-\varepsilon)}(X_{\tau(\Gamma_{\varkappa+\eta}\bigcup \{(y,t), y\in S\})}\in \Gamma_{\varkappa+\eta}),\quad y\in S.
\end{aligned}
\end{equation}
The transition probability in (\ref{derivative of zeta}) can be separated into two parts:
$$
\begin{aligned}
\mathrm{P}_{(y,t-\varepsilon)}(X_{\tau(\Gamma_{\varkappa+\eta}\bigcup \{(y,t), y\in S\})}\in \Gamma_{\varkappa+\eta})&\geq \mathrm{P}_{(y,t-\varepsilon)}(X_{\tau(S\times\{t/2\}\bigcup S\times\{t\})}\in S\times\{t/2\})\\
&\cdot \inf_{x\in S\times\{t/2\}}\mathrm{P}_{x}(X_{\tau(\Gamma_{\varkappa+\eta}\bigcup \{(y,t), y\in S\})}\in \Gamma_{\varkappa+\eta}),
\end{aligned}
$$
Using $(iv)$ in Lemma \ref{ellipticity estimation} and the last Remark in Section \ref{procbeh}, for all $\varepsilon$ sufficiently small, there exists $C_5>0$ such that
$$
\mathrm{P}_{(y,t-\varepsilon)}(X_{\tau(S\times\{t/2\}\bigcup S\times\{t\})}\in S\times\{t/2\})\geq C_5\frac{\varepsilon}{t}.
$$
By Lemma \ref{unperturbed transition probability}, there exists $C_6>0$ such that
$$
\inf_{x\in S\times\{t/2\}}\mathrm{P}_{x}(X_{\tau(\Gamma_{\varkappa+\eta}\bigcup \{(y,t), y\in S\})}\in \Gamma_{\varkappa+\eta})\geq C_6 (\frac{t}{\varkappa})^{\gamma}.
$$
Combining the above estimates, we obtain(c).

(d) For all sufficiently small $z_0>0$ (depending on $\varkappa,\eta$), $\zeta_\eta^\varkappa$ satisfies
$$
L \zeta_\eta^\varkappa(y,z) = 0,\quad  (y,z)\in S\times (\frac{z_0}{2},2z_0)\subset \text{int}V_{0,\varkappa-\eta}.
$$
By (a), $\sup_{(y,z)\in S\times [\frac{z_0}{2}, 2z_0]}|\zeta_\eta^\varkappa(y,z)|\leq \frac{K_1}{\varkappa}$. By $(v)$ in Lemma \ref{ellipticity estimation}, there exists $C_7$ such that
$$
\sup_{y\in S}|\partial_{z} \zeta_\eta^\varkappa(y,z_0)|\leq \frac{C_7}{z_0\varkappa}.
$$
\qed
\\

Note that the asymptotic behavior of the invariant measure  depends only on the  coefficients of the operator $L$ in a neighborhood of the surface $S$ in all the  lemmas in this section. Thus, we have a more general version of Theorem \ref{invariant measure}. First, we introduce the required notation.

Suppose that a domain $D$ has the boundary $\partial D = \{S_1^D,..., S_{m_D}^D\}$ with $S_i^D,\, 1\leq i\leq m_D$, which are $C^7$-smooth surfaces. For each surface $S_i^D$, there is a product measure $d\pi_i\times dz$ defined in $D$ in a neighborhood of $S_i^D$, as mentioned before Lemma \ref{lemmaspectral}, and the exponent $\gamma_i^D<0$  defined in Lemma \ref{lemmaspectral}. Correspondingly, $\psi_i$ are  defined for all surfaces $S_i^D,\, 1\leq i\leq m_D$, as in Lemma \ref{adjointelliptic}. 

\begin{theorem}
\label{invariant measure multi-surfaces} There exist constants $c_i > 0,\, 1\leq i \leq m_D$, such that the density $f$ (with respect to $d\pi_i\times dz$) of the invariant measure of the process with the generator $L$ has the following asymptotic behavior in the neighborhood of the surface $S_i^D$:
$$
\lim_{z\downarrow 0} \frac{f(y,z)}{\psi_i(y) z^{-\gamma_i^D-1}} = c_i,\quad y\in S_i^D,\, 1\leq i \leq m_D.
$$
\end{theorem}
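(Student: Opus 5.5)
The plan is to reduce Theorem~\ref{invariant measure multi-surfaces} to Theorem~\ref{invariant measure}, applied to each boundary component $S_i^D$ separately. The key observation is that every ingredient of Section~\ref{invmeas} is local near the surface. First, the density $f$ of $\mu$ restricted to a collar $S_i^{D,\delta}$ of $S_i^D$ satisfies $L^*f=0$ there, by (\ref{invariantmeasureoperator}). Second, the representation of $L$ from Lemma~\ref{loccoeff}, the pair $(\gamma_i^D,\varphi_i)$ from Lemma~\ref{lemmaspectral}, and the adjoint eigenfunction $\psi_i$ from Lemma~\ref{adjointelliptic} depend only on the coefficients of $L$ in that collar. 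So I fix $i$ and rerun the chain Lemma~\ref{uniform bound on g} $\Rightarrow$ Lemma~\ref{g constant on interval} $\Rightarrow$ Lemma~\ref{g uniform constant}. Throughout I write $f=g_i\psi_i z^{-\gamma_i^D-1}$ and work in the layers $V^{(i)}_{\varkappa_1,\varkappa_2}$ defined with $\varphi_i$ and $\gamma_i^D$.

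\textbf{Step 1: the uniform bound (Lemma~\ref{uniform bound on g}).} This is the only step where a global object enters, namely the Hasminskii identity (\ref{meas1a}). That identity compares $\mu(V^{(i)}_{\varkappa_1/2,\varkappa_1})$ with $\mu(V^{(i)}_{\varkappa_2,2\varkappa_2})$ by counting excursions between $\Gamma^{(i)}_{\varkappa_1}$ and $\Gamma^{(i)}_{\varkappa_2}$. This still works in a domain with several boundary components, for two reasons:
\begin{itemize}
\item $X_t$ in $D$ is positive recurrent with the unique invariant measure $\mu$, by Theorem 5.6 of \cite{freidlin2023perturbations}, or by Assumption (e) in the unbounded case.
\item The excursions between the two layers near $S_i^D$ are well defined and a.s. finite in time.
\end{itemize}
An excursion may wander far from $S_i^D$, possibly near other surfaces $S_j^D$, before returning. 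This does not affect the occupation times $T_1,T_2$, since they only count time spent in layers adjacent to $S_i^D$. The estimates (\ref{excursion transition probability}) and (\ref{excursion transition small}) and the bound on $\mathrm{E}_x\tau(\Gamma^{(i)}_{2\varkappa_1})$ only concern the process inside $V^{(i)}_{0,2\varkappa_2}$, where Lemmas~\ref{unperturbed transition probability} and \ref{unperturbed expectation exit time} apply verbatim. Hence $c\le g_i\le C$ near $S_i^D$.

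\textbf{Step 2: near-constancy on layers and a single limit.} Lemma~\ref{g constant on interval} and Lemma~\ref{g uniform constant} (with Lemma~\ref{zeta property}) use only three things:
\begin{itemize}
\item the local equation (\ref{eqnbsr}) for $g_i$;
\item the bound from Step~1;
\item test functions $\zeta^\varkappa_\eta$ solving $L\zeta=-\delta^\varkappa_\eta$ in $S_i^D\times(0,t)$ with zero data at $z=t$.
\end{itemize}
The last of these is again a purely local problem, since $S_i^D$ is repelling and no condition is needed at $z=0$. So these two lemmas carry over unchanged. Lemma~\ref{g uniform constant} gives
\[
\limsup_{z\downarrow 0}\sup_y g_i\le(1+\delta(t))\sup_{y'}g_i(y',t),\qquad \liminf_{z\downarrow 0}\inf_y g_i\ge(1-\delta(t))\inf_{y'}g_i(y',t).
\]
Lemma~\ref{g constant on interval} gives that the oscillation of $g_i$ on $\Gamma^{(i)}_t$ tends to $0$. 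Combining these, $\limsup_{z\downarrow0}\sup_y g_i-\liminf_{z\downarrow0}\inf_y g_i\to0$ as $t\downarrow0$, so $g_i(y,z)\to c_i$ uniformly in $y$. Step~1 gives $c_i\ge c>0$. This is the claimed limit $f/(\psi_i z^{-\gamma_i^D-1})\to c_i$.

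\textbf{Main obstacle.} The delicate point is Step~1 in the multi-surface (and possibly unbounded) setting. One must check that the excursion chains on $\Gamma^{(i)}_{\varkappa_1}$ and $\Gamma^{(i)}_{\varkappa_2}$ have invariant measures $\nu_1,\nu_2$. One must also check that (\ref{meas1a}) holds with constants independent of $\varkappa_1$, even though part of each cycle is spent near the other repelling surfaces or far out in an unbounded $D$. I would handle this with positive recurrence alone, without any bound on the far part of the excursion. The ratio in (\ref{meas1a}) involves only $\mathrm{E}_xT_1$ and $\mathrm{E}_xT_2$, whose bounds were obtained by local arguments. The return times themselves enter only through finiteness of the expected cycle length, and that is guaranteed by positive recurrence of $X_t$ in $D$.
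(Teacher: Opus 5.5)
Your proposal is correct and follows the paper's own route. The paper obtains Theorem~\ref{invariant measure multi-surfaces} simply by noting that every lemma of Section~\ref{invmeas} depends only on the coefficients of $L$ near the surface in question; that section is already set in a domain with several boundary components, treated one at a time. So Theorem~\ref{invariant measure} applies to each $S_i^D$ separately, which is exactly your reduction, and your discussion of the Hasminskii identity just makes explicit what the paper leaves implicit in Lemma~\ref{uniform bound on g}.

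One small point: the oscillation you need in Step~2 is on $S_i^D\times\{t\}$, not on $\Gamma^{(i)}_t$. That set lies in some $V^{(i)}_{at,bt}$ with $b/a$ fixed, so Lemma~\ref{g constant on interval} still gives it.
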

\section{Transition probabilities and exit times when exiting one domain} 

\label{Probability and expectation time for transition in one domain}
In this section, we focus on the behavior of the process $X_t^\varepsilon$ in one domain, so the dependence on $D$ is dropped from the notation. We consider a domain $D$ with the boundary $\partial D = \{S_1,...,S_{m}\}$ and let $M:=\{1,..., m\}$. Without loss of generality, let us assume that $0>\gamma_1\geq \gamma_2\geq...\geq \gamma_m$, and let $l$ be such that $\gamma_1=...=\gamma_{l} > \gamma_{l+1}$ if $\gamma_1>\gamma_m$ and $l = m$ otherwise. Let $V_{\varkappa_1,\varkappa_2}^i,\Gamma_\varkappa^i$ be defined as in (\ref{defineition of layers}) in a neighborhood of $S_i,\, i\in M$. We define $\Gamma_{\varkappa}:=\bigcup_{j\in M} \Gamma_{\varkappa}^j$ and $V_{\varkappa_1,\varkappa_2}:=\bigcup_{j\in M}V_{\varkappa_1,\varkappa_2}^j$ for $0\leq\varkappa_1<\varkappa_2$.

 The proofs of the statements in this section will be provided for the case when $D$ is bounded.   We assume that the starting point $X_0^\varepsilon\in K:={\rm cl} (D\setminus V_{0,\varkappa_0})$, where $\varkappa_0$ is chosen sufficiently small so that $X_t^\varepsilon$ cannot transit from $S_i$ to $S_j$ without going through $K$ for $i\neq j$ and so that the estimates in Section \ref{procbeh} hold with $\Gamma_{\varkappa_0}$ as the outer layer. Note that $K$ is a compact set when $D$ is bounded. For an unbounded domain $D$, we can define $K:={\rm cl} (D\setminus V_{0,\varkappa_0})\bigcap F$ for a sufficiently large closed ball $F$. Using Assumption (e), it is easy to show that all the results in this section hold with the starting point $X_0^\varepsilon\in K$ even if $D$ is unbounded.

\subsection{Transition probabilities for \texorpdfstring{$X_t^{\varepsilon}$}{TEXT}}
\label{transition probability in one domain}
 Let the transition probabilities to the surfaces be
$$
p_i^{x,\varepsilon} :=\mathrm{P}_x(X_{\tau^{\varepsilon}(\partial D)}^{\varepsilon} \in S_i),\quad i\in M,\, x\in K,
$$
where $\tau^{\varepsilon}(\partial D)$ is the first time the process $X_t^\varepsilon$ reaches $\partial D$.
\begin{theorem}
\label{proportion exit probability}
There exist positive constants $C_i,\rho_i$, ${i\in M}$, such that  
$$
p_i^{x,\varepsilon} \sim \frac{C_i\rho_i}{\sum_{j=1}^lC_j\rho_j}\varepsilon^{\gamma_1-\gamma_i}~~as~\varepsilon \downarrow 0,
$$
uniformly for $x \in K$. The numbers $C_i$ can be determined in terms of the coefficients of $L$ in $D$ and the numbers $\rho_i$ can be determined in terms of the coefficients of $L,\widetilde L$ in an arbitrarily small neighborhood of $S_i$.
\end{theorem}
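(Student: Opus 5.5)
The plan is to decompose the path of $X^\varepsilon_t$ from $K$ into successive excursions between $K$ and the thin layers near the surfaces, and to compute the probability, per excursion, of reaching each $S_i$. Fix a small $\varkappa \le \varkappa_0$ and consider the Markov chain of successive visits to $\Gamma_\varkappa = \bigcup_j \Gamma^j_\varkappa$, each taken after a return to $K$. Starting from $x\in\Gamma^i_\varkappa$, Lemma~\ref{transition probability to surface} gives that $X^\varepsilon$ hits $S_i$ before returning to $\Gamma^i_{\varkappa'}$ ($\varkappa'$ slightly larger) with probability $\sim \rho_i \varkappa^{\gamma_i}\varepsilon^{-\gamma_i}$. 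Since $\gamma_i<0$, this is of order $\varepsilon^{-\gamma_i}\to 0$. The constant $\rho_i$ is exactly the one from that lemma, and it depends only on $L,\widetilde L$ near $S_i$.

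To remove the dependence on $\varkappa$ and on the starting point, I use the invariant measure. Over a cycle $K\to\Gamma_\varkappa\to K$, let $q_i(\varkappa)$ be the limiting ($\varepsilon\downarrow 0$) probability that the unperturbed process, started from $K$, enters $\Gamma^i_\varkappa$ at the end of the cycle. Then, per cycle, the chance of exiting through $S_i$ is
$$
a_i^\varepsilon \approx q_i(\varkappa)\,\rho_i\,\varkappa^{\gamma_i}\varepsilon^{-\gamma_i}.
$$
By the Hasminskii formula and Theorem~\ref{invariant measure multi-surfaces}, the flux of $\mu$ through $\Gamma^i_\varkappa$ behaves like $c_i\varkappa^{-\gamma_i}$ times a factor that does not depend on $\varkappa$ (compare the occupation-time estimates in the proof of Lemma~\ref{uniform bound on g}). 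This should give $q_i(\varkappa)\varkappa^{\gamma_i}\to C_i/Z$ as $\varkappa\downarrow 0$, where $C_i$ is built from $c_i$, $\psi_i$, $\varphi_i$, $\alpha$ and is therefore determined by $L$ in $D$; the normalization $Z$ cancels. The exit probability is then a geometric-trials ratio:
$$
p_i^{x,\varepsilon} = \frac{a_i^\varepsilon}{\sum_j a_j^\varepsilon}\,(1+o(1)).
$$
In the denominator the terms with $j\le l$ dominate, since they carry the largest power $\varepsilon^{-\gamma_1}$. This yields $\frac{C_i\rho_i}{\sum_{j\le l}C_j\rho_j}\varepsilon^{\gamma_1-\gamma_i}$.

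Uniformity in $x\in K$ follows because the number of cycles is of order $\varepsilon^{\gamma_1}$, which is large, while the first cycle has probability $O(\varepsilon^{-\gamma_1})$ of exiting. The conditional distribution of the entrance point then mixes, via (i) of Lemma~\ref{ellipticity estimation}, so the initial position is forgotten. One also needs the perturbed entrance distribution on $\Gamma_\varkappa$ to converge to the unperturbed one. This holds because on $K\cup V_{\varkappa,\varkappa_0}$ the generator $L^\varepsilon$ is a regular perturbation of a nondegenerate $L$.

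The main obstacle is the order of limits. The error $\eta$ in Lemma~\ref{transition probability to surface} requires $\varkappa$ small first and then $\varepsilon$ small, while the identification $q_i(\varkappa)\varkappa^{\gamma_i}\to C_i/Z$ is a $\varkappa\downarrow 0$ limit for the unperturbed process. I would first show that $q_i(\varkappa)\varkappa^{\gamma_i}$ is nearly constant in $\varkappa$, using Lemma~\ref{unperturbed transition probability} between $\Gamma_{\varkappa}$ and $\Gamma_{\varkappa_0}$. I would then fix $\varkappa$ small according to $\eta$, and finally let $\varepsilon\downarrow 0$. Uniformity in the entry point on $\Gamma^i_\varkappa$ comes from the fact that Lemma~\ref{transition probability to surface} depends only on $\zeta$.
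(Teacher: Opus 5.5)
Your plan is essentially the paper's proof: excursions between $K$ and $\Gamma_\varkappa$, Khasminskii's formula together with Theorem~\ref{invariant measure multi-surfaces} for the entrance probabilities $q_i(\varkappa)$, Lemma~\ref{transition probability to surface} for the last leg to $S_i$, and the ratio of per-cycle exit probabilities, with $\varkappa$ chosen before $\varepsilon\downarrow 0$. Two corrections are needed. First, the last leg must be measured against the return to $\Gamma^i_{\varkappa_0}$ rather than a \emph{slightly larger} layer, since Lemma~\ref{transition probability to surface} requires $\zeta\le s_2\varkappa'$ with $s_2$ small. Second, $q_i(\varkappa)\varkappa^{\gamma_i}$ itself grows like $\varkappa^{\gamma_1}$, so only its ratios across $i$ stabilize, and the per-visit occupation time in Khasminskii's formula is only known to be bounded above and below; the paper therefore extracts a subsequence $\varkappa_k$, which suffices because $p_i^{x,\varepsilon}$ does not depend on $\varkappa$.
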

\begin{proof}
Using the strong Markov property, we estimate $p_i^{x,\varepsilon}$ by studying the transition from $K$ to $\Gamma_{\varkappa}$ and the transition from $\Gamma_{\varkappa}$ to $\partial D$, where $0<\varkappa<\varkappa_0$ will be specified later.

Let us study the first part of the transition. For the unperturbed process $X_t$ starting in $K$, let us define a series of random times for excursions between $K$ and $\Gamma_{\varkappa}$:
$$\sigma_0^\varkappa = \tau_0^\varkappa = 0,\quad \sigma_n^\varkappa = \inf\{t>\tau_{n-1}^\varkappa| X_{t}\in \Gamma_{\varkappa}\},\, \tau_{n}^\varkappa = \inf\{t>\sigma_{n}^\varkappa|X_t\in \Gamma_{\varkappa_0}\},\quad n\geq 1.$$
Later, we will use the notation $\sigma_n^{\varkappa,\varepsilon}, \tau_n^{\varkappa,\varepsilon}$ for similar stopping times for the process $X^\varepsilon_t$. Observe that the process $X_t$ is non-degenerate in ${\rm cl}(D \setminus V_{0, \varkappa_0/2})$. By $(i)$ in Lemma \ref{ellipticity estimation}, the measures induced by $X_{\tau_n^{\varkappa_0/2}}$ converge in total variation, exponentially fast in $n$, to a limiting measure $\nu$. (Here, the transitions are between $\Gamma_{\varkappa_0}$ and $\Gamma_{\varkappa_0/2}$ rather  than between $\Gamma_{\varkappa_0}$ and $\Gamma_\varkappa$ with generic $\varkappa.)$

By Lemma $4$, for each $n$, $\lim_{\varkappa\downarrow 0} \mathrm{P}_x(\tau_n^{\varkappa_0/2}<\tau(\Gamma_{\varkappa}))=1$. From the non-degeneracy, it also follows that there is a constant $C$ such that
\[
\sup_{x \in K} \mathrm{P}_x (\tau(\Gamma_\varkappa^i)<\tau_1^{\varkappa_0/2}) \leq C \inf_{x \in K} \mathrm{P}_x (\tau(\Gamma_\varkappa^i)<\tau_1^{\varkappa_0/2}),\quad i\in M.
\]
From these properties (exponential convergence of measures, the probability of early escape to $\Gamma_\varkappa$ going to zero, and a uniform  estimate on the probability of escape in one excursion), it easily follows that 
\begin{equation}
\label{mixing measure on Gamma kappa}
\lim_{\varkappa\downarrow 0} \frac{|\mathrm{P}_x(X_{\sigma_1^\varkappa}\in\Gamma_\varkappa^i)-\mathrm{P}_\nu(X_{\sigma_1^\varkappa}\in\Gamma_\varkappa^i)|}{\mathrm{P}_\nu(X_{\sigma_1^\varkappa}\in\Gamma_\varkappa^i)} =0,
\end{equation}
uniformly for $x\in K, i\in M$. 

Let $q^{x,\varkappa}_i = \mathrm{P}_x(X_{\sigma_1^\varkappa}\in\Gamma_\varkappa^i)$, $x \in K$. We just proved that the asymptotics of $q^{x,\varkappa}_i,\, i\in M$, as $\varkappa \downarrow 0$,
do not depend on $x$. Consider the Markov chain $X_{\tau^\varkappa_n}$ on $K$, and denote its invariant measure by $\nu_\varkappa$. Starting on $\Gamma_{\varkappa_0}$ with the measure $\nu_\varkappa$, let the measure on $\Gamma_\varkappa$ induced by $X_{\tau(\Gamma_\varkappa)}$ be denoted by $\pi_\varkappa$, and let the normalized restriction of $\pi_\varkappa$ to $\Gamma^i_\varkappa$ be denoted by $\pi^i_\varkappa$. Note that $\pi_\varkappa(\Gamma_\varkappa^i) = \int_{K}q_i^{x,\varkappa}d\nu_\varkappa(x)$. And by definition, we have
$$ (\int_{K}q_i^{\cdot,\varkappa}d\nu_\varkappa)
d\pi_\varkappa^i(x)   = d\pi_\varkappa(x),\quad x\in\Gamma_\varkappa^i.
$$
Let $T^i_\varkappa(x)$ be the occupation time in $V^i_{\varkappa/2, 
\varkappa}$ before hitting $K$ for the process $X_t$ starting at $x$. Observe that the probability to reach the set $V^i_{\varkappa/2, \varkappa}$ times the expected occupation time divided by the average excursion time is equal to the invariant measure of the set (Khasminskii's formula, see Theorem 2.1 in \cite{khas1960ergodic}), i.e., 
\[
\frac{\int_{\Gamma_\varkappa} \mathrm{E}_x T_{\varkappa}^i(x)d\pi_\varkappa(x)}{ \mathrm{E}_{\nu_\varkappa} \tau^\varkappa_1} =  \mu(V^i_{\varkappa/2, \varkappa}),\quad i \in M.
\]
Let us rewrite this in a slightly more convenient way:
\[
{ \int_K q^{x,\varkappa}_i d \nu_\varkappa(x)     =  \mathrm{E}_{\nu_\varkappa} \tau^\varkappa_1 \cdot   \mu(V^i_{\varkappa/2, \varkappa}) \cdot (\int_{\Gamma^i_\varkappa} \mathrm{E}_xT^i_\varkappa(x) d \pi_\varkappa^i(x)})^{-1}. 
\]
The first factor on the right-hand  side does not depend on $i$ (and that is the only property that we need of it). 
We have obtained the asymptotics for the second factor on the right-hand side in Theorem \ref{invariant measure multi-surfaces}. Namely, there exist positive constants $\bar c_i$ such that
$$
\lim_{\varkappa\downarrow 0}\frac{\mu(V^i_{\varkappa/2, \varkappa})}{\varkappa^{-\gamma_i}} = \bar c_i,\quad i\in M.
$$
Recall that we obtained upper and lower bounds on $\mathrm{E}_x T^i_\varkappa(x),\, x\in \Gamma_\varkappa^i$, in the proof of Lemma \ref{uniform bound on g}. Therefore, we can find a subsequence $\varkappa_k \downarrow 0$ such that
\[
\lim_{k \rightarrow \infty} 
(\int_{\Gamma^i_{\varkappa_k}} \mathrm{E}_x T^i_{\varkappa_k}(x) d \pi_{\varkappa_k}^i(x))^{-1} = c_i
\]
for each $i$. Combining these asymptotic results with the fact that
\[
\sum_{j\in M} \int_K q^{x,\varkappa}_j d \nu_\varkappa(x) =  1,
\]
for each $\varkappa$, we obtain
\[
\int_K q^{x,\varkappa_k}_i d \nu_{\varkappa_k}(x) \sim \frac{C_i \varkappa^{-\gamma_i}_k}{\sum_{j\in M}\bar C_j \varkappa^{-\gamma_j}_k},\quad i\in M,
\]
as $k \rightarrow \infty$, where $C_i = c_i\bar c_i,\, i\in M$. Since the asymptotics of $q^{x,\varkappa_k}_i,\, i\in M$
do not depend on $x$, we obtain
\begin{equation}
\label{asymptotic probability to kappa}
q^{x,\varkappa_k}_i \sim \frac{C_i \varkappa^{-\gamma_i}_k}{\sum_{j\in M}C_j \varkappa^{-\gamma_j}_k},\quad i\in M,
\end{equation}
uniformly in $x \in K$ as $k\rightarrow \infty$. Note that the constants $C_i$ are  determined by the coefficients of $L$ in $D$. Note that the processes $X_t$ and $X_t^\varepsilon$ are close in finite time intervals for all $\varepsilon$ sufficiently small. Thus, for each $\varkappa>0$, since $X_t$ is non-degenerate in ${\rm cl} (D\setminus V_{0,\varkappa})$, for each $\delta>0$, we have
$$
|\mathrm{P}_x(X_{\sigma_1^{\varkappa,\varepsilon}}^\varepsilon\in \Gamma_{\varkappa}^i) - q_i^{x,\varkappa}|/ q_i^{x,\varkappa} <\delta,\quad x\in K,\, i\in M,
$$
for all $\varepsilon$ sufficiently small. From (\ref{asymptotic probability to kappa}), for each $\delta>0$, we have
\begin{equation}
\label{first part transition}
|\mathrm{P}_x(X_{\sigma_1^{\varkappa_k,\varepsilon}}^\varepsilon\in \Gamma_{\varkappa_k}^i) - \frac{C_i \varkappa^{-\gamma_i}_k}{\sum_{j\in M}C_j \varkappa^{-\gamma_j}_k}|/\frac{C_i \varkappa^{-\gamma_i}_k}{\sum_{j\in M}C_j \varkappa^{-\gamma_j}_k}<\delta,\quad i\in M,
\end{equation}
uniformly in $x\in K$, for all $k$ sufficiently large, for all $\varepsilon$ sufficiently small (dependent of $\varkappa_k$).

Let us study the transition from $\Gamma_\varkappa$ to $\partial D$. The transition probability $\mathrm{P}_x(X_{\tau(S_i\bigcup \Gamma_{\varkappa_0}^i)}^\varepsilon \in S_i),\, x\in\Gamma_\varkappa^i$, has been estimated in Lemma \ref{transition probability to surface} with an error $\eta$ that can be made arbitrarily small by choosing small $\varkappa_0$, then choosing small $\varkappa$, and then choosing small $\varepsilon$. In fact, we can show that $\eta$ can be made arbitrarily small, while a fixed value of $\varkappa_0$ is considered, $\varkappa$ is taken sufficiently small (depending on $\eta$) and $\varepsilon$ is sufficiently small (depending on $\eta$, $\varkappa$). 
This generalization is easy to obtain from the fact that the process that reaches $\Gamma^i_{\varkappa_1}$ will proceed to $\Gamma^i_{\varkappa_2}$ before reaching $\Gamma_\varkappa^i$ with probability that can be arbitrarily close to one by taking $\varkappa$ sufficiently small, $\varepsilon$ sufficiently small (depending on $\varkappa$), for any $0 < \varkappa_1 < \varkappa_2$. Thus, for each $\eta>0$, there exists $\varkappa(\eta)$ such that
\begin{equation}
\label{second part transition}
|\mathrm{P}_x(X_{\tau(S_i\bigcup \Gamma_{\varkappa_0}^i)}^\varepsilon \in S_i) -\rho_i \varepsilon^{-\gamma_i} \varkappa^{\gamma_i}|/\rho_i \varepsilon^{-\gamma_i} \varkappa^{\gamma_i} \leq \eta,\quad x\in\Gamma_\varkappa^i,
\end{equation}
for all $0<\varkappa<\varkappa(\eta)$, for all $\varepsilon$ sufficiently small (depending on $\varkappa,\eta$). The constants $\rho_i$ can be determined in terms of the coefficients of $L,\widetilde L$ in an arbitrarily small neighborhood of $S_i$ (see Theorem 3.11 in \cite{freidlin2023perturbations}).

Now, let us put the estimates on the probabilities of the two stages of the transition together. Note that $p_i^{x,\varepsilon}$ satisfies
\begin{equation}
\label{proportion transition partition}
p_i^{x,\varepsilon}=\sum_{n=1}^\infty \mathrm{P}_x(\tau_{n-1}^{\varkappa,\varepsilon}<\tau^\varepsilon(\partial D))\cdot \mathrm{P}_x(X_{\sigma_n^{\varkappa,\varepsilon}}^\varepsilon\in \Gamma_\varkappa^i, \tau^{\varepsilon}(\partial D)<\tau_n^{\varkappa,\varepsilon}|\tau_{n-1}^{\varkappa,\varepsilon}<\tau^\varepsilon(\partial D)),
\end{equation}
for $x\in K, i\in M$. From (\ref{first part transition}) and (\ref{second part transition}), for each $\delta$, the second factor satisfies
$$
|\mathrm{P}_x(X_{\sigma_n^{\varkappa_k,\varepsilon}}^\varepsilon\in \Gamma_{\varkappa_k}^i, \tau^{\varepsilon}(\partial D)<\tau_n^{\varkappa_k,\varepsilon}|\tau_{n-1}^{\varkappa_k,\varepsilon}<\tau^\varepsilon(\partial D)) - \frac{C_i \rho_i \varepsilon^{-\gamma_i}}{\sum_{j\in M}C_j {\varkappa_k}^{-\gamma_j}}|/\frac{C_i \rho_i \varepsilon^{-\gamma_i}}{\sum_{j\in M}C_j {\varkappa_k}^{-\gamma_j}}<\delta,
$$
uniformly in $n\geq 1, x\in K$, for all $k$ sufficiently large, for all $\varepsilon$ sufficiently small (depending on $\varkappa_k$). Observe that $\sum_{j\in M}p_j^{x,\varepsilon} =1$ and the first factor in each term of the sum in (\ref{proportion transition partition}) does not depend on $i$. From this, by considering sufficiently small $\varkappa_k$, it follows that
$$
p_i^{x,\varepsilon} \sim \frac{C_i\rho_i \varepsilon^{-\gamma_i}}{\sum_{j=1}^l C_j \rho_j \varepsilon^{-\gamma_j}},\quad x\in K,\, i\in M,\, {\rm as}\,\, \varepsilon\downarrow 0,
$$
which completes the proof of the theorem.
\end{proof}

\noindent
\textbf{Remark.} The numbers $\rho_i$, which appear in (\ref{second part transition}), are the same on either side of the surfaces $S_i$. This follows from the fact that $\rho_i$ depends only on the leading terms of the coefficients of $L$ and $\tilde{L}$ near $S_i$ (see the proof of Lemma 3.8 in \cite{freidlin2023perturbations}).
\subsection{Exit time of \texorpdfstring{$X_t^{\varepsilon}$}{TEXT} to surfaces}
\label{exit time through domains}
The arguments concerning the exit time will rely on the following lemma. The lemma looks very formal but has a very simple meaning, which we explain here. Consider repeatedly playing rounds of a game with very little chance of success in each round. The probability of success is controlled by a parameter $\varepsilon$. In each round, we first roll an asymmetric $m$-sided die to determine the type of prize that could potentially be won in that round. The probability that the die shows face $i$ is $q_i(\varepsilon)$, for $i \in \{1,\dots,m\}$. If the $i$-th side is showing, we toss a highly biased coin that lands heads up with probability $p_i(\varepsilon)$. If the coin lands heads up, the game stops and the type-$i$ prize is awarded; otherwise, we proceed to the next round. We assume that the overall probability of success in each round is tiny compared to the probability of seeing any fixed face $i$. Formally, $\lim_{\varepsilon\downarrow 0}\sum_{j=1}^m q_j(\varepsilon)p_j(\varepsilon)/q_i(\varepsilon)=0$, for all $i\in \{1,...,m\}$. The lemma implies that the number of rounds needed to win the prize, conditioned on the type of prize won, is asymptotically exponentially distributed. Moreover, the asymptotics of the parameter in the exponential distribution does not depend on the type of the prize.  

The lemma is somewhat more general than the above description. Namely, we allow the probabilities of winning each type of prize and of success to depend slightly on the state of a system that evolves in a Markovian way. Moreover, each round is allowed to last for a random time that  also depends on $\varepsilon$ and the type of prize corresponding to this round. This random time is what necessitates the two-tier (random prize and random success) formulation of the game; otherwise, we could simply consider a game with small probabilities of winning one of the prizes. In the lemma, the time period consists of two components - before the type of possible prize is announced and between the announcement of the type of prize and the decision on whether we win this round or not. 
\begin{lemma}
\label{abstract game}
For $\varepsilon>0$, let $I^{\varepsilon}_n, J^{\varepsilon}_n, s^{\varepsilon}_n, t^{\varepsilon}_n,\,\, n\in \mathbb{Z}_+$, be four sequences of random variables adapted to a filtration $\mathcal{F}_n^{\varepsilon}$. Suppose that there is another filtration $\mathcal{G}_n^\varepsilon$ such that $\mathcal{F}_n^{\varepsilon}\subseteq \mathcal{G}_n^\varepsilon\subseteq \mathcal{F}_{n+1}^{\varepsilon}$ and such that $J_{n+1}^\varepsilon, s_{n+1}^\varepsilon$ are $\mathcal{G}_n^\varepsilon$-measurable. Suppose that $I^{\varepsilon}_n$ takes values in $\{0,1\}$, $I^{\varepsilon}_0=1$, and that $I^{\varepsilon}_k=0$ implies that $I^{\varepsilon}_n=0$ for $n\geq k$. Suppose that $J^{\varepsilon}_n$ takes values in $M:=\{1,...,m\}$. Suppose that $s^{\varepsilon}_n, t^{\varepsilon}_n$ take values in $(0,\infty)$. Suppose that there are positive functions $q_i(\varepsilon), \xi(\varepsilon), p_i(\varepsilon)$ such that
$$
\begin{aligned}
&\sum_{j\in M}q_j(\varepsilon) =1,\quad
\lim_{\varepsilon\downarrow 0}\mathrm{P}(J_{n+1}^\varepsilon = i|\mathcal{F}_{n}^{\varepsilon})/q_i(\varepsilon) = 1,\quad \lim_{\varepsilon\downarrow 0}\mathrm{E}(s^{\varepsilon}_{n+1}|\mathcal{F}_{n}^{\varepsilon})/\xi(\varepsilon) = 1,\\
&\lim_{\varepsilon\downarrow 0} \mathrm{E}(t_{n+1}^\varepsilon|\mathcal{F}_n^{\varepsilon})/\xi(\varepsilon) =0,\quad \lim_{\varepsilon\downarrow 0} \mathrm{P}(I_{n+1}^\varepsilon= 0|\mathcal{G}_{n}^\varepsilon)/p_i(\varepsilon) = 1\,\, \text{on}\,\,\{J_{n+1}^\varepsilon = i\},
\end{aligned}
$$
uniformly in $n\in \mathbb{Z}_+, i\in M$, and, in each of the formulas here and below, a version of the conditional expectation (which is defined up to a set of probability zero) can be taken such that the convergence is uniform with respect to the elements of the probability space. Let $p(\varepsilon) := \sum_{j\in M}q_j(\varepsilon) p_j(\varepsilon)$. In addition, we assume that
$$
\lim_{\varepsilon\downarrow 0} p(\varepsilon) /q_i(\varepsilon) = 0,\quad\lim_{\varepsilon\downarrow 0}\mathrm{E}(t_{n+1}^\varepsilon\chi_{\{I_{n+1}^\varepsilon=0\}}(\cdot)|\mathcal{G}_n^{\varepsilon})\frac{p(\varepsilon)}{p_i(\varepsilon)\xi(\varepsilon)}=0\,\, \text{on}\,\,\{J_{n+1}^\varepsilon = i\},
$$
uniformly in $n\in \mathbb{Z}_+, i\in M$. Let $N^{\varepsilon}:=\inf\{n: I^{\varepsilon}_n=0\}$. Let $E_i:=\{J^{\varepsilon}_{N^{\varepsilon}}=i\}$ and $T_i^{\varepsilon}$ be the random sum $\sum_{n=1}^{N^\varepsilon} (s^{\varepsilon}_{n} +t^{\varepsilon}_n)$ conditioned on $E_i$. We have the following results. 
\\
(i) For each $i\in M$,
$$ 
\lim_{\varepsilon\downarrow 0}\mathrm{E}T_i^{\varepsilon} \cdot p(\varepsilon)/\xi(\varepsilon) = 1.
$$
(ii) Assume that there is $C>0$ such that
$$
\mathrm{E}((s_{n+1}^\varepsilon)^2|\mathcal{F}_n^\varepsilon)\leq C(\xi(\varepsilon))^2,
$$
uniformly in $n\in\mathbb{Z}_+,\, i\in M$, for all $\varepsilon$ sufficiently small. Then we have, for each $t\in [0,\infty)$,
$$
\lim_{\varepsilon\downarrow 0}\mathrm{P}(\frac{p(\varepsilon)}{\xi(\varepsilon)}\cdot T_i^{\varepsilon}\geq t) = e^{-t}.
$$
\end{lemma}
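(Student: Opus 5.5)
We prove (i) by computing the two pieces of $\mathrm{E}(T^\varepsilon \chi_{E_i})$, and (ii) by a Laplace-transform argument on a geometric-type sum. The key observation is that, conditionally on the past, each round terminates with probability asymptotically $p(\varepsilon)$, uniformly in the history. Indeed, by the tower property through $\mathcal{G}_n^\varepsilon$,
\[
\mathrm{P}(I^\varepsilon_{n+1}=0\,|\,\mathcal{F}_n^\varepsilon)=\sum_{i\in M}\mathrm{E}\big(\chi_{\{J^\varepsilon_{n+1}=i\}}\mathrm{P}(I^\varepsilon_{n+1}=0|\mathcal{G}^\varepsilon_n)\,\big|\,\mathcal{F}^\varepsilon_n\big)\sim\sum_i q_i p_i=p(\varepsilon).
\]
Similarly, $\mathrm{P}(I^\varepsilon_{n+1}=0,\,J^\varepsilon_{n+1}=i\,|\,\mathcal{F}^\varepsilon_n)\sim q_ip_i$. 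Hence $\mathrm{P}(E_i)\sim q_ip_i/p$, and $N^\varepsilon$ is sandwiched between geometric variables with parameters $(1\pm\delta)p(\varepsilon)$.

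\emph{Step 1: the $s$-contribution for (i).} Write
\[
\mathrm{E}\Big(\chi_{E_i}\sum_{n\le N}s_n\Big)=\sum_{n\ge1}\mathrm{E}\big(s_n\chi_{\{N\ge n\}}\chi_{E_i}\big).
\]
For a fixed $n$, split according to whether $N=n$ or $N>n$. On $\{N>n\}$, the Markov-type structure gives
\[
\mathrm{P}(E_i\,|\,\mathcal{F}_n,N>n)\sim \frac{q_ip_i}{p},
\]
uniformly, because the future rounds see the same asymptotic conditional laws. Combining this with $\mathrm{E}(s_n\,|\,\mathcal{F}_{n-1})\sim\xi$ yields a contribution $\approx \xi\,\frac{q_ip_i}{p}\,\mathrm{P}(N\ge n)$ per round; summing, and using $\sum_n\mathrm{P}(N\ge n)\sim 1/p$, gives $\xi q_ip_i/p^2$. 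The subtle point is the correlation between $s_n$ and the event $\{N=n, J_n=i\}$. Since $s_n$ is $\mathcal{G}_{n-1}$-measurable and $\mathrm{P}(I_n=0|\mathcal{G}_{n-1})\le 2p_i$ on $\{J_n=i\}$, this term is at most $2p_i\,\mathrm{E}(s_n\chi_{J_n=i}\chi_{N\ge n})\le 2p_i\xi\,\mathrm{P}(N\ge n)$. After summing it is $O(\xi p_i/p)$, which is $o(\xi q_ip_i/p^2)$ precisely because $p/q_i\to0$. This is where the hypothesis $p(\varepsilon)/q_i(\varepsilon)\to0$ enters.

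\emph{Step 2: the $t$-contribution for (i).} The $t_n$ with $n<N$ contribute $o(\xi)\cdot\frac{q_ip_i}{p}\cdot\frac1p$ by the same argument, using $\mathrm{E}(t_{n+1}|\mathcal{F}_n)=o(\xi)$. The final $t_N$ on $\{J_N=i\}$ is controlled by the last hypothesis: summed over $n$, it is $o(p_i\xi/p)\cdot\frac{q_i}{p}\cdot$ (the expected number of rounds, up to a factor $p$), i.e.\ $o(\xi q_ip_i/p^2)$. Dividing by $\mathrm{P}(E_i)\sim q_ip_i/p$ gives (i).

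\emph{Step 3: the distributional limit (ii).} Compute $\mathrm{E}\big(e^{-\lambda (p/\xi) T}\chi_{E_i}\big)$ by conditioning round by round. Per round, $\mathrm{E}\big(e^{-\lambda p s/\xi}\,|\,\mathcal{F}\big)=1-\lambda p(1+o(1))$; the second-moment bound justifies the expansion $e^{-x}=1-x+O(x^2)$ with an error of $O(p^2)$. The $t$-terms change the factor only by $o(p)$, since $1-e^{-x}\le x$. Each round then contributes a factor $(1-p)(1-\lambda p)+o(p)$ if it continues, and terminates with $E_i$ with weight $\sim q_ip_i$, where again the final-round correlation is negligible by Step 1. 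Summing the geometric series gives
\[
\frac{q_ip_i}{p+\lambda p+o(p)}\;\to\;\frac{q_ip_i}{p}\cdot\frac{1}{1+\lambda}.
\]
Normalizing by $\mathrm{P}(E_i)$ yields the Laplace transform of $\mathrm{Exp}(1)$, and the continuity theorem gives (ii). The main obstacle is making the uniform error bookkeeping rigorous in the infinite products and sums. I would handle this by proving two-sided bounds with $(1\pm\delta)$ factors for each $\delta$, valid for all sufficiently small $\varepsilon$ uniformly in $n$ and $\omega$, and then letting $\delta\to0$.
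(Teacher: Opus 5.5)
Your proof is correct, but it is self-contained where the paper is not. The paper only remarks that (i) is a simple calculation, which is exactly what your Steps 1--2 carry out. For (ii) it says the claim follows from a generalization of Lemma 6.8 of \cite{koralov2024metastable}, applied to each $T_i^\varepsilon$ separately. That route presumably works under $\mathrm{P}(\cdot\,|\,E_i)$ and checks that the conditioned rounds still satisfy the hypotheses of the cited exponential-limit lemma for geometric-type random sums. This holds because $\mathrm{P}(E_i|\mathcal{F}_n)\sim q_ip_i/p$ is asymptotically constant on $\{N^\varepsilon>n\}$: the conditioned termination probability stays $\sim p$, the conditioned round lengths still have mean $\sim\xi$, and $p/q_i\to0$ together with the last hypothesis controls the terminal round.

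You avoid the change of measure by expanding $\mathrm{E}(e^{-\lambda pT/\xi}\chi_{E_i})$ over the termination round, with continuation factors $1-(1+\lambda)p+o(p)$ and terminal factor $q_ip_i(1+o(1))$. This makes clear where each hypothesis enters; in particular, the second-moment bound is needed only for the expansion of $e^{-x}$. The cost is bookkeeping that the paper outsources to the cited lemma.

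One detail should be stated explicitly. Since $s_n$ is $\mathcal{G}_{n-1}$-measurable, it is correlated with $I_n$. Two places need $\max_j p_j\to0$ because of this: the lower bound for the $\{N^\varepsilon>n\}$ part in Step 1, and the cross term $\mathrm{E}\big(\lambda (p/\xi) s_n\,\mathrm{P}(I_n=0|\mathcal{G}_{n-1})\,\big|\,\mathcal{F}_{n-1}\big)$ in the continuation factor of Step 3. The required fact follows from $p_j\le p/q_j\to0$. You only treated this correlation on the terminal event $\{N^\varepsilon=n,\,J_n=i\}$.
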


We will not give details of the proof here, other than to say that  (i) is a simple calculation and (ii) is a generalization of Lemma 6.8 in \cite{koralov2024metastable} (the result needs to be applied to each $T_i^{\varepsilon},\, i\in M$ separately).

In the next two lemmas, we will study $T_i^\varepsilon$, the exit time from a domain $D$ conditioned on $X_t^\varepsilon$ exiting through a particular surface $S_i$. More precisely, let $E_i : = \left\{ X_{\tau^\varepsilon(\partial D)}^{\varepsilon} \in S_i \right\},\, i \in M$, and $T^{\varepsilon}_i$ be the exit time $\tau^\varepsilon(\partial D)$ conditioned on $E_i$. We will show that, if the process starts in $K$,  $T_i^\varepsilon$ satisfies the assumptions of Lemma \ref{abstract game}, and thus the results in Lemma \ref{abstract game} hold for $T_i^\varepsilon$.
\begin{lemma}
\label{expectation exit time}
There is a positive constant $C$ such that
$$
\mathrm{E}_x T_i^\varepsilon \sim \frac{C}{\sum_{j=1}^lC_j \rho_j} \varepsilon^{\gamma_1},~~i \in M,\,as~\varepsilon \downarrow 0,
$$
uniformly in $x\in K$; moreover, $C$ is determined in terms of the coefficients of $L$ in $D$ and $C_i,\rho_i$ are the same as in Theorem \ref{proportion exit probability}.
\end{lemma}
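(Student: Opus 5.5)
We will cast the exit from $D$ into the framework of Lemma \ref{abstract game}. Fix small $\varkappa_0$ as in the definition of $K$ and a small $\varkappa<\varkappa_0$ (later taken along the sequence $\varkappa_k$ from the proof of Theorem \ref{proportion exit probability}). We use the excursion times $\sigma_n^{\varkappa,\varepsilon},\tau_n^{\varkappa,\varepsilon}$.

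\emph{Setting up the game.} Let $\mathcal{F}_n^\varepsilon$ be the $\sigma$-algebra at time $\tau_n^{\varkappa,\varepsilon}$ and $\mathcal{G}_n^\varepsilon$ the one at time $\sigma_{n+1}^{\varkappa,\varepsilon}$. Put $J_{n+1}^\varepsilon=i$ if $X^\varepsilon_{\sigma_{n+1}^{\varkappa,\varepsilon}}\in\Gamma^i_\varkappa$, and $s_{n+1}^\varepsilon=\sigma_{n+1}^{\varkappa,\varepsilon}-\tau_n^{\varkappa,\varepsilon}$. Let $t_{n+1}^\varepsilon$ be the time from $\sigma_{n+1}^{\varkappa,\varepsilon}$ until either $\Gamma_{\varkappa_0}$ or $\partial D$ is hit. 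Finally, $I_{n+1}^\varepsilon=0$ if $\partial D$ is hit first (and $I$ stays $0$ afterwards). Then $N^\varepsilon$ counts excursions, $E_i$ is exit through $S_i$, and $\sum_{n\le N^\varepsilon}(s_n+t_n)=\tau^\varepsilon(\partial D)$.

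We take $q_i=C_i\varkappa^{-\gamma_i}/\sum_j C_j\varkappa^{-\gamma_j}$ by (\ref{first part transition}) and $p_i=\rho_i\varepsilon^{-\gamma_i}\varkappa^{\gamma_i}$ by (\ref{second part transition}). Each holds with relative error $\delta$ uniformly in the starting point, by the strong Markov property. Hence $p(\varepsilon)=\sum_j C_j\rho_j\varepsilon^{-\gamma_j}/\sum_jC_j\varkappa^{-\gamma_j}\sim \varepsilon^{-\gamma_1}\sum_{j\le l}C_j\rho_j/\sum_jC_j\varkappa^{-\gamma_j}$. Clearly $p/q_i\to0$ since $p\to 0$ with $\varkappa$ fixed.

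\emph{Time estimates.} Define $\xi_\varkappa:=\mathrm{E}_{\nu_\varkappa}\tau_1^\varkappa$ for the unperturbed process. We need $\mathrm{E}(s_{n+1}|\mathcal F_n)\sim\xi(\varepsilon)$ with $\xi(\varepsilon)$ independent of the start. The time from $\Gamma_{\varkappa_0}$ to $\Gamma_\varkappa$ for $X^\varepsilon$ converges, as $\varepsilon\downarrow0$, to that of $X_t$, since away from $V_{0,\varkappa}$ the processes are close and non-degenerate. Mixing as in (\ref{mixing measure on Gamma kappa}) makes this asymptotically independent of $x\in K$; note that $\sigma_1^\varkappa\to\infty$ as $\varkappa\downarrow0$, while each $K$-excursion mixes. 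Second moment bounds follow from exponential tails of hitting times in the non-degenerate region.

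For $t_{n+1}$, Lemma \ref{perturbed expectation time} gives $\mathrm{E}\,t_{n+1}\le b(1+\ln(\varkappa_0/\varkappa))$. We must show this is $o(\xi)$. Here $\xi\sim \mathrm{E}\sigma_1^\varkappa$ grows like $\varkappa^{\gamma_1}$, because reaching $\Gamma_\varkappa$ needs about $\varkappa^{\gamma_1}$ excursions, by Lemma \ref{unperturbed transition probability}. So $\ln(1/\varkappa)/\varkappa^{\gamma_1}\to0$, and this can be forced by taking $\varkappa$ small. The conditional requirement on $\{I=0\}$ also follows from Lemma \ref{perturbed expectation time}, combined with the lower bound on $p_i$. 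Strictly, the lemma needs ratios tending to $0$ or $1$ as $\varepsilon\downarrow0$ with $\varkappa$ fixed. Small-$\varkappa$ errors will be handled by an outer $\delta$-argument: obtain bounds within factor $1\pm\delta(\varkappa)$, then let $\varkappa\downarrow0$.

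\emph{Conclusion and the main obstacle.} Lemma \ref{abstract game}(i) gives $\mathrm{E}_xT_i^\varepsilon\sim\xi/p$, which equals
\[
\xi_\varkappa\sum_jC_j\varkappa^{-\gamma_j}\,\varepsilon^{\gamma_1}\Big/\sum_{j\le l}C_j\rho_j .
\]
The main obstacle is showing that $C:=\lim_{\varkappa\downarrow0}\xi_\varkappa\sum_jC_j\varkappa^{-\gamma_j}$ exists, is positive, and does not depend on the choice of subsequence. For this we use Khasminskii's formula from the proof of Theorem \ref{proportion exit probability}:
\[
\int_K q_i^{x,\varkappa}d\nu_\varkappa=\xi_\varkappa\,\mu(V^i_{\varkappa/2,\varkappa})\Big(\int\mathrm{E}T^i_\varkappa\,d\pi^i_\varkappa\Big)^{-1}.
\]
Summing over $i$ gives $1=\xi_\varkappa\sum_i \mu(V^i_{\varkappa/2,\varkappa})/\int\mathrm{E}T^i_\varkappa d\pi^i_\varkappa$, which is asymptotic to $\xi_\varkappa\sum_iC_i\varkappa^{-\gamma_i}$ along $\varkappa_k$. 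Hence $\xi_{\varkappa_k}\sum_iC_i\varkappa_k^{-\gamma_i}\to1$, and after normalisation $C$ is determined by $L$ in $D$. Since the final $\varepsilon$-asymptotics cannot depend on the auxiliary $\varkappa$, the limit is consistent along the subsequence. This also confirms that the prefactor is independent of $i$.
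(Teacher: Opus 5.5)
Your reduction to Lemma \ref{abstract game}(i) is the same as the paper's: the same $I,J,s,t$, the same $q_i,p_i$, and the same treatment of $\mathrm{E}\,s_{n+1}$ and $\mathrm{E}\,t_{n+1}$. Your identification of the constant is actually sharper than the paper's. Summing Khasminskii's identity over $i$ gives $\xi_{\varkappa_k}\sum_jC_j\varkappa_k^{-\gamma_j}\to1$. So $C=1$ for the normalization $C_j=c_j\bar c_j$ used in the proof of Theorem \ref{proportion exit probability}, whereas the paper only extracts some $c_3$ by compactness.

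The gap is in the last hypothesis of Lemma \ref{abstract game}, which you dispose of with ``Lemma \ref{perturbed expectation time} combined with the lower bound on $p_i$''. Since $p/\xi\approx\varepsilon^{-\gamma_1}\sum_{j\le l}C_j\rho_j$, the weight in that hypothesis is
\[
\frac{p}{p_i\,\xi}\asymp\varepsilon^{\gamma_i-\gamma_1}\varkappa^{-\gamma_i}.
\]
For fixed $\varkappa$ this tends to $+\infty$ as $\varepsilon\downarrow0$ for every $i>l$, i.e. whenever $\gamma_i<\gamma_1$. The bound $\mathrm{E}_x t_{n+1}\le b(1+\ln(\varkappa_0/\varkappa))$ controls only the unconditional mean and cannot offset this factor. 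What is needed is that $\mathrm{E}_x\bigl(\tau^\varepsilon(S_i)\chi_{\{\tau^\varepsilon(S_i)<\tau^\varepsilon(\Gamma^i_{\varkappa_0})\}}\bigr)$, $x\in\Gamma^i_\varkappa$, be $o(p_i\varepsilon^{\gamma_1})$. Equivalently, the final stretch, \emph{conditioned} on the rare event of reaching $S_i$ (probability $\asymp\varepsilon^{-\gamma_i}\varkappa^{\gamma_i}$), must last $o(\varepsilon^{\gamma_1})$. A bound on the mean says nothing about the time spent on such an event.

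Your argument does cover $i\le l$: there the weight is $\asymp\varkappa^{-\gamma_1}$ and $\varkappa^{-\gamma_1}\ln(1/\varkappa)\to0$. But the lemma is claimed for all $i\in M$, and it is used for every neighbouring domain in the semi-Markov construction of Section \ref{vague version main result}.

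The paper supplies exactly this missing estimate as (\ref{last step exit time}). It uses a supersolution $c\,\varepsilon^{-\hat\gamma}\hat\varphi(y)z^{\hat\gamma}$ with $M\hat v=-\lambda\hat v$ and $\hat\gamma\in(\gamma_i,0)$ chosen close to $\gamma_i$. This bounds the $\theta$-weighted time in $V^i_{R\varepsilon,\varkappa_0}$ by $c\,\varkappa^{\hat\gamma}\varepsilon^{-\hat\gamma}$. The stretch from $\Gamma^i_{R\varepsilon}$ to $S_i$ then adds $\theta\cdot O(|\ln\varepsilon|)$.

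A shorter repair within your framework:
\begin{enumerate}
\item Lemma \ref{perturbed expectation time} gives $\sup_{V^i_{0,\varkappa_0}}\mathrm{E}_x\tau\le m_\varepsilon:=b(1+\ln(\varkappa_0/\varepsilon))$ for $\tau=\tau^\varepsilon(S_i\cup\Gamma^i_{\varkappa_0})$.
\item By the Markov property (Kac's moment inequality), $\sup_x\mathrm{E}_x\tau^k\le k!\,m_\varepsilon^k$.
\item H\"older then gives $\mathrm{E}_x(\tau\chi_{\{X^\varepsilon_\tau\in S_i\}})\le k\,m_\varepsilon(2p_i)^{1-1/k}$.
\item Multiplied by the weight, this is $O\bigl(|\ln\varepsilon|\,\varepsilon^{|\gamma_1|-|\gamma_i|/k}\bigr)$ for fixed $\varkappa$, which tends to $0$ once $k>\gamma_i/\gamma_1$.
\end{enumerate}
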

\begin{proof}
For each excursion from $K$ to $\Gamma_\varkappa$ and back, we have $$
[\tau_{n-1}^{\varkappa,\varepsilon}, \tau_{n}^{\varkappa,\varepsilon}] = [\tau_{n-1}^{\varkappa,\varepsilon}, \sigma_n^{\varkappa,\varepsilon}]\cup (\sigma_n^{\varkappa,\varepsilon}, \tau_{n}^{\varkappa,\varepsilon}],\quad n\geq 1.
$$
We will estimate $\mathrm{E}_x T_i^\varepsilon$ using $(i)$ in Lemma \ref{abstract game}. More precisely, we will introduce a sequence $\varkappa\downarrow 0$   and consider all $0 < \varepsilon \leq \varepsilon_0(\varkappa)$ with $\varepsilon_0(\varkappa)$ going to zero sufficiently slowly as $\varkappa \downarrow 0$. For each pair $(\varkappa,\varepsilon)$, we estimate the total time that $X_t^\varepsilon$ spends during the excursions from $K$ to $\Gamma_\varkappa$ and back before reaching $\partial D$, conditioned on the event $E_i$. We will then take the joint limit in $\varkappa$ and $\varepsilon$, subject to $0<\varepsilon \leq \varepsilon_0(\varkappa)$, to obtain the asymptotics of $\mathrm{E}_x T_i^\varepsilon$ (note that this quantity does not depend on  $\varkappa$, but the introduction of $\varkappa$ is necessary for our analysis). Note that Lemma \ref{abstract game} is formulated using a single parameter $\varepsilon$, but it can be applied irrespective of the dimensionality of the parameter space.

Let us first clarify the definition of the quantities  $\mathcal{F}_n^{\varkappa,\varepsilon},\mathcal{G}_n^{\varkappa,\varepsilon},I_n^{\varkappa,\varepsilon},J_n^{\varkappa,\varepsilon},s_n^{\varkappa,\varepsilon},t_n^{\varkappa,\varepsilon}$ for this Lemma. Let $\mathcal{F}_n^{\varkappa,\varepsilon}$ be the filtration generated by $\{X_{t\wedge \tau_n^{\varkappa,\varepsilon}}^\varepsilon,\, t\geq 0\}$ and $\mathcal{G}_n^{\varkappa,\varepsilon}$ be the filtration generated by $\{X_{t\wedge \sigma_{n+1}^{\varkappa,\varepsilon}}^\varepsilon,\, t\geq 0\}$. Define
\[
\begin{aligned}
&I^{\varkappa,\varepsilon}_n = \chi_{\tau_{n}^{\varkappa,\varepsilon}<\tau^\varepsilon(\partial D)}(\cdot); ~~J^{\varkappa,\varepsilon}_n = j~~{\rm whenever}~
X_{\sigma_n^{\varkappa,\varepsilon}}^\varepsilon\in \Gamma_{\varkappa}^j;\\
&s^{\varkappa,\varepsilon}_n = 
\sigma_n^{\varkappa,\varepsilon} - \tau_{n-1}^{\varkappa,\varepsilon};~~ t^{\varkappa,\varepsilon}_n = \tau^{\varkappa,\varepsilon}_n \wedge \tau^\varepsilon(\partial D)-\sigma^{\varkappa,\varepsilon}_n.\\
\end{aligned}
\]
Now, let us check that the assumptions for $(i)$ in Lemma \ref{abstract game} are satisfied. Note that we have already estimated $\mathrm{P}_x(J_n^{\varkappa,\varepsilon}= i)$ and $\mathrm{P}_x(I_n^{\varkappa,\varepsilon} = 0|\mathcal{G}_n^{\varkappa,\varepsilon})$ on each event $\{J_n^{\varkappa,\varepsilon}=i\}$ for $x\in K, n\geq 1, i\in M$ in (\ref{first part transition}), (\ref{second part transition}). We provide the estimates here again for clarity. Let $q_i(\varkappa,\varepsilon):= C_i\varkappa^{-\gamma_i}/(\sum_{j\in M}C_j \varkappa^{-\gamma_i})$ and $p_i(\varkappa,\varepsilon):= \rho_i \varepsilon^{-\gamma_i}\varkappa^{\gamma_i}$ for $i\in M$. By definition, we have $p(\varkappa,\varepsilon) = (\sum_{j\in M}C_j \rho_j \varepsilon^{-\gamma_j})/(\sum_{j\in M}C_j \varkappa^{-\gamma_i})$. For each $\delta>0$, we have
\begin{equation}
    \label{transition probability for J}
    |\mathrm{P}_x(J_n^{\varkappa,\varepsilon}= i)-q_i(\varkappa,\varepsilon)|/ q_i(\varkappa,\varepsilon)<\delta,\quad x\in K, n\geq 1, i\in M,
\end{equation}
for all $\varkappa$ sufficiently small, for all $\varepsilon$ sufficiently small. For each $\delta>0$, we have
\begin{equation}
    \label{transition probability for I}
    |\mathrm{P}_x(I_n^{\varkappa,\varepsilon}= 0|\mathcal{G}_n^{\varkappa,\varepsilon})-p_i(\varkappa,\varepsilon)|/ p_i(\varkappa,\varepsilon)<\delta\,\, \text{on}\,\,\{J_{n+1}^\varepsilon = i\},\quad x\in K, n\geq 1, i\in M,
\end{equation}
for all $\varkappa$ sufficiently small, for all $\varepsilon$ sufficiently small.

Now, let us  estimate $\mathrm{E}_x s_n^{\varkappa,\varepsilon}$. Since $X_t^\varepsilon$ is non-degenerate in $D\setminus V_{0,\varkappa_0/2}$ and has bounded coefficients in $D$, by considering the excursions from $K$ to $\Gamma_{\varkappa_0/2}$ and back, and by Lemma \ref{perturbed transition probability} and Lemma \ref{perturbed expectation time}, there is $c_1,c_2>0$ such that
\begin{equation}
\label{estimate expectation time from kappa0 to kappa}
c_1 \varkappa^{\gamma_1}\leq \inf_{x\in K}\mathrm{E}_x \tau^\varepsilon(\Gamma_\varkappa)\leq \sup_{x\in K}\mathrm{E}_x \tau^\varepsilon(\Gamma_\varkappa)\leq c_2\varkappa^{\gamma_1},
\end{equation}
for all $\varkappa$ sufficiently small, for all $\varepsilon$ sufficiently small (depending on $\varkappa$). Let $\xi(\varkappa):= \mathrm{E}_\nu(\tau(\Gamma_\varkappa))$, where the measure $\nu$ on $\Gamma_{\varkappa_0}$ is defined in the proof of Lemma \ref{proportion exit probability} and $\tau$ is generated by $X_t$. By (\ref{estimate expectation time from kappa0 to kappa}), we can find a subsequence of $\varkappa_k$ of the sequence used in (\ref{asymptotic probability to kappa}) (still denoted as $\varkappa_k$) such that $\varkappa_k\downarrow 0$ as $k\to \infty$ and $c_3>0$ such that for each $\delta>0$, we have
\begin{equation}
\label{precise expectation time from kappa0 to kappa}
|\xi(\varkappa_k)-c_3\varkappa_k^{\gamma_1}|/(c_3\varkappa_k^{\gamma_1}) <\delta,
\end{equation}
for all $k$ sufficiently large. Moreover, similarly to (\ref{mixing measure on Gamma kappa}) and the discussion above it, for each $\delta>0$, we have
\begin{equation}
\label{first-type intervals}
|\mathrm{E}_x s^{\varkappa,\varepsilon}_n - \xi(\varkappa)|/\xi(\varkappa) <\delta,\quad x\in K,\,\, n\geq 1,
\end{equation}
for all $\varkappa$ sufficiently small, for all $\varepsilon$ sufficiently small (depending on $\varkappa$).

Now, let us estimate $\mathrm{E}_xt_n^{\varkappa,\varepsilon}$. By Lemma \ref{perturbed expectation time} and (\ref{estimate expectation time from kappa0 to kappa}), for each $\delta>0$, we have
\begin{equation}
\label{second type of interval}
\mathrm{E}_x t_n^{\varkappa,\varepsilon}/\xi(\varkappa)<\delta,\quad x\in K,\,\,n\geq 1,
\end{equation}
for all $\varkappa$ sufficiently small, for all $\varepsilon$ sufficiently small (depending on $\varkappa$).
 
Next, let us estimate $\mathrm{E}_x(t_n^{\varkappa,\varepsilon}\chi_{\{I_n^{\varkappa,\varepsilon}=0\}}(\cdot)|\mathcal{G}_n^{\varkappa,\varepsilon})$ on $\{J_n^{\varkappa,\varepsilon}=i\}$. By the definition of $t_n^{\varkappa,\varepsilon},I_n^{\varkappa,\varepsilon},\mathcal{G}_n^{\varkappa,\varepsilon}$, it is sufficient to estimate $\mathrm{E}_x (\tau^\varepsilon(S_i)\chi_{\{\tau^\varepsilon(S_i)< \tau^\varepsilon(\Gamma_{\varkappa_0}^i)\}}(\cdot)),\, x\in \Gamma_\varkappa^i$. Let us show that, for each $\hat{\gamma} \in(\gamma_i,0)$, there is $c_4>0$ such that 
\begin{equation}
\label{last step exit time}
\sup_{x\in \Gamma_\varkappa^i}\mathrm{E}_x (\tau^\varepsilon(S_i)\chi_{\{\tau^\varepsilon(S_i)< \tau^\varepsilon(\Gamma_{\varkappa_0}^i)\}}(\cdot)) \leq c_4 \varkappa^{\hat{\gamma}}\varepsilon^{-\hat{\gamma}},\quad i\in M,
\end{equation}
for all $\varkappa$ sufficiently small, for all $\varepsilon$ sufficiently small (depending on $\varkappa$). Before proving (\ref{last step exit time}), we observe that, by the definition of $p, p_i$, and by (\ref{last step exit time}), for each $\delta>0$,
\begin{equation}
\label{last step exit time assumption}
\mathrm{E}_x(t_n^{\varkappa,\varepsilon}\chi_{\{I_n^{\varkappa,\varepsilon}=0\}}(\cdot)|\mathcal{G}_n^{\varkappa,\varepsilon}) \frac{p(\varkappa,\varepsilon)}{p_i(\varkappa,\varepsilon)\xi(\varkappa)}<\delta,\,\,\text{on}\,\,\{J_{n+1}^\varepsilon = i\},\quad x\in K, n\geq 1, i\in M,
\end{equation}
for all $\varkappa$ sufficiently small, for all $\varepsilon$ sufficiently small (depending on $\varkappa$).

The estimate (\ref{last step exit time}) is similar to those in Lemmas \ref{unperturbed transition probability}-\ref{perturbed expectation time}: the probability of reaching $S_i$ from $\Gamma^i_\varkappa$ before going to $\Gamma^i_{\varkappa_0}$ behaves as $(\varkappa/\varepsilon)^{\gamma_i}$, while the time spent on this transition, conditioned on the event, should scale as $|\ln \varepsilon|  $, and $(\varkappa/\varepsilon)^{\gamma_i} |\ln \varepsilon| $ is estimated from above by the right-hand side in (\ref{last step exit time}). Let us sketch an independent proof. 

Let
$$
u(x):=\mathrm{E}_x (\tau^\varepsilon(S_i)\chi_{\{\tau^\varepsilon(S_i)< \tau^\varepsilon(\Gamma_{\varkappa_0}^i)\}}(\cdot)),\quad \theta(x):=\mathrm{P}_x(\tau^\varepsilon(S_i)< \tau^\varepsilon(\Gamma_{\varkappa_0}^i)).
$$
Using the Feynman-Kac formula, it is possible to see that $u(x)$ satisfies
$$
u(x) = \mathrm{E}_x \int_0^{\tau^\varepsilon(S_i\bigcup \Gamma_{\varkappa_0}^i)}\theta(X_s^\varepsilon)ds,\quad x\in V_{0,\varkappa_0}^i,\, i\in M.
$$
Let us first consider $v(x)=\mathrm{E}_x \int_0^{\tau^\varepsilon(\Gamma_{R\varepsilon}^i\bigcup \Gamma_{\varkappa_0}^i)}\theta(X_s^\varepsilon)ds$ with a shorter interval $V_{R\varepsilon,\varkappa_0}^i$. Here, $R>1$ is fixed and sufficiently large so that the estimate in Lemma \ref{transition probability to surface} holds for $\theta(x),\, x\in\Gamma_{R\varepsilon}^i$, with $\eta <1/2$. Note that $v(x)$ satisfies
$$
\begin{aligned}
    L^\varepsilon &v(x) = - \theta(x)\\
    v&|_{\Gamma_{R\varepsilon}^i\bigcup \Gamma_{\varkappa_0}^i} = 0.\\
\end{aligned}
$$
Similarly to the proof of Lemma 3.7 in \cite{Metastability}, for each $\hat{\gamma} \in (\gamma_i, 0)$, we can find a positive function $\hat{v}(y,z):= \hat{\varphi}(y) z^{\hat{\gamma}}$ such that   $\hat{v}$ satisfies, with $\lambda>0$, $M\hat{v} = -\lambda \hat{v}$, where $M$ has been introduced below Lemma \ref{lemmaspectral}. Thus, there is $c_5>0$ such that
$$
L^\varepsilon ( v- c_5\varepsilon^{-\hat{\gamma}}\hat{v})(x) \geq 0,\quad x\in V_{R\varepsilon,\varkappa_0}^i,
$$
for all $\varepsilon$ sufficiently small. This shows that $v(x) \leq c_6\varepsilon^{-\hat{\gamma}} \varkappa^{\hat{\gamma}}, \, x\in \Gamma_\varkappa^i$ with another $c_6>0$.

Note that $|u(x)-v(x)|\leq \mathrm{E}_x (\tau^\varepsilon(S_i\bigcup \Gamma_{\varkappa_0}^i) - \tau^\varepsilon(\Gamma_{R\varepsilon}^i\bigcup \Gamma_{\varkappa_0}^i))$, and by Lemma \ref{transition probability to surface} and Lemma \ref{perturbed expectation time}, we have $|u(x)-v(x)|= \theta(x) \cdot O(|\ln(\varepsilon)|),\, x\in \Gamma_\varkappa^i$. Thus, we get (\ref{last step exit time}) by the estimates of $v(x),\, |u(x)-v(x)|$. 

Now, we clarify the choice of $(\varkappa,\varepsilon)$ so that the terms on the left-hand sides of (\ref{transition probability for J}), (\ref{transition probability for I}), (\ref{precise expectation time from kappa0 to kappa}), (\ref{first-type intervals}), (\ref{second type of interval}), (\ref{last step exit time assumption}) have limit $0$ as $\varkappa\downarrow 0, \varepsilon\downarrow 0$. These limits, once established, will imply that the assumptions for $(i)$ in Lemma \ref{abstract game} are satisfied. By going to a further subsequence of $\varkappa_k$ (still denoted as $\varkappa_k$),  there is $\varepsilon_0(\varkappa_k)$ such that these estimates hold with the right-hand side ${1}/{k}$ with $\varkappa=\varkappa_k$ and for all $0<\varepsilon\leq\varepsilon_0(\varkappa_k)$. This implies that the assumptions in Lemma  \ref{abstract game} are satisfied and the conclusion of the lemma holds, i.e., 
\begin{equation}
\label{total expectation conditional exit time}
\lim_{\varkappa_k\downarrow 0,0<\varepsilon \leq \varepsilon_0(\varkappa_k)}|\mathrm{E}_xT_i^{\varepsilon} - \xi(\varkappa)\cdot \frac{\varepsilon^{\gamma_1}\sum_{j\in M}C_j {\varkappa}^{-\gamma_j}}{\sum_{j=1}^l C_l\rho_l}|/(\xi(\varkappa)\cdot \frac{\varepsilon^{\gamma_1}\sum_{j\in M}C_j {\varkappa}^{-\gamma_j}}{\sum_{j=1}^l C_l\rho_l})=0,
\end{equation}
uniformly in $x\in K,\, i\in M$. Note that from (\ref{precise expectation time from kappa0 to kappa}) we have
$$
\lim_{\varkappa_k\downarrow 0}\xi(\varkappa_k)\sum_{j\in M}C_j {\varkappa_k}^{-\gamma_j} = c_3\sum_{j=1}^l C_j=:C,
$$
where $C$ can be determined by the coefficients of $L$ in $D$. Thus, (\ref{total expectation conditional exit time}) implies that
$$
\lim_{\varkappa_k\downarrow 0,0<\varepsilon \leq \varepsilon_0(\varkappa_k)}|\mathrm{E}_xT_i^{\varepsilon} - \frac{C}{\sum_{j=1}^l C_l\rho_l}\varepsilon^{\gamma_1}|/(\frac{C}{\sum_{j=1}^l C_l\rho_l}\varepsilon^{\gamma_1})=0,
$$
uniformly in $x\in K,\, i\in M$. Observe that the quantity we are taking the limit of does not depend on $\varkappa$. This easily implies that 
$$
\lim_{\varepsilon\downarrow 0}|\mathrm{E}_xT_i^{\varepsilon} - \frac{C}{\sum_{j=1}^l C_l\rho_l}\varepsilon^{\gamma_1}|/(\frac{C}{\sum_{j=1}^l C_l\rho_l}\varepsilon^{\gamma_1})=0,
$$
uniformly in $x\in K,\, i\in M$, which completes the proof of the lemma.
\end{proof}
The following two Lemmas are proved in a similar way to Lemma \ref{expectation exit time}. We therefore only sketch their proofs.
The distribution of $T^{\varepsilon}_i$ has the following property.
\begin{lemma}
\label{exponential exit time}
For each $t\in [0,\infty)$, $T_i^{\varepsilon}$ satisfies
$$
\lim_{\varepsilon\downarrow 0} \mathrm{P}_x((\frac{C}{\sum_{j=1}^l C_j\rho_j} \varepsilon^{\gamma_1})^{-1}T_i^\varepsilon\geq t) = e^{-t},
$$
uniformly in $x\in K,\, i\in M$.
\end{lemma}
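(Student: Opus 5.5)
The plan is to apply part (ii) of Lemma~\ref{abstract game} to the same decomposition used in the proof of Lemma~\ref{expectation exit time}. The quantities $\mathcal{F}_n^{\varkappa,\varepsilon},\mathcal{G}_n^{\varkappa,\varepsilon},I_n^{\varkappa,\varepsilon},J_n^{\varkappa,\varepsilon},s_n^{\varkappa,\varepsilon},t_n^{\varkappa,\varepsilon}$ are defined exactly as there, through the excursions of $X^\varepsilon_t$ from $K$ to $\Gamma_\varkappa$ and back. The functions $q_i(\varkappa,\varepsilon)$, $p_i(\varkappa,\varepsilon)$ and $\xi(\varkappa)$ are also taken unchanged. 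All hypotheses of part (i) were already verified along a subsequence $\varkappa_k\downarrow 0$ and for $0<\varepsilon\le\varepsilon_0(\varkappa_k)$: these are (\ref{transition probability for J}), (\ref{transition probability for I}), (\ref{first-type intervals}), (\ref{second type of interval}) and (\ref{last step exit time assumption}). Moreover, $p(\varkappa,\varepsilon)/q_i(\varkappa,\varepsilon)\to 0$, since $p\sim \varepsilon^{-\gamma_1}\cdot O(\varkappa^{\gamma_1})$ tends to $0$ for fixed $\varkappa$ as $\varepsilon\downarrow 0$. Hence the only new hypothesis to check is the second-moment bound
\[
\mathrm{E}\bigl((s^{\varkappa,\varepsilon}_{n+1})^2\,|\,\mathcal{F}^{\varkappa,\varepsilon}_n\bigr)\le C\,\xi(\varkappa)^2 .
\]
By the strong Markov property, this amounts to showing $\sup_{x\in K}\mathrm{E}_x(\tau^\varepsilon(\Gamma_\varkappa))^2\le C\varkappa^{2\gamma_1}$ for all small $\varkappa$ and all $\varepsilon\le\varepsilon_0(\varkappa)$.

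To prove the second-moment bound, I will use the standard Kac-type estimate
\[
\mathrm{E}_x\tau^2\le 2\,\mathrm{E}_x\tau\cdot\sup_{x'}\mathrm{E}_{x'}\tau,
\]
which follows from writing $\tau^2=2\int_0^\tau(\tau-s)\,ds$ and applying the Markov property at time $s$. The supremum is taken over the region visited before $\tau^\varepsilon(\Gamma_\varkappa)$, namely $D\setminus V_{0,\varkappa}$. So it suffices to control $\sup_{x\in D\setminus V_{0,\varkappa}}\mathrm{E}_x\tau^\varepsilon(\Gamma_\varkappa)$ by $c\varkappa^{\gamma_1}$.

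\begin{itemize}
\item For starting points in $K$, this is (\ref{estimate expectation time from kappa0 to kappa}).
\item For starting points in $V_{\varkappa,\varkappa_0}$, first consider the time to exit to $\Gamma_\varkappa\cup\Gamma_{\varkappa_0}$. This time is at most $b(1+\ln(\varkappa_0/\varkappa))$ by Lemma~\ref{perturbed expectation time}, or by its analogue for the layer between $\Gamma_\varkappa$ and $\Gamma_{\varkappa_0}$, proved in the same way. That bound is $o(\varkappa^{\gamma_1})$. After this exit, the process either is on $\Gamma_\varkappa$ or is in $K$, where (\ref{estimate expectation time from kappa0 to kappa}) applies again.
\end{itemize}

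Combining the two cases gives the required uniform bound $c\varkappa^{\gamma_1}$, hence the second-moment bound. If $D$ is unbounded, Assumption (e) supplies the needed uniform bound on return times to a large ball.

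With all hypotheses of Lemma~\ref{abstract game}(ii) in place, I conclude that
\[
\mathrm{P}_x\Bigl(\frac{p(\varkappa,\varepsilon)}{\xi(\varkappa)}\,T_i^\varepsilon\ge t\Bigr)\to e^{-t}
\]
along the joint limit $\varkappa_k\downarrow 0$, $0<\varepsilon\le\varepsilon_0(\varkappa_k)$. The convergence is uniform in $x\in K$ and $i\in M$, since every estimate above is uniform in these variables. As in the end of the proof of Lemma~\ref{expectation exit time}, the normalization satisfies
\[
\frac{\xi(\varkappa_k)}{p(\varkappa_k,\varepsilon)}\Big/\Bigl(\frac{C}{\sum_{j=1}^l C_j\rho_j}\varepsilon^{\gamma_1}\Bigr)\to 1 ,
\]
so it can be replaced by $\frac{C}{\sum_{j\le l}C_j\rho_j}\varepsilon^{\gamma_1}$. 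The limit distribution $e^{-t}$ is continuous, so this replacement is harmless. Finally, the quantity $\mathrm{P}_x(\cdot)$ does not depend on $\varkappa$, so the joint limit yields the ordinary limit as $\varepsilon\downarrow 0$.

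The main obstacle is the uniform second-moment bound, in particular controlling $\mathrm{E}\,\tau^\varepsilon(\Gamma_\varkappa)$ uniformly over starting points in the layer $V_{\varkappa,\varkappa_0}$ without losing the order $\varkappa^{\gamma_1}$. The logarithmic bounds of Lemma~\ref{perturbed expectation time} are what I expect to make this work.
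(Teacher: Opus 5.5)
Your proposal is correct. Its skeleton is the same as the paper's. You apply part (ii) of Lemma~\ref{abstract game} to the excursion decomposition from Lemma~\ref{expectation exit time}, observe that the only new hypothesis is $\sup_{x\in K}\mathrm{E}_x(\tau^\varepsilon(\Gamma_\varkappa))^2\le c\,\varkappa^{2\gamma_1}$, and then pass to the joint limit in $(\varkappa_k,\varepsilon)$ and renormalize exactly as at the end of that proof.

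The genuine difference is in how you get this second-moment bound. The paper imports a second-moment estimate for crossing the layer, $\sup_{x\in\Gamma^i_{\varkappa_0/2}}\mathrm{E}_x(\tau^\varepsilon(\Gamma^i_{\varkappa_0}\cup\Gamma^i_\varkappa))^2\le c_2$ (Lemma 3.7 of \cite{Metastability}). It combines this with the non-degeneracy of $X^\varepsilon_t$ in $D\setminus V_{0,\varkappa_0/2}$, so it is controlling a geometric number of excursions between $\Gamma_{\varkappa_0}$ and $\Gamma_{\varkappa_0/2}$, each with bounded second moment.

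You instead use Kac's inequality $\mathrm{E}_x\tau^2\le 2\,\mathrm{E}_x\tau\cdot\sup_{x'}\mathrm{E}_{x'}\tau$, which needs only first moments the paper already has:
\begin{itemize}
\item on $K$, the estimate (\ref{estimate expectation time from kappa0 to kappa});
\item on $V_{\varkappa,\varkappa_0}$, Lemma~\ref{perturbed expectation time} with outer layer $\Gamma_{\varkappa_0}$, followed by a restart from $\Gamma_{\varkappa_0}\subset K$.
\end{itemize}
For the second item, a path started in $V_{\varkappa,\varkappa_0}$ must cross $\Gamma_\varkappa$ before reaching $S$. Hence $\tau^\varepsilon(\Gamma_\varkappa\cup\Gamma_{\varkappa_0})\le\tau^\varepsilon(S\cup\Gamma_{\varkappa_0})$, which gives the bound $b(1+\ln(\varkappa_0/\varkappa))=o(\varkappa^{\gamma_1})$.

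Your route is more self-contained, since it avoids the external second-moment lemma. It works because the expected hitting time of $\Gamma_\varkappa$ is of the same order $\varkappa^{\gamma_1}$ uniformly over every state visited before that time, so taking the supremum in Kac's inequality loses nothing. The paper's route is shorter given the cited lemma, and it uses the same tool it relies on again for the second moments of $t^{\varkappa,\varepsilon}_n$ in Lemma~\ref{square of the exit time}.

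One small caveat concerns your sentence on unbounded $D$. There the supremum of $\mathrm{E}_{x'}\tau^\varepsilon(\Gamma_\varkappa)$ over all of $D\setminus V_{0,\varkappa}$ need not be finite; for example, if $v_0$ is bounded, expected return times grow with the distance from the origin. So Kac's bound has to be applied in one of two ways:
\begin{itemize}
\item after decomposing the path at returns to a large ball; or
\item in the form $\mathrm{E}_x\tau^2=2\,\mathrm{E}_x\int_0^\tau \mathrm{E}_{X^\varepsilon_s}\tau\,ds$, together with a Lyapunov-function estimate coming from Assumption (e).
\end{itemize}
This is at the same level of detail that the paper itself omits for unbounded domains.
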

\noindent
\textit{Sketch of the Proof}.
We will estimate the distribution of $T_i^\varepsilon$ using $(ii)$ in Lemma \ref{abstract game}. The quantities $\mathcal{F}_n^{\varkappa,\varepsilon},\mathcal{G}_n^{\varkappa,\varepsilon},I_n^{\varkappa,\varepsilon},J_n^{\varkappa,\varepsilon},s_n^{\varkappa,\varepsilon},t_n^{\varkappa,\varepsilon}$ are defined in the same way as in Lemma \ref{expectation exit time}. Let us check the additional assumption for $s^{\varkappa,\varepsilon}_n=\sigma_n^{\varkappa,\varepsilon}-\tau_{n-1}^{\varkappa,\varepsilon}$ in $(ii)$ in Lemma \ref{abstract game}. To check this assumption, by the definition of $s_n^{\varkappa,\varepsilon}$ and $\xi(\varkappa)$, it is sufficient to show that there is $c_1>0$ such that
\begin{equation}
\label{variance bound}
\sup_{x\in K}\mathrm{E}_{x}(\tau^\varepsilon(\Gamma_\varkappa))^2\leq c_1 \varkappa^{2\gamma_1},
\end{equation}
for all $\varkappa$ sufficiently small, for all $\varepsilon$ sufficiently small (depending on $\varkappa$).  By Lemma 3.7 in \cite{Metastability}, there is $c_2>0$ such that
$$
\sup_{x\in \Gamma_{\varkappa_0/2}^i}\mathrm{E}_x (\tau^\varepsilon(\Gamma_{\varkappa_0}^i\bigcup \Gamma_\varkappa^i))^2 \leq c_2, \quad i\in M,
$$
 for each $\varkappa$ sufficiently small, for all $\varepsilon$ sufficiently small (depending on $\varkappa$). Now (\ref{variance bound}) follows from the non-degeneracy of $X_t^\varepsilon$ in $D\setminus V_{0,\varkappa_0/2}$. The desired result now follows from (\ref{variance bound}) and the estimates obtained in the proof of Lemma \ref{expectation exit time}, using arguments similar to those  in  the proof of Lemma \ref{expectation exit time}.
\qed
\begin{lemma}
\label{square of the exit time}
There is $C>0$ such that
$$
\mathrm{E}_x(T_i^\varepsilon)^2\leq C (\mathrm{E}_x T_i^\varepsilon)^2,\quad i\in M,
$$
uniformly in $x\in K$, for all $\varepsilon$ sufficiently small.
\end{lemma}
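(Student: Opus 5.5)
We will use the same excursion decomposition as in the proof of Lemma \ref{expectation exit time}, but now we need to control the second moment of the random sum $\sum_{n=1}^{N^\varepsilon}(s_n^{\varkappa,\varepsilon}+t_n^{\varkappa,\varepsilon})$ on the event $E_i$. Fix $\varkappa=\varkappa_k$ from the subsequence constructed there and $0<\varepsilon\le\varepsilon_0(\varkappa)$. By Lemma \ref{expectation exit time}, $(\mathrm{E}_xT_i^\varepsilon)^2\asymp (\xi(\varkappa)/p(\varkappa,\varepsilon))^2$. It therefore suffices to show
\[
\mathrm{E}_x\Bigl[\Bigl(\sum_{n=1}^{N^\varepsilon}(s_n+t_n)\Bigr)^2\chi_{E_i}\Bigr]\le C\,\mathrm{P}_x(E_i)\Bigl(\frac{\xi(\varkappa)}{p(\varkappa,\varepsilon)}\Bigr)^2 .
\]
Here $\mathrm{P}_x(E_i)\asymp q_ip_i/p$ uniformly in $x$, by Theorem \ref{proportion exit probability}.

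First, I would collect the one-excursion moment bounds. The bound $\mathrm{E}(s_{n+1}^2\mid\mathcal F_n)\le c\,\xi^2$ is (\ref{variance bound}). For $t_{n+1}$ on non-exiting excursions, Lemma 3.7 in \cite{Metastability} together with Lemma \ref{perturbed expectation time} should give $\mathrm{E}(t_{n+1}^2\mid\mathcal G_n)\le c(1+|\ln\varepsilon|^2)$, which is $o(\xi^2)$. For the exiting excursion, a refinement of (\ref{last step exit time}) is needed: on $\{J_{n+1}=i\}$, show $\mathrm{E}(t_{n+1}^2\chi_{\{I_{n+1}=0\}}\mid\mathcal G_n)\le c\,p_i\,|\ln\varepsilon|^2$. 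I would obtain this in the same way as (\ref{last step exit time}), by writing $w(x)=\mathrm{E}_x[\tau^2\chi]$ as $2\mathrm{E}_x\int_0^{\tau}u(X_s^\varepsilon)\,ds$ with $u$ the function used there, and reusing the super-solution $\varepsilon^{-\hat\gamma}\hat\varphi z^{\hat\gamma}$.

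Next, I would expand the square of the sum. Condition on $N^\varepsilon=N$ and $J_N=i$, and write $\sigma_n=s_n+t_n$. The square is $\sum_n\sigma_n^2+2\sum_{n<n'}\sigma_n\sigma_{n'}$. Using the tower property over the filtrations $\mathcal F_n\subseteq\mathcal G_n$, each cross term $\mathrm{E}[\sigma_n\sigma_{n'}\chi_{\{N^\varepsilon=n'\}\cap E_i}]$ is bounded by $(1+o(1))$ times $\mathrm{E}[\sigma_n\chi_{\{N^\varepsilon>n\}}]\cdot\xi(1-p)^{n'-n-1}q_ip_i$. The same structure applies to the $n'=N$ term, using the exiting-excursion bound above. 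Summing the resulting geometric series with ratio $1-p(1+o(1))$ yields $\sum_N N^2(1-p)^{N-1}q_ip_i\,\xi^2\asymp q_ip_i\,\xi^2/p^3$. Dividing by $\mathrm{P}_x(E_i)\asymp q_ip_i/p$ gives exactly $\xi^2/p^2$. The diagonal terms contribute only $O(q_ip_i\xi^2/p^2)$, which is smaller.

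The main obstacle will be making the conditioning uniform. The one-step estimates hold only asymptotically and uniformly over starting points, and the conditioning on $E_i$ biases the earlier excursions. This is handled by the uniform comparisons $\mathrm{P}(J_{n+1}=i\mid\mathcal F_n)\le 2q_i$ and $\mathrm{P}(I_{n+1}=0\mid\mathcal G_n)\le 2p_i$ on $\{J_{n+1}=i\}$, so that only upper bounds are needed and the constant $C$ absorbs all factors of $1+o(1)$. The delicate point is the logarithmic second moment of the final exiting excursion. It is negligible because $p_i|\ln\varepsilon|^2\cdot p/(q_ip_i)\ll\xi^2/p^2$, which follows from $p\ll q_i$ and from the polynomial size of $\xi/p$ in $\varepsilon$.
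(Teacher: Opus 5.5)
Your plan follows the paper's argument. The paper makes the same reduction: it bounds $\mathrm{E}_x[(\tau^\varepsilon(\partial D))^2\chi_{E_i}]$ from above and uses the lower bounds on $\mathrm{E}_xT_i^\varepsilon$ (Lemma \ref{expectation exit time}) and $\mathrm{P}_x(E_i)$ (Theorem \ref{proportion exit probability}). It uses the same excursion decomposition with $\varkappa$ fixed, the bound (\ref{variance bound}) for $s_n$, and a separate second-moment bound for the exiting excursion. You additionally carry out the geometric-sum bookkeeping that the paper leaves as ``not difficult''.

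One estimate, however, is wrong as written. With $\varkappa$ fixed, $\xi(\varkappa)$ is a constant, so $c(1+|\ln\varepsilon|^2)$ is not $o(\xi^2)$. If that were your only control on the non-exiting $t_n$, then $\mathrm{E}(s_n+t_n\mid\cdot)$ would only be $O(\xi+|\ln\varepsilon|)$. Your cross terms would then give $(\xi+|\ln\varepsilon|)^2/p^2$ instead of $\xi^2/p^2$, and the constant $C$ would not be uniform in $\varepsilon$.

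The fix is already in the lemma you cite. Because of the minimum in Lemma \ref{perturbed expectation time}, a start on $\Gamma^i_\varkappa$ gives $\mathrm{E}\,t_n^{\varkappa,\varepsilon}\le b(1+\ln(\varkappa_0/\varkappa))$, which is independent of $\varepsilon$. Together with Lemma 3.7 of \cite{Metastability}, this yields $\mathrm{E}_x(t_n^{\varkappa,\varepsilon})^2\le c_1$ uniformly in $\varepsilon$, which is exactly what the paper uses.

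A smaller point concerns the exiting excursion. Reusing the super-solution $\varepsilon^{-\hat\gamma}\hat\varphi z^{\hat\gamma}$ naturally gives a bound of order $\varkappa^{\hat\gamma}\varepsilon^{-\hat\gamma}$, not $p_i|\ln\varepsilon|^2$. That weaker bound still suffices if you take $\hat\gamma\in(\gamma_i,\gamma_i-\gamma_1]$ with $\hat\gamma<0$; such $\hat\gamma$ exist because $\gamma_1<0$. For fixed $\varkappa$ the bound is then $O(p_i/p)$. This is precisely the paper's sufficient condition $\mathrm{E}((t_n^{\varkappa,\varepsilon})^2\chi_{\{I_n^{\varkappa,\varepsilon}=0\}}\mid\mathcal{G}_n^{\varkappa,\varepsilon})\le c_2\,p_i/p$ on $\{J_{n+1}^\varepsilon=i\}$.
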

\noindent
\textit{Sketch of the Proof}. We have an estimate on $\mathrm{E}_x T^\varepsilon_i$ from below by Lemma~\ref{expectation exit time}. We also have an estimate from below on the probability of the event $E_i$ from Lemma \ref{proportion exit probability}. Thus, it remains to estimate $\mathrm{E}_x ((\tau^{\varepsilon}(\partial D))^2\chi_{\{E_i\}})$  from above. Let us fix some $\varkappa$ sufficiently small so that the estimates on $I_n^{\varkappa,\varepsilon},J_n^{\varkappa,\varepsilon},s_n^{\varkappa,\varepsilon}$ from Lemma \ref{expectation exit time} and \ref{exponential exit time} hold for all $\varepsilon$ sufficiently small. Note that for each fixed $\varkappa$ sufficiently small, by Lemma 3.7 in \cite{Metastability} and by the definition of $t_n^{\varkappa,\varepsilon}$, there is $c_1>0$ such that $$
\mathrm{E}_x(t_n^{\varkappa,\varepsilon})^2\leq c_1,\quad x\in K,\, n\geq 1,
$$
for all $\varepsilon$ sufficiently small. Using these estimates and representing the time until the exit from $D$ as a sum of contributions from different excursions between $K$ and $\Gamma_\varkappa$ and back, it is not difficult to see that it is sufficient to show that for each fixed $\varkappa$ sufficiently small, there is $c_2>0$ such that
$$
\mathrm{E}_x((t_n^{\varkappa,\varepsilon})^2\chi_{\{I_n^{\varkappa,\varepsilon}=0\}}(\cdot)|\mathcal{G}_n^{\varkappa,\varepsilon}) \leq c_2 p_i(\varepsilon,\varkappa)/p(\varepsilon,\varkappa),\,\,\text{on}\,\,\{J_{n+1}^\varepsilon = i\},\quad x\in K, n\geq 1, i\in M,
$$
for all $\varepsilon$ sufficiently small. This can be done using arguments similar to those used in the proof of (\ref{last step exit time assumption}).
\qed

\section{Transition probabilities and exit times starting from the surfaces}
\label{Exit from surfaces}
In this section, we describe the behavior of the process $X_t^\varepsilon$, assuming that it starts on a surface $S$. (We temporarily drop the subscript $i$ from the notation, since we are talking about a single surface here.)  Note that the leading term of each of the coefficients of $L$ is symmetric near~$S$. Let us introduce the necessary notation. Similarly to the notation in Section~\ref{procbeh}, in a small two-sided neighborhood of $S$, each point $x$ can be uniquely identified with the pair $(y,z)$, where $y$ is the nearest point on the surface and $z$ is the signed distance to $S$, i.e., $z={\rm dist}(x,S)$ on one side of $S$ (this side is chosen a priori) and $z=-{\rm dist}(x,S)$ on the other side of $S$. For each $\varkappa>0$ sufficiently small, let $\Gamma_{-\varkappa}:=\{(y,z)|\varphi(y)^{\frac{1}{\gamma}}z=-\varkappa\}$ and $V_{-\varkappa_1,\varkappa_2}:= \{(y,z)|-\varkappa_1\leq \varphi(y)^{\frac{1}{\gamma}}z\leq \varkappa_2\},\, \varkappa_1,\varkappa_2>0$.

\begin{lemma}
\label{equiprobability transition lemma} There is the following limit:
$$
\lim_{\varepsilon \downarrow 0}\mathrm{P}_x(X^{\varepsilon}_{\tau(\Gamma_{\varkappa_0}^i\cup \Gamma_{-\varkappa_0}^i)} \in \Gamma_{\varkappa_0}^i)= \frac{1}{2},
$$
uniformly in $x\in S_i$.
\end{lemma}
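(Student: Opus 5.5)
The plan is to exploit a scaling argument near $S_i$ combined with a symmetry of the limiting (rescaled) process, and then to transport the resulting $1/2$ from the inner layers $\Gamma_{\pm r\varepsilon}$ out to $\Gamma_{\pm\varkappa_0}$.

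\textbf{Step 1 (rescaling).} In the two-sided coordinates $(y,z)$ near $S=S_i$, introduce $\hat z = z/\varepsilon$. The generator $L^\varepsilon = L+\varepsilon^2\widetilde L$ becomes, on the region $|\hat z|\le r$,
\[
L_y + \hat z^2\alpha\,\partial_{\hat z}^2 + \hat z\beta\,\partial_{\hat z} + \hat z\mathcal{D}_y\partial_{\hat z} + \widetilde a(y)\,\partial_{\hat z}^2 + (\text{terms that are } O(\varepsilon)),
\]
where $\widetilde a(y)>0$ is the normal component of the diffusion matrix of $\widetilde L$ on $S$. This uses Lemma~\ref{loccoeff} together with the fact that $\varepsilon^2\widetilde L_z^2$ contributes $\widetilde a\,\partial_{\hat z}^2$ plus lower-order terms. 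By the remark after Lemma~\ref{loccoeff}, the coefficients $\alpha,\beta,\mathcal{D}_y$ do not depend on the side of $S$. Consequently, the limiting operator $\hat L^0$ on $S\times\mathbb{R}$ is invariant under the reflection $\hat z\mapsto-\hat z$, up to the $O(\varepsilon)$ corrections. For the reflection invariance, note that the terms $\hat z^2\partial^2$, $\hat z\partial$ and $\hat z\mathcal{D}_y\partial$ are all even under this map. Since $\hat L^0$ is nondegenerate, the hitting distribution of $\Gamma_r\cup\Gamma_{-r}$ starting from $S$ is close to that of the limiting process. By the symmetry, this limiting hitting distribution gives exactly probability $1/2$ to each side (here $\varphi$ is the same on both sides, so $\Gamma_{\pm r}$ are mirror images). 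Continuous dependence of the solution of the Dirichlet problem on the coefficients on the compact set $V_{-r,r}$ (in rescaled variables), via Lemma~\ref{ellipticity estimation}, then gives
\[
\mathrm{P}_x\bigl(X^\varepsilon \text{ hits } \Gamma_{r\varepsilon} \text{ before } \Gamma_{-r\varepsilon}\bigr)\to \tfrac12
\]
uniformly in $x\in S$, for each fixed $r$.

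\textbf{Step 2 (from $\Gamma_{\pm r\varepsilon}$ to $\Gamma_{\pm\varkappa_0}$).} Starting from $\Gamma_{r\varepsilon}$, the process goes to $\Gamma_{\varkappa_0}$ before returning to $S$ with probability $1-O(r^{\gamma})$. This follows from Lemma~\ref{transition probability to surface}, whose bound $\rho(1+\eta)(\zeta/\varepsilon)^\gamma$ with $\zeta=r\varepsilon$ gives a return probability of order $r^{\gamma}$, small for large $r$ since $\gamma<0$. (Lemma~\ref{perturbed transition probability} can be used if $\Gamma_{r\varepsilon}$ must first be connected to intermediate layers.) The same holds symmetrically on the negative side. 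If the process does return to $S$, the strong Markov property restarts the argument. Hence the target probability $P(x)$ satisfies
\[
P(x) = \tfrac12(1-\theta_1) + \theta\,\mathrm{E}\,P(\cdot) + o(1),
\]
with return probabilities $\theta,\theta_1\le Cr^{\gamma}$, where the returns from the two sides occur with equal asymptotic weights. It follows that $\sup_{x\in S}|P(x)-\tfrac12| \le Cr^\gamma + o_\varepsilon(1)$. Letting $\varepsilon\downarrow 0$ and then $r\to\infty$ completes the proof.

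\textbf{Main obstacle.} The hardest part is Step 1: justifying that the $O(\varepsilon)$ remainder terms, including the $y$-dependence and the $\hat z^3$-type terms from $R$ on $|\hat z|\le r$, vanish uniformly, so that the rescaled exit distribution converges to that of the symmetric limit. The coefficients are only bounded by $r$-dependent constants, but they are uniformly elliptic on the fixed rescaled domain. I would therefore invoke the stability clause at the end of Lemma~\ref{ellipticity estimation}, applied to the difference of the two harmonic functions, which satisfies an equation whose right-hand side is $O(\varepsilon)$ times the derivatives of the limit solution, bounded via (v).
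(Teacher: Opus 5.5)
Your proposal is correct and is essentially the paper's proof. The paper also splits at $\Gamma_{\pm R\varepsilon}$, uses Lemma~\ref{transition probability to surface} to make a return to $S$ from $\Gamma_{\pm R\varepsilon}$ negligible for large $R$, and in the variables $z/\varepsilon$ uses the side-independence of the leading coefficients together with an a priori estimate. The only cosmetic difference is that it compares $u^\varepsilon$ directly with its mirror image $u^\varepsilon_-$ (same boundary data, and $u^\varepsilon+u^\varepsilon_-=1$ on $S$), rather than with the exactly symmetric limit operator $\hat L^0$.

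One minor correction: the $O(\varepsilon)$ remainder contains second-order terms, so the stability step needs $C^2$ bounds on the limit solution up to the boundary (global Schauder estimates). Part (v) of Lemma~\ref{ellipticity estimation} is only an interior gradient bound and does not provide this.
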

\begin{proof}
Note that we have
$$
V_{-\varkappa_0,\varkappa_0}^i= V_{-\varkappa_0, -R\varepsilon}^i \bigcup V_{-R\varepsilon, R \varepsilon}^i\bigcup V_{R\varepsilon,\varkappa_0}^i,
$$
for each $R>0$ and all $\varepsilon$ sufficiently small (depending on $R$).

Similarly to (\ref{second part transition}), for each $\delta>0$, we have
$$
\sup_{x\in \Gamma_{R\varepsilon}^i} \mathrm{P}_x (X_{\tau(S_i\bigcup \Gamma_{\varkappa_0}^i)}^\varepsilon\in S_i)<\delta,\quad \sup_{y\in \Gamma_{-R\varepsilon}^i} \mathrm{P}_y (X_{\tau( \Gamma_{-\varkappa_0}^i\bigcup S_i)}^\varepsilon\in S_i)<\delta,
$$
for all $R$ sufficiently large, for all $\varepsilon$ sufficiently small (depending on $R$). Thus, it is sufficient to show that for each $R$ fixed,
$$
\lim_{\varepsilon\downarrow 0} \mathrm{P}_x (X_{\tau(\Gamma_{-R\varepsilon}^i\bigcup \Gamma_{R\varepsilon}^i)}^\varepsilon\in \Gamma_{R\varepsilon}^i) = \frac{1}{2},\quad x\in S_i.
$$
Let $u^\varepsilon(y,z) := \mathrm{P}_{(y,\varepsilon z)}(X_{\tau(\Gamma_{-R\varepsilon}^i\bigcup \Gamma_{R\varepsilon}^i)}^\varepsilon\in \Gamma_{R\varepsilon}^i)$ and $u^\varepsilon_-(y,z)=\mathrm{P}_{(y,-\varepsilon z)}(X_{\tau(\Gamma_{-R\varepsilon}^i\bigcup \Gamma_{R\varepsilon}^i)}^\varepsilon \in \Gamma_{-R\varepsilon}^i)$ for $(y,z)\in V_{-R,R}^i$. Note that $u^\varepsilon$ and $u^\varepsilon_-$ satisfy elliptic equations in $V^i_{-R,R}$ with the same boundary conditions, i.e., $u^\varepsilon|_{\Gamma_R^i} = u^\varepsilon_-|_{\Gamma_R^i} =1$ and $u^\varepsilon|_{\Gamma_{-R}^i} = u^\varepsilon_-|_{\Gamma_{-R}^i} =0$. The coefficients in the equations for $u^\varepsilon$ and $u^\varepsilon_-$ are asymptotically close. Therefore, by a priori estimate for elliptic equations,  
$$
\lim_{\varepsilon\downarrow 0}\|u^\varepsilon-u^\varepsilon_-\|_{L^\infty(S_i)} = 0,
$$
which completes the proof.
\end{proof}

For two adjacent domains $D_i$ and $D_j$ with the common boundary $S$ and the respective compact sets $K_i$ and $K_j$, let $\tau_{ij}^\varepsilon := \inf \{t>0| X_t^{\varepsilon}\in K_i\bigcup K_j\}.$
\begin{lemma}
\label{time to decide the destination}
We have the following estimates:
$$
\mathrm{E}_x(\tau_{ij}^\varepsilon)=O(|\ln (\varepsilon)|),\quad \mathrm{E}_x((\tau_{ij}^\varepsilon)^2) = O(|\ln (\varepsilon)|^2),
$$
uniformly for $x\in S$ as $\varepsilon\downarrow 0$.
\end{lemma}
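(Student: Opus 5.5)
We decompose the path from $x\in S$ into three stages and bound the first and second moments of each. Stage one is the exit from the thin layer $V_{-R\varepsilon,R\varepsilon}$. Stage two is the transit through $V_{R\varepsilon,\varkappa_0}\cup V_{-\varkappa_0,-R\varepsilon}$, ending either on $\Gamma_{\pm\varkappa_0}$ or on a return to $S$. Stage three is the passage from $\Gamma_{\pm\varkappa_0}$ to $K_i\cup K_j$. Here $R$ is a fixed large number and $\varkappa_0$ is fixed as in Section \ref{Probability and expectation time for transition in one domain}.

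For stage one, rescale $z\to z/\varepsilon$. In the rescaled variables on $V_{-R,R}$, the operator $\varepsilon^{-2}\cdot$(generator restricted to this layer) is not the right object. Instead, note that $L^\varepsilon$ written in $(y,z/\varepsilon)$ has $y$-part $L_y+O(\varepsilon)$ and normal part $z^2\alpha\partial_z^2+\widetilde a(y)\partial_z^2+O(\varepsilon)$, with $\widetilde a>0$ by Assumption (d). This operator is uniformly elliptic on $V_{-R,R}$ with coefficients bounded uniformly in $\varepsilon$. By standard elliptic estimates (the same reasoning as Lemma \ref{ellipticity estimation}(ii) with upper bounds, or a barrier $w=A-z^2$), $\mathrm{E}_x\tau^\varepsilon(\Gamma_{\pm R\varepsilon})\le C$ and $\mathrm{E}_x(\tau^\varepsilon(\Gamma_{\pm R\varepsilon}))^2\le C$, uniformly in $x\in V_{-R\varepsilon,R\varepsilon}$.

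For stage two, Lemma \ref{perturbed expectation time}, applied on each side with starting point on $\Gamma_{\pm R\varepsilon}$, gives an expected duration of at most $b(1+\ln(\varkappa_0/\varepsilon))=O(|\ln\varepsilon|)$. Lemma \ref{transition probability to surface} (with $s_1=R$) shows that the probability of returning to $S$ before reaching $\Gamma_{\pm\varkappa_0}$ is at most $\rho(1+\eta)R^{\gamma}<1/2$ once $R$ is large. Hence the number of cycles "$S\to\Gamma_{\pm R\varepsilon}\to S$" before reaching $\Gamma_{\pm\varkappa_0}$ is dominated by a geometric random variable with parameter bounded away from zero. Summing over cycles with the strong Markov property gives $\sup_{x\in S}\mathrm{E}_x\tau^\varepsilon(\Gamma_{\varkappa_0}\cup\Gamma_{-\varkappa_0})=O(|\ln\varepsilon|)$.

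For stage three, starting from $\Gamma_{\pm\varkappa_0}$, the process is non-degenerate up to $K_i\cup K_j$. With $\varkappa_0$ small, the escape probability back to $S$ is small, and returns to $S$ restart the whole scheme. Each return costs another $O(|\ln\varepsilon|)$ but occurs with probability bounded away from one. Stage three itself has $O(1)$ expectation by non-degeneracy and Lemma \ref{perturbed expectation time}. Combining these with a renewal (geometric series) argument yields $\mathrm{E}_x\tau_{ij}^\varepsilon=O(|\ln\varepsilon|)$.

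The second moment is the main obstacle, because Lemma \ref{perturbed expectation time} controls only first moments. I would first try to get the stage-two second moment from the standard iteration: if $\sup_y\mathrm{E}_y\tau\le A$ over all starting points in the region, then $\sup_y\mathrm{E}_y\tau^2\le 2A^2$. This follows by writing $\mathrm{E}\tau^2=2\mathrm{E}\int_0^\tau \mathrm{E}_{X_s}\tau\,ds$ via the Markov property. The bound of Lemma \ref{perturbed expectation time} is uniform over $x\in V_{0,\varkappa_0}$ as $O(|\ln\varepsilon|)$, since $\min(\ln(\varkappa/\varepsilon),\cdot)\le \ln(\varkappa/\varepsilon)$. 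So the iteration applies to each side, with the stopping set $S\cup\Gamma_{\varkappa_0}$, and gives $O(|\ln\varepsilon|^2)$. More simply, the same identity applies directly to $\tau_{ij}^\varepsilon$ itself. Once $\sup_{x\in V}\mathrm{E}_x\tau_{ij}^\varepsilon=O(|\ln\varepsilon|)$ is established uniformly over the whole region between $K_i$ and $K_j$ (not just on $S$), we get $\mathrm{E}_x(\tau_{ij}^\varepsilon)^2\le 2(\sup\mathrm{E}\tau_{ij}^\varepsilon)^2=O(|\ln\varepsilon|^2)$. This uniformity in the starting point is what I would verify carefully. It holds by rerunning the first-moment argument from an arbitrary point, using Lemma \ref{perturbed expectation time} on either side and stage one inside the thin layer.
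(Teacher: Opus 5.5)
Your proposal is correct and follows essentially the paper's route: bounded first and second moments for the exit from the rescaled, uniformly elliptic layer $V_{-R\varepsilon,R\varepsilon}$; Lemma~\ref{perturbed expectation time} from $\Gamma_{\pm R\varepsilon}$; Lemma~\ref{transition probability to surface} to make the number of returns to $S$ geometric; and the strong Markov property. Two small differences: your stage three is vacuous, since $\Gamma_{\pm\varkappa_0}\subset K_i\cup K_j$ by the definition $K=\mathrm{cl}(D\setminus V_{0,\varkappa_0})$, and your second-moment bound $\mathrm{E}_x(\tau_{ij}^\varepsilon)^2\le 2\bigl(\sup_y\mathrm{E}_y\tau_{ij}^\varepsilon\bigr)\mathrm{E}_x\tau_{ij}^\varepsilon$, with the first-moment bound taken uniformly over starting points in $V_{-\varkappa_0,\varkappa_0}$, is a clean, self-contained replacement for the paper's appeal to the proof of Lemma~3.7 in \cite{Metastability}.
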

\noindent
\textit{Sketch of the Proof}. Note that for each fixed $R>0$, after the change of variables $z\to z/\varepsilon$, the operator $L+\varepsilon^2 \widetilde L$ becomes uniformly elliptic in $V_{-R,R}$ for all $\varepsilon$ sufficiently small. Thus, for each fixed $R>0$, the first and second moments of the exit time $\tau^\varepsilon(\Gamma_{-R\varepsilon}\bigcup \Gamma_{R\varepsilon}),\, x\in S$ are uniformly bounded, for all $\varepsilon$ sufficiently small.

By Lemma \ref{perturbed expectation time}, for each fixed $R>0$ sufficiently large, we have
$$
\mathrm{E}_x\tau^\varepsilon(S\bigcup\Gamma_{\pm \varkappa_0})=O(|\ln(\varepsilon)|),\quad x\in \Gamma_{\pm R\varepsilon},
$$
for all $\varepsilon$ sufficiently small. Following the proof of Lemma 3.7 in \cite{Metastability}, for each fixed $R>0$ sufficiently large, we have
$$
\mathrm{E}_x((\tau^\varepsilon(S\bigcup \Gamma_{\pm\varkappa_0}))^2) = O(|\ln(\varepsilon)|^2),\quad x\in \Gamma_{\pm R\varepsilon},
$$
for all $\varepsilon$ sufficiently small. Note that $\mathrm{P}_x(X_{\tau(S\bigcup \Gamma_{\pm\varkappa_0})}\in \Gamma_{\pm\varkappa_0}),\, x\in\Gamma_{\pm R\varepsilon}$, is estimated by Lemma \ref{transition probability to surface}. For a fixed $R$ sufficiently large,  consider excursions from $S$ to $\Gamma_{-R\varepsilon}\bigcup \Gamma_{R\varepsilon}$ and back, and the desired result follows by the strong Markov property.
\qed

\section{Metastable behavior of the  diffusion process}
\label{vague version main result}
In this section, we prove Theorem \ref{vague theorem of limit invariant measure of Xt}, which can be viewed as a slight refinement of Theorem~\ref{main theorem}, except the statement that the metastable distributions do not depend on the perturbation $\tilde{L}$. The latter statement will be proved in the next section. 

We begin by fixing some notation. Without loss of generality, let the critical values (see Lemma \ref{lemmaspectral}) be $\gamma_{m+1}:=-\infty<\gamma_m\leq \gamma_{m-1}\leq ...\leq \gamma_1<0=:\gamma_0$. For a domain $D_i$, let $M_i$ be the set of indices $j$ such that $D_j$ is adjacent to $D_i$, where $|M_i|= m_i$. Assuming that $\overline{D}_i \bigcap \overline{D}_j$ is non-empty,  let $S_{ij}$ be the unique surface that serves as a part of the boundary of both domains, and let $\gamma_{ij}$ be the critical value associated to $S_{ij}$. Let $\gamma_i^*:= \max_{j\in M_i}\gamma_{ij}$ and $l_i:= \{j\in M_i|\gamma_{ij}=\gamma_i^*\}$. 

The adjacency relation between the domains $D_i$ defines the tree structure on $G:= \bigcup_{j=1}^{m+1}K_j$: each domain $D_i$ (or, equivalently, each compact set $K_i\subset D_i$) corresponds to a vertex, and two vertices are connected by an edge if there is a surface $S_{ij}$ that serves as a part of the boundary for both $D_i$ and $D_j$. For two disjoint subtrees $G_1,\, G_2\subseteq G$ (in particular, two compact sets $K_i,\, K_j$), we write $G_1\sim G_2$ if there is an edge in $G$ connecting $G_1$ and $G_2$. Recall that for two vertices $K_i\sim K_j$, the edge in $G$ connecting $K_i$ and $K_j$ has a critical value $\gamma_{ij}$ associated with it.

To state the next theorem, we define the following subtrees in $G$. Let $G^{\gamma}(i)$ be the subtree containing $K_i$ obtained by retaining the edges whose critical values are larger than $\gamma$; moreover, we only define $G^\gamma(i)$ for  $\gamma = \max \{\gamma_{jk}| K_j\in G^{\gamma}(i), K_k\notin G^{\gamma}(i), K_j\sim K_k\}$, i.e., $\gamma$ is the largest critical value among the boundary edges of $G^{\gamma}(i)$. Let $G^{-\infty}(i):= G$. Note that $G^{\gamma_i^*}(i)$ is the vertex $K_i$ (the compact set). 

Let $\Xi(i)$ be the set that contains $0$, $-\infty$, and all the critical values $\gamma$ such that $G^{\gamma}(i)$ is well-defined. We enumerate $\Xi(i)$ in decreasing order as $\gamma^i_{n(i)}<...<\gamma^i_0$, where $\gamma^i_0= 0$ and $\gamma^i_{n(i)}=-\infty$.

The paper \cite{koralov2024metastable} shows that parameter-dependent semi-Markov processes exhibit meta-stable behavior, provided that certain conditions are met. Similarly, in the following theorem, we claim that the process $X_{t}^\varepsilon$ exhibits metastable behaviors for different time scales $t(\varepsilon)$ (functions of $\varepsilon$) with the starting point $X^\varepsilon_0 \in G$.
\begin{theorem}
\label{vague theorem of limit invariant measure of Xt}
There are non-negative constants $c_j(i,n)$ such that for all time scales $t(\varepsilon)$ such that $\varepsilon^{\gamma_{n-1}^i}\ll t(\varepsilon)\ll \varepsilon^{\gamma_{n}^i},\, 1\leq n \leq n(i)$, we have
$$
\lim_{\varepsilon\downarrow 0}\mathrm{Law}_x(X_{t(\varepsilon)}^\varepsilon) = \sum_{j=1}^{m+1} c_j(i,n)\mu_j,
$$
uniformly in $x\in G^{\gamma_n^i}(i)$, where $\mu_j$ are the invariant measures of the unperturbed process $X_t$ in $D_j$ described in Theorem \ref{invariant measure multi-surfaces}. Moreover, $c_j(i,n)>0$ if and only if $K_j\in G^{\gamma_n^i}(i)$.

The limit of the invariant measure for the process $X^\varepsilon_t$ coincides with the metastable distribution at the largest time scale, i.e.,
$$
\lim_{\varepsilon\downarrow 0}\lim_{t\uparrow \infty} \mathrm{Law}_x (X_t^\varepsilon) = \sum_{j=1}^{m+1} c_j \mu_j,\quad x\in G,
$$
where the coefficients $c_j = c_j(i,n(i))>0$ do not depend on $i$.
\end{theorem}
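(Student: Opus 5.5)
The plan is to reduce the diffusion $X^\varepsilon_t$ to a parameter-dependent semi-Markov process on the finite tree $G$ and then invoke the abstract metastability results of \cite{koralov2024metastable}. The states of the semi-Markov process are the compact sets $K_1,\dots,K_{m+1}$ (vertices of $G$). Define stopping times as follows: starting in $K_i$, wait until $X^\varepsilon_t$ hits $\partial D_i$. Then wait until it reaches $K_i\cup K_j$, where $S_{ij}$ is the surface hit. The embedded chain records which compact set is reached.

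The transition probabilities come from two earlier results. Theorem \ref{proportion exit probability} gives the probability of hitting $S_{ij}$, asymptotically $\frac{C_{ij}\rho_{ij}}{\sum_{k\in l_i}C_{ik}\rho_{ik}}\varepsilon^{\gamma_i^*-\gamma_{ij}}$. Lemma \ref{equiprobability transition lemma} gives the probability $1/2$ of subsequently going to either side, noting that $\rho_{ij}$ is the same on both sides by the Remark after Theorem \ref{proportion exit probability}. Hence $P(K_i\to K_j)\sim \frac12 C_{ij}\rho_{ij}\varepsilon^{\gamma_i^*-\gamma_{ij}}/\sum_{k\in l_i}C_{ik}\rho_{ik}$ for $j\ne i$, and the rest of the mass stays at $K_i$.

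The holding times are handled by Lemmas \ref{expectation exit time}, \ref{exponential exit time} and \ref{square of the exit time}. Together they show that the time spent before leaving $D_i$, conditioned on the exit surface, is asymptotically exponential with mean $\sim C^{(i)}\varepsilon^{\gamma_i^*}/\sum_{k\in l_i}C_{ik}\rho_{ik}$, independent of the exit surface, with a second-moment bound. Lemma \ref{time to decide the destination} shows the crossing time is $O(|\ln\varepsilon|)$, which is negligible. These are exactly the hypotheses (asymptotic transition rates given by powers of $\varepsilon$, exponential-type holding times with controlled second moments) of the metastability theorem in \cite{koralov2024metastable}.

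That theorem yields, for each time scale $\varepsilon^{\gamma^i_{n-1}}\ll t(\varepsilon)\ll\varepsilon^{\gamma^i_n}$, a limiting distribution of the semi-Markov state. This distribution is concentrated exactly on the vertices of $G^{\gamma_n^i}(i)$: the "cycles" of the hierarchy are the subtrees obtained by deleting edges with $\gamma_{jk}\le\gamma$, since the effective rate across edge $jk$ scales as $\varepsilon^{-\gamma_{jk}}$. The limiting weights are determined by the rescaled rates and mean holding times, and are positive on the subtree. The tree structure makes the invariant measures of the restricted chains explicit and strictly positive there.

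It remains to pass from "which $K_j$ the process is associated with" to the actual law of $X^\varepsilon_{t(\varepsilon)}$. Within a visit to $D_j$, the time $t(\varepsilon)$ falls at a random, asymptotically uniform position in a long excursion. Since $t(\varepsilon)\gg 1$ and the visit lasts $\gg1$, we use the ergodicity of $X_t$ in $D_j$ and the closeness of $X^\varepsilon$ to $X$ on long finite intervals away from $\partial D_j$. The main obstacle is proving that the law at such a time is close to $\mu_j$, which needs uniform convergence of $\mathrm{Law}_x(X_s)$ to $\mu_j$ for $x\in K_j$ as $s\to\infty$. It also needs an estimate that the perturbed process spends negligible time near the surfaces within a visit; here Lemma \ref{perturbed expectation time} and the tail of $\mu_j$ from Theorem \ref{invariant measure multi-surfaces} control this. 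I would handle it by splitting the excursion from $K_j$ into cycles via $\Gamma_\varkappa$, as in Lemma \ref{expectation exit time}, and applying a renewal-type argument (the Khasminskii representation) to identify the occupation measure with $\mu_j$ up to errors vanishing as $\varkappa\downarrow0$.

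For the invariant measure statement, I would apply the same Khasminskii formula to the semi-Markov chain on $G$. The invariant measure of $X^\varepsilon$ equals the normalized sum over $j$ of (stationary weight of $K_j$) $\times$ (expected occupation measure per visit). The latter converges to $\mu_j$ times the mean holding time. This gives $\sum_j c_j\mu_j$ with the $c_j$ coinciding with the largest-time-scale coefficients, independent of $i$.
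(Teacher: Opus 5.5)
Your reduction to a semi-Markov process on the tree $G$, and the identification of the clusters with the subtrees $G^{\gamma}(i)$, match the paper. One small difference: your chain has self-loops of asymptotic probability $\frac12$. The paper avoids them by letting $\xi^\varepsilon_n$ be the entrance time into a \emph{different} $K_j$, which only rescales the mean holding time. The gap is in the step you yourself call the main obstacle: passing from the limit of $\mathrm{P}_x(Y^\varepsilon_{t(\varepsilon)}\in K_j)$ to $\mathrm{Law}_x(X^\varepsilon_{t(\varepsilon)})$.

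\textbf{The gap.} What you propose there does not yield a statement about the law at the deterministic time $t(\varepsilon)$.
\begin{itemize}
\item The Khasminskii representation identifies the normalized expected occupation measure of a visit to $D_j$ with $\mu_j$. That is a statement about time averages.
\item Your bridge to a fixed time is that $t(\varepsilon)$ falls at an ``asymptotically uniform position'' in the visit. This is a key-renewal-type claim, for cycles that regenerate only approximately and depend on $\varepsilon$. You do not justify it, and it is harder than what the theorem actually needs.
\item You cannot instead transport the mixing of $X_t$ along the whole visit. Closeness of $X^\varepsilon$ to $X$ holds only on time intervals of fixed length as $\varepsilon\downarrow 0$, whereas a visit lasts $\sim\varepsilon^{\gamma_j^*}$.
\end{itemize}

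\textbf{The missing idea.} You need a stopping time, lying within a bounded distance before $t(\varepsilon)$, at which $X^\varepsilon$ is in $\bigcup_j K_j$. The paper obtains it as follows.
\begin{itemize}
\item Consider excursions between $\overline{\Gamma}_{\varkappa_0}$ and $\overline{\Gamma}_{\varkappa_0/2}$. They satisfy uniform-in-$\varepsilon$ bounds $c\le \mathrm{E}_x\overline{\tau}^\varepsilon_1$ and $\mathrm{Var}\,\overline{\tau}^\varepsilon_1\le C$. This is where the near-surface estimates, including Lemma \ref{time to decide the destination}, really enter.
\item By Lemma 7.3 of \cite{koralov2024metastable}, these bounds give some $\overline{\tau}^\varepsilon_k\in[s(\varepsilon),s(\varepsilon)+T_1]$ with probability at least $1-\delta$, for $T_1$ large.
\item Take $s(\varepsilon)=t(\varepsilon)-T_1-T_2$. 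The semi-Markov result then determines which $K_j$ contains $X^\varepsilon_{\overline{\tau}^\varepsilon}$, since no transition occurs in a bounded window.
\item The strong Markov property reduces everything to $\sup_{s\in[T_2,T_2+T_1]}|\mathrm{E}_xf(X^\varepsilon_s)-\int f\,d\mu_j|<\delta$ for $x\in K_j$. This follows from the uniform convergence of $\mathrm{Law}_x(X_s)$ to $\mu_j$ together with closeness of $X^\varepsilon$ and $X$ on finite time intervals.
\end{itemize}
No identification of occupation measures is needed for the first assertion.

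\textbf{The invariant-measure assertion.} Here your Khasminskii route is a legitimate alternative. The quantity is stationary, so the occupation-measure identification is exactly what is needed, and it does not depend on the fixed-time issue. You would still have to do two things:
\begin{itemize}
\item show that the limiting normalized stationary weights of the semi-Markov chain coincide with the top-level coefficients $c_j(i,n(i))$ produced by the cluster hierarchy;
\item control tightness via Assumption (e).
\end{itemize}
The paper gets both for free by deducing the second assertion from the first. It takes $t(\varepsilon)\gg\varepsilon^{\gamma_m}$, applies the Markov property at time $t-t(\varepsilon)$, and uses tightness.
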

By Theorem \ref{vague theorem of limit invariant measure of Xt}, the proof of Theorem \ref{main theorem} reduces to showing that $c_j(i,n)$ can be determined solely by the coefficients of $L$. This will be established in Lemma \ref{explicit result of limiting measure for process Yt on compact sets} in Section \ref{Appendix}, whose proof is based on the special tree structure in our model.

Now, let us prove Theorem \ref{vague theorem of limit invariant measure of Xt}. First, we define a semi-Markov process $Y_t^\varepsilon$ constructed from the original process $X^\varepsilon_t$. The process $Y^\varepsilon_t$ will be shown to satisfy the assumptions in \cite{koralov2024metastable}, and so the metastability results in \cite{koralov2024metastable} apply.
\begin{definition}
\label{semi Markov process Yt}
Define a semi-Markov process $Y_t^\varepsilon$ with the state space $G$ as follows: $\xi^\varepsilon_0 = 0$, $Y^\varepsilon_0 = X^\varepsilon_0\in G$, and, for $n \geq 1$, given that $X_{\xi_{n-1}^\varepsilon}^\varepsilon \in K_i$,
$$
\xi_n^\varepsilon := \underset{t}{\inf}\{t> \xi_{n-1}^\varepsilon | X_{t}^\varepsilon\in K_j,\, j\neq i\},\quad Y_t^\varepsilon= X_{\xi_{n-1}^\varepsilon}^\varepsilon,\quad t\in [\xi_{n-1}^\varepsilon,\xi_n^\varepsilon).
$$
\end{definition}
Let us recall the list of properties of a semi-Markov process that ensure that it exhibits meta-stable behavior (according to the results in Section 4 in \cite{koralov2024metastable} that concerns the situation when the transition times depend very weakly on the target set). As applied to our situation, the properties concern the transition times and the transition probabilities
between the sets $K_i$. Informally, for two adjacent domains $D_i$ and $D_j$, by the strong Markov property, the transition from $K_i$ to $K_j$ can be studied by examining the transition from $K_i$ to $S_{ij}$ and then from $S_{ij}$ to $K_j$. The time and probability for the first part of the transition have been studied in Section \ref{Probability and expectation time for transition in one domain}, and for the second part they have been studied in Section \ref{Exit from surfaces}.

1) For two adjacent domains $D_i$ and $D_j$, by Lemma \ref{proportion exit probability} and Lemma \ref{equiprobability transition lemma}, there are $C_{ij},\, \rho_{ij}>0$ such that
\begin{equation}
\label{proportion transition probability for semi markov process}
\mathrm{P}_x(Y_{\xi_1^\varepsilon}^\varepsilon\in K_j)\sim \frac{C_{ij}\rho_{ij}}{\sum_{k \in l_i}C_{ik}\rho_{ik}}\varepsilon^{-\gamma_{i j}+\gamma_{i}^*},\quad x\in K_i,\, j\in M_i,\,\quad {\rm as}\,\, \varepsilon\downarrow 0.
\end{equation}
Moreover, the numbers $C_{ij}$ can be determined in terms of the coefficients of $L$ in $D_i$ and the numbers $\rho_{ij}$ can be determined in terms of the coefficients of $L,\widetilde L$ in an arbitrarily small neighborhood of $S_{ij}$.

2) Let $\xi_1^\varepsilon(j)$ be the random time $\xi_1^\varepsilon$ conditioned on the event $\{Y_{\xi_1^\varepsilon}^\varepsilon\in K_j\}$. By Lemma~\ref{expectation exit time}, Lemma \ref{equiprobability transition lemma}, and Lemma \ref{time to decide the destination}, there is $C_i>0$ such that 
\begin{equation}
\label{condition expectation exit time for semi markov process}
\mathrm{E}_x(\xi_1^\varepsilon(j)) \sim \frac{C_i}{\sum_{k\in l_i}C_{ik}\rho_{ik}} \varepsilon^{\gamma_{i}^*},\quad x\in K_i,\, j\in M_i,\quad {\rm as}\,\, \varepsilon\downarrow 0,
\end{equation}
where $C_i$ can be determined in terms of the coefficients of $L$ in $D_i$ and $C_{ij},\rho_{ij}$ are the same as in (\ref{proportion transition probability for semi markov process}). 

3) By Lemma \ref{square of the exit time}, Lemma \ref{equiprobability transition lemma}, and Lemma \ref{time to decide the destination}, there is $C'_i>0$ such that
$$
\mathrm{E}_x(\xi_1^\varepsilon(j)^2) \leq C'_i\varepsilon^{2\gamma_{i}^*},\quad x\in K_i,\, j\in M_i,\quad {\rm as}\,\, \varepsilon\downarrow 0.
$$

4) By Lemma \ref{exponential exit time}, Lemma \ref{equiprobability transition lemma}, and Lemma \ref{time to decide the destination}, for each $t\in [0,\infty)$, we have
$$
\lim_{\varepsilon\downarrow 0}\mathrm{P}_x((\frac{C_i}{\sum_{k\in l_i}C_{ik}\rho_{ik}}\varepsilon^{\gamma_{i}^*})^{-1}\cdot \xi_1^\varepsilon(j) \geq t) = e^{-t},\quad x\in K_i,\, j\in M_i.
$$

5) All the transition probabilities and the expectations of transition times (the left hand sides of (\ref{proportion transition probability for semi markov process}) and (\ref{condition expectation exit time for semi markov process})) behave as powers of $\varepsilon$. Therefore, any ratio of two finite products of such quantities has a limit (which may be equal to $+\infty$). This means that the complete asymptotic regularity assumption from \cite{koralov2024metastable} is met.  

Our properties 1)-5) imply that the Assumptions (a)-(f) in Section $4$ in \cite{koralov2024metastable} are satisfied.
Consequently, by Theorem 4.4 in \cite{koralov2024metastable}, there is a finite sequence of time scales such that there is a limit for the distribution of $Y^\varepsilon_{t(\varepsilon)}$, whenever $t(\varepsilon)$ is between these critical time scales. 
Our goal is not simply to prove that the abstract result applies to our situation (which is what we just accomplished), but to specify the critical time scales and the corresponding metastable distributions for $Y_t^\varepsilon$, as stated in the following Lemma.
\begin{lemma}
\label{vague result of limiting measure for process Yt on compact sets}
There are non-negative constants $c_j(i,n)$ such that for all time scales $t(\varepsilon)$ such that $\varepsilon^{\gamma_{n-1}^i}\ll t(\varepsilon)\ll \varepsilon^{\gamma_{n}^i},\, 1\leq n \leq n(i)$, we have
$$
\lim_{\varepsilon\downarrow 0}\mathrm{P}_x(Y_{t(\varepsilon)}^\varepsilon \in  K_j) = c_j(i,n),\quad j\in \{1,...,m+1\},
$$
uniformly in $x\in G^{\gamma_n^i}(i)$ and $c_j(i,n)>0$ if and only if $K_j\in G^{\gamma_n^i}(i)$.
\end{lemma}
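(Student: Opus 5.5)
We will deduce the lemma from the abstract metastability theory of \cite{koralov2024metastable} for semi-Markov processes, using the hierarchical (Freidlin-type) cycle structure. We already know from properties 1)--5) that Theorem 4.4 of \cite{koralov2024metastable} applies to $Y^\varepsilon_t$. What remains is to identify the critical time scales with the numbers $\gamma^i_n$ and to identify the support of the limits with the subtrees $G^{\gamma^i_n}(i)$. The plan is to run the cycle-construction algorithm of \cite{koralov2024metastable} explicitly on the tree $G$, proceeding by induction on the critical values in decreasing order.

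\textbf{Base level.} The cycles are the single vertices $K_i$. By 2) and 4), the exit time from $K_i$ is asymptotically exponential with mean of order $\varepsilon^{\gamma_i^*}$. By 1), the exit goes with probability tending to one to $K_j$ with $j\in l_i$, the others being suppressed by the factor $\varepsilon^{\gamma_i^*-\gamma_{ij}}\to 0$. Hence for $t(\varepsilon)\ll\varepsilon^{\gamma_i^*}=\varepsilon^{\gamma^i_1}$ the process has not left $K_i$, so $c_j(i,1)=\delta_{ij}$. This also matches $G^{\gamma_i^*}(i)=\{K_i\}$.

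\textbf{Inductive step.} Suppose $G^{\gamma}(i)$ is a cluster whose internal edges all have critical values $>\gamma$. I claim the following.
\begin{itemize}
\item Within it, $Y^\varepsilon$ mixes on time scales $\ll\varepsilon^{\gamma}$. The reason is that each internal edge is crossed in time $O(\varepsilon^{\gamma_{jk}})\ll\varepsilon^\gamma$, and the associated embedded chain restricted to the cluster has transition probabilities with definite limits.
\item The exit from the cluster occurs on the time scale $\varepsilon^{\gamma}$. Indeed, leaving from the vertex $K_j$ across an edge of value $\gamma$ takes time of order $\varepsilon^{\gamma}$. This uses that $\varepsilon^{\gamma_j^*}\cdot\varepsilon^{\gamma_j^*-\gamma}\cdot\varepsilon^{-\gamma_j^*}\cdot\ldots$ reduces, via the ratio of 1) and 2), to a rate $\varepsilon^{-\gamma}$, since the mean sojourn times are of order $\varepsilon^{\gamma_j^*}$ and the exit probabilities are of order $\varepsilon^{\gamma_j^*-\gamma}$.
\end{itemize}
The key computation is that for a vertex $K_j$ the rate of jumping to a neighbour $K_k$ is
\[
\mathrm{P}(\text{jump to }k)/\mathrm{E}\xi_1\asymp \varepsilon^{\gamma_j^*-\gamma_{jk}}\varepsilon^{-\gamma_j^*}=\varepsilon^{-\gamma_{jk}}.
\]
So the rate depends only on the edge and not on the vertex. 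This makes the tree ordering by edge values exactly the cycle hierarchy.

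\textbf{Conclusion of the argument.} With this observation, the time scales at which the metastable state started in $K_i$ changes are precisely the successive values $\gamma^i_n$ at which $G^{\gamma}(i)$ grows. Between these scales the limit exists by Theorem 4.4 of \cite{koralov2024metastable}. The limit is the quasi-stationary distribution of the cluster $G^{\gamma^i_n}(i)$, which gives positive mass exactly to its vertices. This holds because the limiting rates along internal edges are positive and the tree is connected, so the limiting generator restricted to the cluster is irreducible. The uniformity in $x\in G^{\gamma^i_n}(i)$ follows because any starting point in the cluster reaches equilibrium within it on a scale $\ll\varepsilon^{\gamma^i_{n-1}}$, with an error that is uniform by the uniformity in 1)--4).

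\textbf{Main obstacle.} The hard step is handling ties and nested exits. Several boundary edges of a cluster may have equal values, and a cluster may exit into a neighbouring cluster that has itself already merged at a higher level. In that case the limiting measure is not just a local quasi-stationary measure, and the exit from the merged cluster must still be checked to occur on the scale $\varepsilon^{\gamma}$. I would handle this first by showing, inductively, that the merged cluster's mean exit time is $\asymp\varepsilon^{\gamma}$. The tool is the renewal identity of Lemma~\ref{abstract game}(i), applied to visits of the cluster, together with the edge-rate formula above. This verifies the ``weak dependence on target'' hypothesis of Section 4 of \cite{koralov2024metastable} at every level.
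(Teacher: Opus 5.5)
Your overall strategy is sound: quote Theorem 4.4 of \cite{koralov2024metastable}, then identify the cluster hierarchy through the edge rate $r_{jk}=\mathrm{P}(\text{jump to }k)/\mathrm{E}\xi_1\asymp\varepsilon^{-\gamma_{jk}}$. The gap is in the inductive step. Both of its load-bearing claims need something you do not prove. The first claim is that $G^{\gamma}(i)$ is left on the scale $\varepsilon^{\gamma}$ because a boundary edge of value $\gamma$ at $K_j$ is crossed at rate $\asymp\varepsilon^{-\gamma}$. The second is that the limit charges every vertex of the cluster. Both require that each vertex carry an order-one fraction of the time spent in the cluster. If some vertex had a vanishing occupation fraction, a boundary edge at that vertex would only be crossed on a longer scale, and the vertex would get zero limiting mass.

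The reason you give ("the limiting rates along internal edges are positive, so the limiting generator restricted to the cluster is irreducible") does not deliver this when internal edges carry different critical values. In any time normalization some of the rates $r_{jk}\asymp\varepsilon^{-\gamma_{jk}}$ tend to $0$ or to $\infty$, so there is no single limiting generator. The limiting embedded chain on single vertices need not be irreducible either, since from $K_j$ only the edges in $l_j$ survive. For instance, take a leaf $K_l$ attached to $K_j$ by an edge of value $-2$, where $-1$ is the largest value at $K_j$. Then $K_l$ is entered with probability $\asymp\varepsilon$ per sojourn in $K_j$. It gets positive mass only because its own sojourns last $\asymp\varepsilon^{-2}$, a compensation your argument does not capture.

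The missing ingredient is reversibility. A nearest-neighbour chain on a tree satisfies detailed balance, so the time-stationary weights of $Y^\varepsilon$ restricted to the cluster satisfy $\pi_j r_{jk}=\pi_k r_{kj}$. By 1) and 2), $r_{kj}/r_{jk}$ converges to a constant in $(0,\infty)$; this is the constant $c_1$ the paper uses in Section~\ref{Appendix}. Multiplying these ratios along paths gives weights with positive limits, which yields both the exit scale $\varepsilon^{\gamma}$ and the positivity.

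With this repair, your flat edge-rate description is a genuinely different and rather transparent route. The paper instead works hierarchically. It splits $G^{\gamma}(i)$ at its smallest internal value $\gamma'$ into subclusters $G^{\gamma'}(j_s)$, all of which have the same exit scale $\varepsilon^{\gamma'}$. The induced chain on these subclusters has positive limiting transition probabilities in both directions, so the weights $\lambda^{\gamma'}\theta^{\gamma',\varepsilon}$ are automatically of the same order. It then recurses into the subclusters.

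Two smaller corrections. First, your "main obstacle" does not arise. If the cluster exits through a $\gamma$-edge into $K_k$, the component of edges $>\gamma$ containing $K_k$ has that edge on its boundary and all boundary values $\le\gamma$. It therefore has the same exit scale, and the two components simply merge. Moreover, the hypotheses of Section 4 of \cite{koralov2024metastable} concern only the elementary transitions 1)--5), not every level. Second, equilibration inside $G^{\gamma^i_n}(i)$ takes the scale $\varepsilon^{\gamma^i_{n-1}}$ of its slowest internal edges. That scale is $\ll t(\varepsilon)$, but it is not itself $\ll\varepsilon^{\gamma^i_{n-1}}$ as you wrote.
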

\noindent
{\it Sketch of the proof.}
In \cite{koralov2024metastable}, the metastable distributions are identified in terms of clusters (defined in Section 6 in \cite{koralov2024metastable}). In our situation, the clusters are the subtrees $G^{\gamma}(i)$  defined above Theorem \ref{vague theorem of limit invariant measure of Xt}. We will prove Lemma \ref{vague result of limiting measure for process Yt on compact sets} by studying the transition times and transition probabilities between clusters $G^{\gamma}(i)$. Note that  we have already studied transition times and transition probabilities for elementary clusters of the form $K_i=G^{\gamma_i^*}(i),\, i\in \{1,...,m+1\}$ (see properties 1)-5)). Now, we will briefly explain how these results generalize to all  clusters.

Consider a cluster $G^{\gamma}(i)$. Let $\Theta^{\gamma,\varepsilon}(i)$ be the first exit time from $G^{\gamma}(i)$ for the process $Y_t^\varepsilon$. For a set $K_j\sim G^{\gamma}(i)$, due to the tree structure, there is a unique set $K_k\in G^{\gamma}(i)$ such that $K_k\sim K_j$. By property 1) and by induction on the size of clusters, we can easily check that there is $c>0$ (this constant depends on $j$ and on the cluster) such that 
\begin{equation}
\label{transition probability between clusters}
\mathrm{P}_x(Y_{\Theta^{\gamma,\varepsilon}(i)}^\varepsilon\in K_j) \sim c \varepsilon^{-\gamma_{jk}+\gamma},\quad x\in G^{\gamma}(i),\quad {\rm as}\,\, \varepsilon\downarrow 0.
\end{equation}
We define $p_{ij}^{\gamma,\varepsilon}:=  c \varepsilon^{-\gamma_{jk}+\gamma}$ and $q_{ij}^{\gamma} := \lim_{\varepsilon \downarrow 0} p_{ij}^{\gamma,\varepsilon}$. Similarly, by property 2) and by induction on the size of clusters, there is $c'>0$ (this constant depends on $j$ and on the cluster) such that
\begin{equation}
\label{exit time from clusters}
\mathrm{E}_x(\Theta^{\gamma,\varepsilon}(i)) \sim c' \varepsilon^{\gamma},\quad x\in G^{\gamma}(i),\quad {\rm as}\,\, \varepsilon\downarrow 0.
\end{equation}
We define $\theta^{\gamma,\varepsilon}(i) = c' \varepsilon^{\gamma}$. 

Recall from \cite{koralov2024metastable} 
how to determine the critical time scales and metastable distributions. The non-zero critical time scales for a given initial point $x\in K_i$ coincide with the set of the expected exit times for the clusters containing this point. Thus, by (\ref{exit time from clusters}), each critical time scale is either zero, infinity, or $\varepsilon^{\gamma^i_n}$ with $1\leq n \leq n(i)$, as defined above Theorem \ref{vague theorem of limit invariant measure of Xt}.

To determine the metastable distributions, let us focus on a cluster $G^{\gamma}(i)$ that contains more than one set, and consider the following partition of $G^\gamma(i)$. Let $\gamma'$ be the smallest critical value among the edges whose endpoints both lie in $G^{\gamma}(i)$. There are clusters $\{G^{\gamma'}(j_1),...,G^{\gamma'}(j_r)\}$ with $j_1,...,j_r\in \{1,...,m+1\}$ such that they are pairwise disjoint, and $G^{\gamma}(i) = \bigcup_{s=1}^r G^{\gamma'}(j_s)$. In particular, $\{G^{\gamma'}(i_1),\dots,G^{\gamma'}(i_r)\}$ forms a partition of $G^{\gamma}(i)$. 

Consider a finite-state Markov chain whose states are identified with the clusters $G^{\gamma'}(j_s),\, s\in\{1,...,r\}$. The Markov chain only makes transitions between adjacent states: from a state $G^{\gamma'}(j_s)$ it can jump in one step only to states $G^{\gamma'}(j_u)$ such that $G^{\gamma'}(j_s)\sim G^{\gamma'}(j_u)$, and never stays in the same state in one step. The transition probability from $G^{\gamma'}(j_s)$ to $G^{\gamma'}(j_u)$ is defined as $q_{j_s a}^{\gamma'}$, where $K_a\in G^{\gamma'}(j_u)$ is the unique set such that $K_a\sim G^{\gamma'}(j_u)$ and $q_{j_s a}^{\gamma'}$ is introduced after (\ref{transition probability between clusters}). This Markov chain has a unique stationary distribution. Let $\lambda^{\gamma'}(j_s)>0$ be the stationary probabilities of $G^{\gamma'}(j_s),\, s\in\{1,...,r\}$.

Given a time scale $t(\varepsilon)$ such that $\varepsilon^{\gamma'}\ll t(\varepsilon)\ll \varepsilon^{\gamma}$ with the starting point $x\in G^{\gamma}(i)$, the metastable distribution is supported on the cluster $G^{\gamma}(i)$ and the distribution on each cluster $G^{\gamma'}(j_s)$ can be determined by the stationary probabilities $\lambda^{\gamma'}$ and the expected exit times $\theta^{\gamma',\varepsilon}$, i.e.,  
\begin{equation}
\label{limit measure ratio of clusters}
\mathrm{P}_x(Y_{t(\varepsilon)}^\varepsilon \in G^{\gamma'}(j_s))\sim \lambda^{\gamma'}(j_s)\theta^{\gamma',\varepsilon}(j_s)/\sum_{u=1}^r \lambda^{\gamma'}(j_u)\theta^{\gamma',\varepsilon}(j_u),\quad x\in G^{\gamma}(i),\quad {\rm as}\,\, \varepsilon\downarrow 0.
\end{equation}
This formula explains the metastable distribution of $Y_t^\varepsilon$ in each cluster $G^{\gamma'}(j_s)$. In Section~8 in \cite{koralov2024metastable}, it was shown that a similar construction can be used to determine the metastable distribution between the subclusters of $G^{\gamma'}(j_s)$, conditioned on $\{Y_{t(\varepsilon)}^\varepsilon \in G^{\gamma'}(j_s)\}$ as $\varepsilon\downarrow 0$. More precisely, suppose that the cluster $G^{\gamma'}(j_s)$ has a partition $\{G^{\gamma''}(k_1),...,G^{\gamma''}(k_v)\}$ with $k_1,...,k_v\in \{1,...,m+1\}$. Similarly to (\ref{limit measure ratio of clusters}), the metastable distribution of $Y_t^\varepsilon$ satisfies
$$
\mathrm{P}_x(Y_{t(\varepsilon)}^\varepsilon \in G^{\gamma''}(k_l)|Y_{t(\varepsilon)}^\varepsilon \in G^{\gamma'}(j_s))\sim \lambda^{\gamma''}(k_l)\theta^{\gamma'',\varepsilon}(k_l)/\sum_{u=1}^v \lambda^{\gamma''}(k_u)\theta^{\gamma'',\varepsilon}(k_u),
$$
uniformly in $x\in G^{\gamma}(i)$, as $\varepsilon\downarrow 0$. The same formula applied to the subclusters of $G^{\gamma''}(k_l)$, and so on. Therefore, using (\ref{limit measure ratio of clusters}) repeatedly, we can determine the 
limit of the probability $\mathrm{P}_x(Y^\varepsilon_{t(\varepsilon)} \in K_j)$
for each  $K_j \in G^\gamma(i)$.
This completes the proof of  Lemma~\ref{vague result of limiting measure for process Yt on compact sets}. 
\qed
\\

Now, we will prove Theorem \ref{vague theorem of limit invariant measure of Xt} with the aid of Lemma \ref{vague result of limiting measure for process Yt on compact sets}.
\\
\\
{\it Proof of  Theorem \ref{vague theorem of limit invariant measure of Xt}.}
Recall that $Y_t^\varepsilon$ is constructed from $X_t^\varepsilon$, and $\xi_n^\varepsilon$ are the renewal times for the process $Y_t^\varepsilon$, as defined in Definition \ref{semi Markov process Yt}. Let $\xi_-(s):= \sup \{\xi_n^\varepsilon|\xi_n^\varepsilon<s\}$. Recall that  $t(\varepsilon)$ is such that $\varepsilon^{\gamma_{n-1}^i}\ll t(\varepsilon)\ll \varepsilon^{\gamma_{n}^i},\, 1\leq n \leq n(i)$, where the critical value $\gamma_n^i$ is defined before Theorem \ref{vague theorem of limit invariant measure of Xt}. By Lemma \ref{vague result of limiting measure for process Yt on compact sets}, we have
\begin{equation}
\label{limiting invariant measure}
\lim_{\varepsilon\downarrow 0} \mathrm{P}_x(X_{\xi_-(t(\varepsilon))}^\varepsilon\in K_j) = c_j(i,n),\quad x\in G^{\gamma_n^i}(i), \quad j\in \{1,...,m+1\},
\end{equation}
where $c_j(i,n)>0$ if and only if $K_j\in G^{\gamma_n^i}(i)$.

Now, we can complete the proof of the first assertion in Theorem \ref{vague theorem of limit invariant measure of Xt} similarly  to the proof of Theorem 5.7 in \cite{Metastability}, but we include an argument here to make our paper more self-contained. Our goal is to show that, for each fixed $f\in C_b(\mathrm{R}^d)$,
\begin{equation}
\label{test function result of limiting measure Xt}
\lim_{\varepsilon\downarrow 0}|\mathrm{E}_x f(X^\varepsilon_{t(\varepsilon)}) -\sum_{j=1}^{m+1}c_j(i,n)\int_{D_j} f(x)\mu_j(dx)|=0,\quad x\in G^{\gamma_n^i}(i),
\end{equation}
 which will imply Theorem \ref{vague theorem of limit invariant measure of Xt}. Recall that in Section \ref{Probability and expectation time for transition in one domain}, for the process $X_t^\varepsilon$ in a certain domain $D_i$, we have studied the excursions from $\Gamma_{\varkappa_0}$ to $\Gamma_{\varkappa_0/2}$ and back. Similarly, we now consider the excursions from $\overline{\Gamma}_{\varkappa_0}$ to $\overline{\Gamma}_{\varkappa_0/2}$ and back, where $\overline{\Gamma}_{\varkappa_0}$ denotes the union of the  layer $\Gamma_{\varkappa_0}$ in all domains. Let us define the corresponding random times $\overline{\sigma}_k^{\varepsilon},\overline{\tau}_k^{\varepsilon}$ for such excursions:
$$
\overline{\sigma}_0^{\varepsilon}=\overline{\tau}_0^{\varepsilon}= 0,\quad \overline{\sigma}_k^{\varepsilon} = \inf\{t>\overline{\tau}_{k-1}^{\varepsilon}| X_{t}^\varepsilon\in \overline{\Gamma}_{\varkappa_0/2}\},\, \overline{\tau}_{k}^\varepsilon= \inf\{t>\overline{\sigma}_{k}^\varepsilon|X_t^\varepsilon\in \overline{\Gamma}_{\varkappa_0}\},\quad k\geq 1.
$$
We have estimates on the expectations and variances of the excursion times from Section~\ref{Probability and expectation time for transition in one domain}: there exist $c, C>0$ such that $c \leq \mathrm{E}_x \overline{\tau}^\varepsilon_1,~{\rm Var} \overline{\tau}^\varepsilon_1 \leq C,$ $x \in \overline{\Gamma}_{\varkappa_0}$ for all $\varepsilon$ sufficiently small. Such estimates imply that, with probability close to one, the process starting in a compact set will have a stopping time inside any sufficiently large time interval. More precisely, for each $\delta>0$, for each time scale $s(\varepsilon)$, we have
\begin{equation}
\label{T1 restriction}
\mathrm{P}_x(\{\overline{\tau}_k^{\varepsilon}\in [s(\varepsilon), s(\varepsilon)+T_1]\,\, \text{for some}\, k\geq 0\}) \geq 1-\delta,\quad x \in G^{\gamma_n^i}(i),
\end{equation}
for all $T_1$ sufficiently large, for all $\varepsilon$ sufficiently small. For a proof of this statement we refer to Lemma 7.3 in \cite{koralov2024metastable}. 

Recall that for the unperturbed process $X_t$, $\mathrm{Law}_xX_t$ converges to $\mu_j$ uniformly in $x\in K_j$ as $t\to \infty$ (see \cite{freidlin2023perturbations}, \cite{Metastability}, where this result was proved using Theorem 3.7 in \cite{hairer2010convergence}). Thus, for each $\delta>0$, by the proximity of $X_t^\varepsilon$ and $X_t$ on finite time intervals, we can find sufficiently large $T_2$ such that
\begin{equation}
\label{T2 restriction}
\sup_{t\in [T_2, T_2+T_1]}|\mathrm{E}_x f(X_t^\varepsilon) - \int_{D_j} f(x)d\mu_j|<\delta,\quad x\in K_j,\quad j\in \{1,...,m+1\},
\end{equation}
 for all $\varepsilon $ sufficiently small (depending on $T_1,T_2$). 
 
 Let $s(\varepsilon)=t(\varepsilon)-T_1 -T_2$.  Let $\overline{\tau}^\varepsilon$ be the first of the stopping times  $\overline{\tau}_k^{\varepsilon}$ such that $\overline{\tau}_k^{\varepsilon}\in [s(\varepsilon), s(\varepsilon)+T_1]$ (if such a stopping time exists); otherwise, put $\overline{\tau}^\varepsilon = s(\varepsilon) +T_1$ (in order to make it a stopping time).  Then, by  (\ref{limiting invariant measure}) and (\ref{T1 restriction}),
\begin{equation}
\label{excursion constant}
|\mathrm{P}_x(X_{\overline{\tau}^\varepsilon}^\varepsilon\in K_j) - c_j(i,n)| < 2 \delta,\quad x\in G^{\gamma_n^i}(i),\quad j\in \{1,...,m+1\},
\end{equation}
 for all sufficiently small $\varepsilon$. Now we obtain (\ref{test function result of limiting measure Xt}) from (\ref{T2 restriction}) and (\ref{excursion constant}) by using the strong Markov property (stopping the process at time $\overline{\tau}^\varepsilon$ before continuing to $t(\varepsilon)$) and using the fact that $f$ is bounded and $\delta$ can be chosen to be arbitrarily small. This completes the proof of the first assertion of Theorem \ref{vague theorem of limit invariant measure of Xt}.
 
Now, let us prove the second assertion. Note that $\gamma_m =\gamma^i_{n(i)-1}$ for each $i$ and, given a time scale $t(\varepsilon)$ such that $\varepsilon^{\gamma_m}\ll t(\varepsilon)$, the limiting constants $c_j:=c_j(i,n(i))>0$ for each set $K_j$ do not depend on $i$  (by Lemma \ref{vague result of limiting measure for process Yt on compact sets}, since  $
G^{\gamma_{n(i)}^i}(i)$ coincides with the entire tree $G$).
From (\ref{test function result of limiting measure Xt}), for each fixed $f\in C_b(\mathbb{R}^d)$,
\begin{equation}
\label{test function result of the largest time scale}
\lim_{\varepsilon\downarrow 0}|\mathrm{E}_x f(X^\varepsilon_{t(\varepsilon)}) -\sum_{j=1}^{m+1}c_j\int_{D_j} f(x)\mu_j(dx)|=0,
\end{equation}
uniformly in $x\in G$. By Assumption (e), for each $\delta>0$, there is a ball $F\subset \mathbb{R}^d$ with sufficiently large radius such that
$$
\mathrm{P}_x(X_{t-t(\varepsilon)}^\varepsilon\in F)\geq 1-\delta,\quad x\in G,
$$
for all $\varepsilon$ sufficiently small, for all $t$ such that $t\geq t(\varepsilon)$. Moreover, by Assumption (e) and the estimates in Section \ref{Exit from surfaces}, (\ref{test function result of the largest time scale}) holds uniformly in $x\in F$. Thus, using the Markov property (stopping the process $X^\varepsilon_t$ at time $t-t(\varepsilon)$ before continuing to $t$), for each $\delta>0$,
$$
|\mathrm{E}_x f(X^\varepsilon_{t}) -\sum_{j=1}^{m+1}c_j\int_{D_j} f(x)\mu_j(dx)|<2\delta,\quad x\in G,
$$
for all $\varepsilon$ sufficiently small, for all $t$ sufficiently large (depending on $\varepsilon$). Since for each fixed $\varepsilon>0$, the process $X^\varepsilon_t$ is non-degenerate, by Assumption (e), there is a unique invariant measure $\mu^\varepsilon$ such that $\mathrm{Law}_x X^\varepsilon_t$ converges to $\mu^\varepsilon$ as $t\to\infty$, $x\in G$. Thus, we have the desired result by taking $\delta$ arbitrarily small.
\qed
\section{Proof of Theorem \ref{main theorem}}
\label{Appendix}
In this section, we prove the following lemma. Combined with Theorem \ref{vague theorem of limit invariant measure of Xt} in Section \ref{vague version main result}, it completes the proof of Theorem \ref{main theorem}.
\begin{lemma}
\label{explicit result of limiting measure for process Yt on compact sets} 
The constants $c_j(i,n)$ can be determined in terms of the coefficients of $L$.
\end{lemma}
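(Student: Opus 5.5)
The plan is to trace exactly where the perturbation enters the recursive formula for the constants and show that it cancels. The constants $c_j(i,n)$ are built, via (\ref{limit measure ratio of clusters}) applied recursively, from two ingredients. The first is the stationary probabilities $\lambda^{\gamma'}(j_s)$ of the cluster-level Markov chains, which come from the limiting transition probabilities $q^{\gamma'}$. The second is the ratios of the expected exit times $\theta^{\gamma',\varepsilon}(j_s)$. Both are computed inductively from the elementary data (\ref{proportion transition probability for semi markov process}) and (\ref{condition expectation exit time for semi markov process}). There, $C_{ij}$ and $C_i$ depend only on $L$, while $\rho_{ij}$ depends on $L$ and $\widetilde L$. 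The key structural fact is the Remark after Theorem \ref{proportion exit probability}: $\rho_{ij}=\rho_{ji}$, because the same number governs reaching $S_{ij}$ from either side. I would also use Lemma \ref{equiprobability transition lemma}: from the surface, the process exits to each side with probability $1/2$, independently of $\widetilde L$.

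The first step is to write the process $Y^\varepsilon$ at the elementary level as a chain on the tree with the following data. The jump rate from $K_i$ to $K_j$ is
\[
r_{ij}(\varepsilon)=\mathrm{P}(i\to j)/\mathrm{E}\xi_1 \sim \tfrac12 C_{ij}\rho_{ij}\varepsilon^{-\gamma_{ij}}/C_i .
\]
Here the $1/2$ comes from the surface, and the denominators $\sum_{k\in l_i}C_{ik}\rho_{ik}$ cancel. For the limiting metastable distributions of a process with weakly target-dependent, asymptotically exponential holding times, the \cite{koralov2024metastable} construction produces the same weights as the continuous-time Markov chain with these rates. The second step is then to observe that, because the graph is a tree, this chain is reversible. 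Its stationary measure satisfies detailed balance, $w_i r_{ij}=w_j r_{ji}$. Substituting the rates, the factor $\tfrac12\rho_{ij}\varepsilon^{-\gamma_{ij}}$ appears on both sides and cancels. This gives $w_i/C_i\cdot C_{ij}=w_j/C_j\cdot C_{ji}$, so $w_j/w_i=(C_j C_{ij})/(C_i C_{ji})$ along every edge, with no dependence on $\varepsilon$ or $\widetilde L$.

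The third step is to apply the same reasoning inside every cluster $G^{\gamma}(i)$. The restriction of the tree to a cluster is again a tree, so the chain restricted to it (reflected at the exit edges) is reversible with the same edge ratios. I would then argue by induction on the cluster hierarchy that the quantity
\[
\lambda^{\gamma'}(j_s)\theta^{\gamma',\varepsilon}(j_s)\Big/\sum_u \lambda^{\gamma'}(j_u)\theta^{\gamma',\varepsilon}(j_u)
\]
converges to $\sum_{K_a\in G^{\gamma'}(j_s)}w_a\big/\sum_{K_a\in G^{\gamma}(i)}w_a$, i.e. to the limit of the mass the reversible measure assigns to the subcluster. Indeed, $\lambda\theta$ is, up to normalization, the occupation fraction of the subcluster, which for the embedded chain equals the stationary mass. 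This mass involves only products of the ratios $C_jC_{ij}/(C_iC_{ji})$. For clusters whose internal edges all share the same critical value, all powers of $\varepsilon$ cancel. For sub-subclusters with larger critical values, the recursive formula picks out, at each level, the same ratios of $w$'s. Hence $c_j(i,n)=w_j/\sum_{K_a\in G^{\gamma_n^i}(i)}w_a$, which depends only on $L$.

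The main obstacle is justifying that the abstract cluster recursion from \cite{koralov2024metastable} coincides with the reversible stationary masses. This is delicate when the holding times in each $K_i$ have different orders $\varepsilon^{\gamma_i^*}$ and the transition probabilities are degenerate in the limit. I would first handle it by verifying directly that $\lambda^{\gamma'}$, computed from the $q^{\gamma'}$ of (\ref{transition probability between clusters}), together with $\theta^{\gamma'}$ from (\ref{exit time from clusters}), satisfies the cluster-level detailed balance. This reduces to checking, by induction, that the exit probability through edge $(k,j)$ times the inverse exit time equals the $w$-weighted rate across that edge divided by the total weight of the cluster. That identity holds exactly for reversible chains and should pass to the asymptotic level.
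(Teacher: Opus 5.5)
Your proposal is correct and follows the paper's argument. The paper's relation $p_{kl}^{\gamma_1^i,\varepsilon}/\theta^{\gamma_1^i,\varepsilon}(k)\sim c_1\,p_{lk}^{\gamma_1^i,\varepsilon}/\theta^{\gamma_1^i,\varepsilon}(l)$ is exactly your detailed-balance identity for the rates $r_{kl}$, with $\rho_{kl}=\rho_{lk}$ making $c_1$ independent of $\widetilde L$. As in your plan, it is combined with reversibility of the cluster-level chain on the tree and induction over the cluster hierarchy. Your single $\widetilde L$-independent weight vector $w$, giving $c_j(i,n)=w_j/\sum_{K_a\in G^{\gamma_n^i}(i)}w_a$, is a more explicit way to carry out the general-cluster induction that the paper only sketches. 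One small point: the factor $\tfrac12$ in your $r_{ij}$ is already absorbed into the constant $C_i$ of the paper's property 2, but it cancels in detailed balance anyway.
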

\noindent
{\it Sketch of the proof.}
Let us show that Lemma \ref{explicit result of limiting measure for process Yt on compact sets} holds for the following simple case.
Suppose that a cluster $G^\gamma(i)$ is partitioned (as in Lemma~\ref{vague result of limiting measure for process Yt on compact sets}) into individual sets $\{K_{j_1},...,K_{j_r}\}$ with $j_1,...,j_r\in \{1,...,m+1\}$, i.e., the partition breaks $G^{\gamma}(i)$ into single sets rather than clusters containing more than one set. Note that the critical values $\gamma_{j_s}^*,\, s\in \{1,...,r\}$, are the same and equal to $\gamma_1^i$, where $\gamma_1^i$ is defined before Theorem \ref{vague theorem of limit invariant measure of Xt}. Given a time scale $t(\varepsilon)$ such that $\varepsilon^{\gamma_1^i}\ll t(\varepsilon)\ll \varepsilon^\gamma$, let us show that $c_{j_s}(i,1)$, $s \in \{1,...,r\}$,  can be determined in terms of the coefficients of $L$.

Consider two sets $K_k\sim K_l$ in the partition of $G^\gamma(i)$. Recall that $\theta^{\gamma_1^i,\varepsilon}(k)$ is the asymptotic expected exit time of $K_k$ and $p_{kl}^{\gamma_1^i,\varepsilon}$ is the asymptotic transition probability from $K_k$ to $K_l$. By properties 1) and 2), there is $c_1>0$ such that
\begin{equation}
\label{ratio of exit time to probability for compact sets}
p_{kl}^{\gamma_1^i,\varepsilon}/\theta^{\gamma_1^i,\varepsilon}(k) \sim c_1 p_{lk}^{\gamma_1^i,\varepsilon}/\theta^{\gamma_1^i,\varepsilon}(l),\quad {\rm as}\,\, \varepsilon\downarrow 0,
\end{equation}
where $c_1$ can be determined in terms of the coefficients of $L$. Note that (\ref{ratio of exit time to probability for compact sets}) holds for a general pair of sets $K_k\sim K_l$, even if they are not in the partition of a larger cluster.

Recall the finite-state Markov chain introduced in the proof of Lemma \ref{vague result of limiting measure for process Yt on compact sets}, whose states are identified with the sets $\{K_{j_1},...,K_{j_r}\}$. Due to the tree structure in our model, such Markov chain is reversible:
\begin{equation}
\label{reversible markov chain for compact sets}
\lambda^{\gamma_1^i}(k) q^{\gamma_1^i}_{kl} = \lambda^{\gamma_1^i}(l) q^{\gamma_1^i}_{lk},
\end{equation}
where $q^{\gamma_1^i}_{kl} = \lim_{\varepsilon\downarrow 0}p^{\gamma_1^i,\varepsilon}_{kl}$, as defined after (\ref{transition probability between clusters}). 

Recall that in Lemma \ref{vague result of limiting measure for process Yt on compact sets} we have already shown that the metastable distribution is given by (\ref{limit measure ratio of clusters}).   Let us examine the ratio    $\lambda^{\gamma^i_1}(j_s)\theta^{\gamma^i_1,\varepsilon}(j_s)/ \lambda^{\gamma^i_1}(j_u)\theta^{\gamma^i_1,\varepsilon}(j_u) $ (compare with the right-hand side of (\ref{limit measure ratio of clusters})). If the limits of such ratios for all $s, u \in \{1,...r\}$ can be determined in terms of the coefficients of $L$, then   $c_{j_s}(i,1),\, s\in\{1,...,r\}$, can also be determined in terms of $L$. It is enough to check the case when $j_s = k, j_u = l$ with $K_k \sim K_l$.
The latter case follows from (\ref{ratio of exit time to probability for compact sets}) and (\ref{reversible markov chain for compact sets}).

 For a general cluster $G^\gamma(i)$, we can likewise show that the metastable distribution given by (\ref{limit measure ratio of clusters}) can be determined in terms of the coefficients of $L$, by induction on the size of clusters. Similarly to the proof of Lemma \ref{vague result of limiting measure for process Yt on compact sets}, we can finish the proof of Lemma~\ref{explicit result of limiting measure for process Yt on compact sets} by using (\ref{limit measure ratio of clusters}) repeatedly. The detailed proof for the general case is omitted, as it is analogous to the above construction but involves considerably heavier notation.
\qed
\\
\\
{\bf Acknowledgments}. We are grateful to Mark Freidlin for important discussions. 
 Leonid Koralov was supported by the NSF grant DMS-2307377. Both authors were supported by the Simons Foundation Grant MP-TSM-00002743.
\be 
\bibliography{bibfile}
\end{document}